\tikzstyle DDS=[black,->]
\tikzstyle TDDS=[gray,double,->]
\tikzstyle PCA=[black,dotted,->]
\tikzstyle PCAt=[gray,double,dotted,->]
\tikzstyle oSS=[black,dashed,->]
\tikzstyle soSS=[gray,double,dashed,->]
\tikzstyle eqCA=[rounded corners,draw=none,fill=black!8]
\newtheorem{thm}{Theorem}
\newtheorem{lem}{Lemma}
\newtheorem{prop}{Proposition}
\newtheorem{cor}[prop]{Corollary}
\newtheorem{open}{Open question}
\newtheorem{defi}{Definition}
\newtheorem{rem}{Remark}
\newtheorem{conj}{Conjecture}
\theoremstyle{plain}
\newtheorem{exmp}{Example}
\newcommand{\dfn}[1]{\emph{#1}}
\newcommand{\ssl}[1]{#1}
\newcommand{\xpr}[1]{``#1''}
\newcommand{\resp}[1]{\ (resp. #1)}
\newcommand{\ie}{\textit{i.e.}\ }
\newcommand{\soit}[1]{\left|\everymath{\displaystyle\everymath{}}\begin{array}{ll}#1\end{array}\right.}
\newcommand{\appli}[4]{\begin{array}{rcl}#1&\to&#2\\
#3&\mapsto&\displaystyle#4\end{array}}
\newcommand{\appl}[5]{#1:\appli{#2}{#3}{#4}{#5}}
\newcommand{\si}{\textrm{ if }}
\newcommand{\oub}{\textrm{ or }}
\newcommand{\impl}{\Rightarrow}
\newcommand{\imp}[3][:\ ]{\item[\ref{i:#2}$\impl$\ref{i:#3}#1]}
\newcommand\Zset{\ensuremath \mathbb{Z}}
\newcommand\Nset{\ensuremath \mathbb{N}}
\newcommand\Rset{\ensuremath \mathbb{R}}
\newcommand{\Mset}{\ensuremath \mathbb{M}}
\newcommand{\Ns}{\ensuremath \mathbb{N}_+}
\newcommand{\an}{A^\Nset} 
\newcommand{\anm}{A^{-\Nset}}
\newcommand{\az}{A^\Zset}
\newcommand{\am}{A^\Mset}
\newcommand{\trois}{\{0,1,2\}}
\newcommand{\length}[1]{\left|#1\right|} 
\newcommand{\restr}[1]{_{\left|#1\right.}} 
\newcommand{\cl}[1]{\overline{#1}} 
\newcommand{\ball}[2]{\mathcal B_{#1}(#2)} 
\newcommand{\card}[1]{\left|#1\right|} 
\newcommand{\abs}[1]{\left|#1\right|} 
 \newcommand{\spart}[1]{\left\lceil #1\right\rceil}
\newcommand{\dist}{\mathbbm d} 
\newcommand{\lang}{\mathcal L} 
\newcommand{\g}{\mathcal G} 
\newcommand{\sett}[2]{\left\{#1\mid#2\right\}}
\newcommand{\set}[3]{\sett{#1\in#2}{#3}} 
\newcommand{\tnd}[1][n]{\rightarrow_{#1\to\infty}{}} 
\newcommand{\co}[2]{\left\llbracket #1,#2\right\llbracket}
\newcommand{\cc}[2]{\left\llbracket #1,#2\right\rrbracket}
\newcommand{\oo}[2]{\left\rrbracket #1,#2\right\llbracket}
\newcommand{\oc}[2]{\left\rrbracket #1,#2\right\rrbracket}
\newcommand{\ci}[1]{\co{#1}\infty}
\newcommand{\io}[1]{\oo{-\infty}{#1}}
\newcommand{\scc}[2]{_{\cc{#1}{#2}}}
\newcommand{\sco}[2]{_{\co{#1}{#2}}}
\newcommand{\soo}[2]{_{\oo{#1}{#2}}}
\newcommand{\soc}[2]{_{\oc{#1}{#2}}}
\newcommand{\sci}[1]{_{\ci{#1}}}
\newcommand{\sio}[1]{_{\io{#1}}}
\newcommand{\deux}{\mathbbm2}
\newcommand{\compl}[1]{{#1}^C}
\newcommand{\orb}{\mathcal O} 
\newcommand{\Jcal}{\mathcal J}
\newcommand{\pinf}[1]{\vphantom{#1}^\infty{#1}}
\newcommand{\uinf}[1]{#1^\infty}
\newcommand{\dinf}[1]{\vphantom{#1}^\infty{#1}^\infty}
\newcommand{\forb}[1]{\Sigma_{#1}} 
\newcommand{\radi}[1]{\langle #1\rangle} 
\newcommand{\isub}[1]{_{\radi{#1}}}
\newcommand{\iexp}[1]{^{\radi{#1}}}
\DeclareMathOperator*{\Min}{Min}
\DeclareMathOperator*{\id}{id}
\newcommand{\conc}{\oplus} 
\newcommand{\fac}{\sqsubset} 
\newcommand{\nfac}{\not\sqsubset} 
\newcommand{\suba}[1]{_{\begin{array}c#1\end{array}}} %
\newcommand{\start}[1]{\begin{#1}\hfill\nopagebreak[5]\begin{itemize}}
\newcommand{\finish}[1]{\popQED\popQED\end{itemize}\end{#1}}
\begin{document}

\title{Asymptotic behavior of dynamical systems and cellular automata}

\author{Pierre Guillon$^1$ and Gaétan Richard$^2$}


\address{$^1$ Department of Mathematics,
University of Turku,
FI-20014 Turku, Finland}
\address{$^2$ Greyc,
Université de Caen \& CNRS,
boulevard du Maréchal Juin, 14\,000 Caen, France}

\eads{\mailto{pguillon@univ-mlv.fr}, \mailto{grichard@info.unicaen.fr}}

\begin{abstract}
  We study discrete dynamical systems through the topological concepts of limit set, which consists of all points that can be reached arbitrarily late, and asymptotic set, which consists of all adhering values of orbits. In particular, we deal with the case when each of these are a singleton, or when the restriction of the system is periodic on them, and show that this is equivalent to some simple dynamics in the case of subshifts or cellular automata. Moreover, we deal with the stability of these properties with respect to some simulation notions.
\end{abstract}


\maketitle


\section*{Introduction}

Complex systems are made of a great number of entities interacting locally with each other in a fully deterministic way. 
However, in many cases, the global behavior of these systems is very complex and the only known way to understand their dynamics is by simulating them. These systems appear in many different fields such as biology, physics, chemistry, sociology, \ldots

In order to achieve links between the well-known local rule and the long-term dynamical properties of the system, one of the classical points of view is the study of attractors (see~\cite{Cosnard:1985}). To achieve formal results on those kinds of properties, some regular model is needed. Therefore, one often introduces some compact topology and some continuous self-map representing the evolution: discrete-time dynamical systems. Besides, if we require spatial homogeneity, we define the model of cellular automata (see \cite{VonNeumann:1966,Hedlund:1969}), which are composed of an infinite number of cells disposed on a line, and endowed with a state chosen among a finite alphabet.

We focus here on the long-term behavior of such formal systems. It can be first represented by the limit set, which consists of all the configurations that can appear after an arbitrarily long time (see~\cite{Hurd:1987,Culik:1989}). A more restrictive notion is the asymptotic set, composed of all the configurations close to which the system is passing infinitely often (see~\cite{acc,nilpeng}).

In this paper, we shall develop a selective review on newly achieved properties linking local behavior and properties of the limit or asymptotic set for dynamical systems in general, cellular automata or subshifts. The paper is divided as follows: after giving all necessary definitions about dynamical systems in Section \ref{s:sdd}, we shall first treat the case of limit set (Section \ref{s:ls}) then the case of asymptotic set (Section \ref{s:as}).

\section{Topological dynamics}\label{s:sdd}

We will note $\Ns=\Nset\setminus\{0\}$ and $\deux=\{0,1\}$. 

\subsection{Dynamical systems}

We model complex systems by discrete (topological) dynamical systems. Even though ``real life'' time is continuous and making it discrete can introduce artifacts, it can be observed that this restriction already exhibits a very complex behavior.
\begin{defi}\label{d:sdd}
A \dfn{discrete dynamical system} (DDS) is a pair $(X,F)$ (or simply $F$ when there is no confusion), where $X$ is a nonempty \ssl{compact} metric space and $F:X\to X$ a \ssl{continuous} function. 
\end{defi}

Let us denote $\dist$ the distance on $X$ and ${\ball\varepsilon x}=\set yX{\dist(x,y)<\varepsilon}$ the open ball of center $x\in X$ and radius $\varepsilon>0$.

A subset $Y\subseteq X$ is \dfn{$F$-invariant}\resp{\dfn{strongly $F$-invariant}} if $F(Y)\subseteq Y$\resp{$F(Y)=Y$}. Given such a set, we can define the restriction $F\restr Y$ of our dynamical system to it ; $(Y,F\restr Y)$ (or simply $(Y,F)$) is a \dfn{subsystem} of $(X,F)$ if besides $Y$ is \ssl{closed}.
A DDS $(X,F)$ is \dfn{minimal} if it does not contain any strict subsystem, \ie if any closed $F$-invariant $Y\subset X$ is either $\emptyset$ or $X$.

\paragraph{Simulation}

Let us introduce some order on the dynamics produced by these systems: the \ssl{simulation}. Intuitively, one says that one system simulates another one if the latter can be embedded into the former in a continuous way.

\begin{defi}
A \dfn{morphism} of a DDS $(X,F)$ into another $(Y,G)$ is a continuous function $\Phi:X\to Y$ such that $\Phi F=G\Phi$.  If the morphism is \ssl{surjective}, it is a \dfn{factor map}, $(Y,G)$ is called \dfn{factor} of $(X,F)$ and $(X,F)$ \dfn{extension} of $(Y,G)$. If the morphism is \ssl{bijective}, it is a \dfn{conjugacy}; $(X,F)$ and $(Y,G)$ are said \dfn{conjugate}.
\\
A \dfn{simulation} of \dfn{period} $n\in\Ns$ by \dfn{steps} of $n'\in\Ns$ by a DDS $(X,F)$ of another $(Y,G)$ is a \ssl{factor map} of some subsystem $(X',F^n)$ of $(X,F^n)$ into $(Y,G^{n'})$.
\end{defi}
Moreover, the simulation is \dfn{direct} if $n=1$, \dfn{total} if $n'=1$, \dfn{complete} if $X'=X$, \dfn{exact} if the factor map is actually a conjugacy. We say in these cases that $(X,F)$ \dfn{simulates} \dfn{directly}\resp{\dfn{totally}, \dfn{completely}, \dfn{exactly}} $(Y,G)$.


\begin{figure}[!htp]
  \centering
  \begin{tikzpicture}[auto]
    \node (sf) at (0,0) {$X$};
    \node (ef) at (0,-3) {$X$};
    \node (ef1) at (0,-1) {$X$};
    \node (ef2) at (0,-2) {$X$};
    \node (sff) at (2,0) {$X'$};
    \node (sf1) at (2,-1) {$X'$};
    \node (sf2) at (2,-2) {$X'$};
    \node (eff) at (2,-3) {$X'$};
    \node (sg) at (4,0) {$Y$};
    \node (sg1) at (4,-.75) {$Y$};
    \node (sg2) at (4,-1.5) {$Y$};
    \node (sg3) at (4,-2.25) {$Y$};
    \node (eg) at (4,-3) {$Y$};
    \draw[->] (sf) to node {$F$} (ef1);
    \draw[->] (ef2) to node {$F$} (ef);
    \draw[dotted] (ef1) -- (ef2);
    \draw[->] (sff) to node {$F$} (sf1);
    \draw[->] (sf2) to node {$F$} (eff);
    \draw[dotted] (sf1) -- (sf2);
    \draw[->] (sg) to node {$G$} (sg1);
    \draw[->] (sg1) to node {$G$} (sg2);
    \draw[->] (sg3) to node {$G$} (eg);
    \draw[dotted] (sg2) -- (sg3);
    \draw[<<-] (sg) to node {$\Phi$} (sff);
    \draw[<<-] (eg) to node {$\Phi$} (eff);
    \node at (1,0) {$\supseteq$};
    \node at (1,-3) {$\supseteq$};
    \end{tikzpicture}
  \caption{Simulation ($\Phi$ is surjective)}
  \label{fig:simul}
\end{figure}

\paragraph{Configurations}

In the rest of the paper, we shall focus on a specific case of dynamical systems: totally disconnected discrete dynamical systems (TDDS). Intuitively, they correspond to discretizing the space where the interacting basic objects live. From now on, $A$ will be some finite alphabet and $\Mset$ will stand either for $\Nset$ or for $\Zset$. In this context, a \dfn{configuration} of the space is an infinite sequence $x\in\am$ of letters.

The set $\am$ is endowed with the topology induced by the distance $\dist(x,y)= 2^{\min_{x_i\ne y_i}\abs i}$, which corresponds to the product topology of the discrete topology on $A$. From Tychonoff's theorem, the resulting space is compact.
It is known that totally disconnect sets are homeomorphic to subsets of $\am$; hence we will restrict our study of TDDS to the systems $(\Sigma,F)$ on a subspace $\Sigma\subset\am$ of configurations.

If $x\in\am$ is a configuration and $i,k\in\Mset$, we denote $x\sco ik\fac x$ the finite pattern $x_ix_{i+1}\ldots x_{k-1}$. The same notation holds for any kind of interval (including infinite ones). If $u\in A^*$ and $i\in\Mset$, then $[u]_i$ denotes the \dfn{cylinder} $\set x\am{x\sco i{i+\length u}=u}$.  If $k\in\Nset$, we note $\radi k=\set i\Mset{\abs i\le k}$. For instance, $A\iexp k$ denotes the set of patterns of length $k+1$ if $\Mset=\Nset$, of length $2k+1$ (indexed from $-k$ to $k$) if $\Mset=\Zset$.  If $u\in A\iexp k$, then $[u]$ denotes the \dfn{central cylinder} $\set x\am{x\isub k=u}$ of configurations that share $u$ as a central pattern. More generally, if $U\subset A\iexp k$, the $[U]$ denotes the cylinder $\set x\am{x\isub k\in U}$. We can also extend the cylinder notation in the following flavor: $x=\pinf0[u]\uinf0$ stands for the configuration filled entirely with $0$ except for the portion $x\isub k=u$; similarly, $\pinf0[u]_i=\set x{[u]_i}{\forall j<i,x_i=0}$, and so on. 

Given two configurations $x,y\in\az$ and some cell $i\in\Mset$, we define the \dfn{concatenation} $x\conc_iy$ as the configuration $z$ such that $z_k=x_k\si k<i,y_k\si k\ge i$.

If $0 \in A$, we say that a configuration $c \in \am$ is \dfn{$0$-finite} if it is equal to $0$ except for some finite number of elements. One easy remark is that the set of $0$-finite configurations is dense in $\am$. 

\paragraph{Subshifts}

A natural operation on configurations consists in \xpr{translating} it; this operation is called the \dfn{shift} $\sigma : \am \to \am$, defined as $\sigma(x)_i = x_{i+1}$.  Most of the time, we choose to study TDDS which are homogeneous and therefore commute with the shift. 

\begin{defi}
  A \dfn{onesided}\resp{\dfn{twosided}} \dfn{subshift} is a \ssl{closed} \ssl{$\sigma$-invariant}\resp{\ssl{strongly}} subset $\Sigma$ of $\an$\resp{$\az$}.
\end{defi}

Here, we introduce two classical different characterizations using either languages or graphs.

If $L\subset A^*$ is a language, then the set ${\forb L}=\set z\am{\forall u\in L,u\nfac z}$ of configurations avoiding patterns of $L$ is a subshift. Conversely, to any subshift $\Sigma$ can be associated the \dfn{language} ${\lang(\Sigma)}=\set u{A^*}{\exists z\in\Sigma,u\fac z}$ of the finite patterns appearing in some of its configurations. For any length $k\in\Nset$, the \dfn{language of order $k$} of $\Sigma$ is ${\lang_k(\Sigma)}=\lang(\Sigma)\cap A^k$.  A \dfn{forbidden language} of a subshift $\Sigma$ is a language $L\subset A^*$ such that $\Sigma=\forb L$.  A subshift is \dfn{of finite type} (SFT) if it admits some finite forbidden language. It is of \dfn{order $k\in\Nset$} ($k$-SFT) if it admits a forbidden language included in $A^k$.

Let us define a \dfn{graph} on alphabet $A$ as a pair ${\g=(V,E)}$ where $V$ is the finite set of \dfn{vertices}, $E\subset V\times V\times A$ the finite set of \dfn{arcs}; if $(v,w,a)\in E$ then $v$ is the \dfn{initial} vertex of the arc, $w$ its \dfn{terminal} vertex and $a$ its \dfn{label}.
A \dfn{path} is a sequence $(v_j,w_j,a_j)_{j\in I}\in E^\Mset$ of arcs where $I\subset\Mset$ and $v_{j+1}=w_j$ for $j,j+1\in I$. Its \dfn{label} is the sequence $(a_j)_{j\in I}$. A graph is \dfn{strongly connected} if any two vertices $v,w\in V$ belong to a same path. The \dfn{label system} of a graph $\g = (V,E)$ on $A$ is the subshift ${\Gamma_\g}=\sett{(a_j)_{j\in\Mset}}{(v_j,w_j,a_j)_{j\in\Mset}\in\Sigma_\g}$ of the \ssl{labels} of its infinite paths. A subshift is \dfn{sofic} if it is the \ssl{label system} of some graph. If we see the graph as a finite automaton, we can see that a subshift is sofic if and only if its language is regular.
Another equivalence is given by the Weiss' theorem \cite{weiss}: a subshift is sofic if and only if it is the factor of some SFT.

\paragraph{Cellular automata}

We introduce a kind of dynamical system which represent spatially homogeneous dynamics: cellular automata. Formally, a (one-dimensional) \dfn{cellular automaton} (CA) on alphabet $A$ is a triplet $(m,d,f)$ where $m \in \Mset$ is the \dfn{anchor}, $d \in \Nset$ is the \dfn{diameter} and $f: A^d \to A$ the \dfn{local transition rule}. We shall assimilate the cellular automaton with its associated dynamical system $(\am,F)$ defined by $F(x)_i=f(x \sco {i-m}{i-m+d})$ for any $x \in \am$ and any $i\in\Mset$, under the motivation of the following result.
\begin{thm}[Curtis, Hedlund \& Lyndon \cite{Hedlund:1969}]
  \dfn{Cellular automata} are exactly the TDDS $(\am,F)$ such that $F$ commutes with the shift $\sigma$.
\end{thm}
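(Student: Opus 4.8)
The plan is to prove the two inclusions separately, as the equivalence packages together an easy verification and a substantial converse. For the easy direction, I would start from a cellular automaton $(m,d,f)$ and check directly that its associated map $F$, given by $F(x)_i=f(x\sco{i-m}{i-m+d})$, both commutes with $\sigma$ and is continuous. Commutation is a one-line substitution: $F(\sigma(x))_i=f((\sigma x)\sco{i-m}{i-m+d})=f(x\sco{i-m+1}{i-m+d+1})=F(x)_{i+1}=\sigma(F(x))_i$. Continuity follows from \emph{locality}: since $F(x)_i$ reads only the finitely many coordinates of $x$ lying in the window $\co{i-m}{i-m+d}$, agreement of two configurations on a large enough central block forces agreement of their images on a slightly smaller central block, which is exactly continuity for the product topology.

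The substance lies in the converse: given a continuous $F:\am\to\am$ commuting with $\sigma$, I must recover a local rule. First I would localize the computation of the central output symbol. For each $a\in A$, the set $F^{-1}([a])$ is clopen (open since $F$ is continuous and the central cylinder $[a]$ is open, closed as the complement of the union of the finitely many other preimages), and these sets form a finite partition of $\am$. Since $\am$ is compact, each such clopen set is a finite union of central cylinders, so there is a single radius $k\in\Nset$ such that membership in any $F^{-1}([a])$ is decided by the central window $x\isub k$. In other words $F(x)_0$ depends only on $x\isub k$, which lets me define a map $f:A\iexp k\to A$ by $f(u)=F(x)_0$ for any $x$ with $x\isub k=u$.

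The key step — and the place where compactness is indispensable — is precisely this extraction of a uniform finite radius $k$ out of mere continuity; without compactness the output symbol could in principle depend on unboundedly many coordinates. Once $f$ is in hand, I would propagate it to every position using shift-commutation: for any cell $i$, $F(x)_i=\sigma^i(F(x))_0=F(\sigma^i x)_0=f((\sigma^i x)\isub k)$, and $(\sigma^i x)\isub k$ is exactly the window of $x$ centered at $i$ of radius $k$. Reading off the appropriate interval then exhibits $F$ as a cellular automaton with local rule $f$. The remaining bookkeeping of anchor and diameter differs slightly between the two cases — in the twosided case $\Mset=\Zset$ one gets $(\sigma^i x)\isub k=x\sco{i-k}{i+k+1}$, hence $d=2k+1$ and $m=k$, while in the onesided case $\Mset=\Nset$ the central window $x\isub k=x\sco 0{k+1}$ reads forward and gives $d=k+1$, $m=0$ — but both are routine.
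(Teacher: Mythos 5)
Your proof is correct. Note that the paper itself offers no proof of this statement: it is quoted as the classical Curtis--Hedlund--Lyndon theorem with a citation to Hedlund's 1969 article, so there is no internal argument to compare against. What you wrote is the standard proof of that theorem --- the easy direction by direct substitution and locality, and the converse by the compactness argument that extracts a uniform radius from the finite clopen partition $\{F^{-1}([a])\}_{a\in A}$ before propagating the local rule by shift-commutation --- and you handle the onesided/twosided bookkeeping (anchor $m=0$, diameter $k+1$ versus anchor $k$, diameter $2k+1$) correctly, so the argument is complete.
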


More generally, we define a \dfn{partial cellular automata} (PCA) as a TDDS $(\Sigma,F)$ where $\Sigma \subseteq \am$ and $F$ commutes with the shift.
One can note that Hedlund's theorem still applies and that such systems correspond to restrictions of cellular automata to subshifts. Therefore, they are also defined thanks to an anchor $m$, a diameter $d$ and a local function $f:\lang_d(\Sigma)\to A$.

We say that the PCA is \dfn{oneway}\resp{\dfn{oblic}} if its anchor can be taken $m\le0$ or $m\ge d-1$\resp{$m<0$ or $m\ge d$}. A state $0\in A$ is \dfn{quiescent} for a PCA $(\Sigma,F)$ if $0^d\in\lang(\Sigma)$ and $f(0,\ldots,0)=0$. The global rule $F$ of a PCA can be canonically extended to all words by: \[\appl F{\lang(\Sigma)}{\lang(\Sigma)}u{(f(u\sco i{i+d}))_{0\le i<\length u -d-1}~.}\]
 Without loss of generality, we will sometimes assume the neighborhood to be symmetrical, \ie $d=2m+1$ if $\Mset=\Zset$, $m=0$ if $\Mset=\Nset$; in that case $r=d-m-1$ is called the \dfn{radius} of the CA.


A basic example of CA that we will use throughout the paper is $\Min:\deux^\Nset\to\deux^\Nset$, defined by anchor $0$, diameter $2$ and local rule:
\[\appl f{\deux^2}\deux{(a,b)}{a\times b~.}\]
One easy remark is that this CA admits both $0$ and $1$ as quiescent states.

\subsection{Dynamical properties}
We now study the dynamics of the previously introduced systems $(X,F)$, \ie the structure of the \dfn{orbits} $\orb_F(x)=\sett{F^t(x)}{t\in\Nset}$ of the points $x\in X$. We note the \dfn{positive orbit} $\orb^+_F(x)=\sett{F^t(x)}{t\in\Ns}$ of $x\in X$.
In the case of a TDDS $F$, we can depict such an orbit $\orb_F(x)$ in a space-time diagram which consists in piling up the successive iterates $x,F(x),F^2(x)\ldots$ (see Figure \ref{fig:st-diag}).

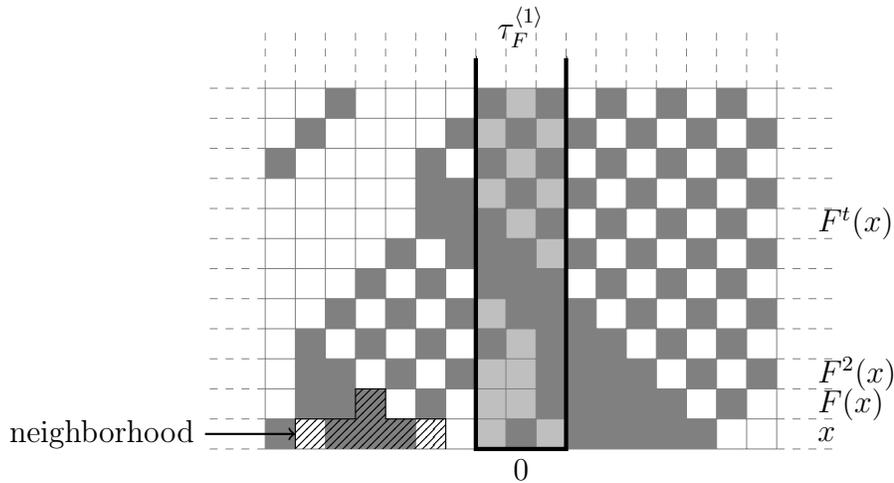
\begin{figure}[htp]\centering
  \begin{tikzpicture}[scale=.4]
\fill[gray] (0,0) rectangle +(1,1)
            (2,0) rectangle +(1,1)
            (3,0) rectangle +(1,1)
            (4,0) rectangle +(1,1)
            (8,0) rectangle +(1,1)
            (10,0) rectangle +(1,1)
            (11,0) rectangle +(1,1)
            (12,0) rectangle +(1,1)
            (13,0) rectangle +(1,1)
            (14,0) rectangle +(1,1)
            (1,1) rectangle +(1,1)
            (2,1) rectangle +(1,1)
            (3,1) rectangle +(1,1)
            (5,1) rectangle +(1,1)
            (9,1) rectangle +(1,1)
            (10,1) rectangle +(1,1)
            (11,1) rectangle +(1,1)
            (12,1) rectangle +(1,1)
            (13,1) rectangle +(1,1)
            (15,1) rectangle +(1,1)
            (1,2) rectangle +(1,1)
            (2,2) rectangle +(1,1)
            (4,2) rectangle +(1,1)
            (6,2) rectangle +(1,1)
            (9,2) rectangle +(1,1)
            (10,2) rectangle +(1,1)
            (11,2) rectangle +(1,1)
            (12,2) rectangle +(1,1)
            (14,2) rectangle +(1,1)
            (16,2) rectangle +(1,1)
            (1,3) rectangle +(1,1)
            (3,3) rectangle +(1,1)
            (5,3) rectangle +(1,1)
            (7,3) rectangle +(1,1)
            (9,3) rectangle +(1,1)
            (10,3) rectangle +(1,1)
            (11,3) rectangle +(1,1)
            (13,3) rectangle +(1,1)
            (15,3) rectangle +(1,1)
            (2,4) rectangle +(1,1)
            (4,4) rectangle +(1,1)
            (6,4) rectangle +(1,1)
            (8,4) rectangle +(1,1)
            (9,4) rectangle +(1,1)
            (10,4) rectangle +(1,1)
            (12,4) rectangle +(1,1)
            (14,4) rectangle +(1,1)
            (16,4) rectangle +(1,1)
            (3,5) rectangle +(1,1)
            (5,5) rectangle +(1,1)
            (7,5) rectangle +(1,1)
            (8,5) rectangle +(1,1)
            (9,5) rectangle +(1,1)
            (11,5) rectangle +(1,1)
            (13,5) rectangle +(1,1)
            (15,5) rectangle +(1,1)
            (4,6) rectangle +(1,1)
            (6,6) rectangle +(1,1)
            (7,6) rectangle +(1,1)
            (8,6) rectangle +(1,1)
            (10,6) rectangle +(1,1)
            (12,6) rectangle +(1,1)
            (14,6) rectangle +(1,1)
            (16,6) rectangle +(1,1)
            (5,7) rectangle +(1,1)
            (6,7) rectangle +(1,1)
            (7,7) rectangle +(1,1)
            (9,7) rectangle +(1,1)
            (11,7) rectangle +(1,1)
            (13,7) rectangle +(1,1)
            (15,7) rectangle +(1,1)
            (5,8) rectangle +(1,1)
            (6,8) rectangle +(1,1)
            (8,8) rectangle +(1,1)
            (10,8) rectangle +(1,1)
            (12,8) rectangle +(1,1)
            (14,8) rectangle +(1,1)
            (16,8) rectangle +(1,1)
            (0,9) rectangle +(1,1)
            (5,9) rectangle +(1,1)
            (7,9) rectangle +(1,1)
            (9,9) rectangle +(1,1)
            (11,9) rectangle +(1,1)
            (13,9) rectangle +(1,1)
            (15,9) rectangle +(1,1)
            (1,10) rectangle +(1,1)
            (6,10) rectangle +(1,1)
            (8,10) rectangle +(1,1)
            (10,10) rectangle +(1,1)
            (12,10) rectangle +(1,1)
            (14,10) rectangle +(1,1)
            (16,10) rectangle +(1,1)
            (2,11) rectangle +(1,1)
            (7,11) rectangle +(1,1)
            (9,11) rectangle +(1,1)
            (11,11) rectangle +(1,1)
            (13,11) rectangle +(1,1)
            (15,11) rectangle +(1,1);
\foreach \i in {0,1,...,12}
  \draw[help lines,dashed] (0,\i) -- +(-2,0)
                (17,\i) -- +(2,0);
 
\draw[help lines] (0,0) grid +(17,12);

\foreach \i in {0,1,...,17}
  \draw[help lines,dashed] (\i,12) -- +(0,2);

      \draw[draw=none,fill=gray,opacity=.5] (7,0) rectangle (10,12);
      \node[anchor=south] at (8.5,13) {$\tau\iexp1_F$};
      \foreach \i in {7,10}
	\draw[dashed,ultra thick] (\i,13) -- (\i,13);
      \draw[ultra thick] (10,13) -- (10,0) -- (7,0) -- (7,13);
      \draw[pattern=north east lines] (1,0) -- (1,1) -- (3,1) -- (3,2) -- (4,2) -- (4,1) -- (6,1) -- (6,0);
      \draw[thick,->] (-2,.5) -- (1,.5);
      \node[anchor=east] at (-2,.5) {neighborhood};
      \node[anchor=west] at (18,.5) {$x$};
      \node[anchor=west] at (18,1.5) {$F(x)$};
      \node[anchor=west] at (18,2.5) {$F^2(x)$};
      \node[anchor=west] at (18,7.5) {$F^t(x)$};
      \node[anchor=north] at (8.5,0) {$0$};
  \end{tikzpicture}
  \caption{Space-time diagram and trace over segment $\cc{-1}1$}\label{fig:st-diag}
\end{figure}

\paragraph{Traces}

In subspaces of $\am$, the continuity implies some concept of \xpr{locality}; we can study what happens to some portion of the configuration. This notion is called \dfn{trace} and is depicted in Figure \ref{fig:st-diag}. It can be formally defined as follows:
\begin{defi}
  The \dfn{trace application} of some TDDS $(\Sigma\subset\am,F)$ in cells $\co ik$, where $i,k\in\Mset$ and $i<k$, is: \[\appl{T_F^{\co ik}}\Sigma{\lang_{k-i}(\Sigma)^\Nset}x{(F^t(x)\sco ik)_{t\in\Nset}~.}\]
\end{defi}

The image $\tau_F^{\co ik}=T_F^{\co ik}(\Sigma)$ is a onesided subshift over alphabet $\lang_{k-i}(\Sigma)$ and $T_F^{\co ik}$ is a factor map of $(\Sigma,F)$ onto $(\tau_F^{\co ik},\sigma)$. In TDDS, finer and finer traces can be used to approach the global system, like observations made with some error represented by the partition. We will note $\tau_F=\tau_F^0$ the central trace.

The traces of some PCA $(\Sigma,F)$ have a very specific property: for any $i,k,h\in\Mset$, $\tau_F^{\co ik}=\tau_F^{\co{i+h}{k+h}}$ thanks to invariance by shift. In particular, we get the following basic property.
\begin{prop}\label{p:concat}
If $(\Sigma,F)$ is a PCA of diameter $d\in\Nset$ and anchor $m\in\sco0d$ on some $(d-1)$-SFT, $q\in\Nset\sqcup\{\infty\}$, $i\in\Mset$ and $x,y\in\Sigma$ two configurations such that $T^{\co i{i+d-1}}_F(x)\sco0q=T^{\co i{i+d-1}}_F(y)\sco0q$. Then for any generation $t\in\co0q$, $F^t(x\conc_iy)=F^t(x)\conc_iF^t(y)$.
\end{prop}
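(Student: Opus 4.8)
The plan is to prove a slightly stronger statement by induction on $t\in\co0q$: that $F^t(x\conc_iy)$ coincides with $F^t(x)$ on every cell $k<i+d-1$ and with $F^t(y)$ on every cell $k\ge i$. Since these two ranges cover $\Mset$, this immediately yields the announced identity $F^t(x\conc_iy)=F^t(x)\conc_iF^t(y)$. The two descriptions overlap exactly on the cells $\co i{i+d-1}$ carrying the trace, and there the hypothesis already forces $F^t(x)$ and $F^t(y)$ to agree (for $t\in\co0q$), so the strengthened statement is consistent. Carrying this \emph{two-sided} agreement, rather than the bare identity, is what lets the induction close.

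First I would check that $z:=x\conc_iy$ actually belongs to $\Sigma$, so that its iterates are defined. As $\Sigma$ is a $(d-1)$-SFT, it suffices to verify that every factor of $z$ of length $d-1$ lies in $\lang_{d-1}(\Sigma)$. A factor entirely to the left (resp. right) of the cut is a factor of $x$ (resp. $y$); and a factor straddling cell $i$ has all its cells $\ge i$ confined to the trace window $\co i{i+d-1}$, on which $x$ and $y$ coincide at time $0$, so it still equals a factor of $x$. Hence $z\in\Sigma$, and all $F^t(z)$ are well defined with $F^{t+1}(z)_k=f(F^t(z)\sco{k-m}{k-m+d})$.

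For the base case $t=0$ the statement is just the definition of $\conc_i$ combined with $x\sco i{i+d-1}=y\sco i{i+d-1}$, which the trace hypothesis provides. For the inductive step, assuming the statement at some $t$ with $t,t+1\in\co0q$, I would discuss each cell according to the position of the length-$d$ window $\co{k-m}{k-m+d}$ relative to the cut, using $0\le m\le d-1$. For $k<i$ this window stays within the cells $\le i+d-2$, where $F^t(z)=F^t(x)$, giving $F^{t+1}(z)_k=F^{t+1}(x)_k$; for $k\ge i+m$ the window stays within the cells $\ge i$, where $F^t(z)=F^t(y)$, giving $F^{t+1}(z)_k=F^{t+1}(y)_k$.

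The delicate cells — and the step I expect to be the main obstacle — are the boundary ones $i\le k<i+m$, where the window straddles the cut so neither one-sided description applies directly. The key observation is that the trace hypothesis holds at \emph{every} generation, not only at $t=0$. Such $k$ still satisfy $k\le i+d-2$, so their window remains within the cells $\le i+d-2$ and yields $F^{t+1}(z)_k=F^{t+1}(x)_k$; but the same bound puts $k$ inside the trace window $\co i{i+d-1}$, so the hypothesis at time $t+1$ forces $F^{t+1}(x)_k=F^{t+1}(y)_k$, reconciling the two sides and giving $F^{t+1}(z)_k=F^{t+1}(y)_k$ as required. This is precisely where the width $d-1$ of the trace and the bound $m\le d-1$ are used essentially: they guarantee that each freshly computed boundary cell falls back into the window that the hypothesis keeps synchronized. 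Propagating both halves then closes the induction.
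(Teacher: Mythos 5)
Your proof is correct and follows essentially the same route as the paper's: a recurrence on $t$ showing that the length-$d$ neighborhood of each cell in $F^t(x\conc_iy)$ coincides with the corresponding neighborhood in $F^t(x)$ or in $F^t(y)$, so that the local rule gives the same output. You are in fact more careful than the paper's one-sentence argument, since you explicitly verify $x\conc_iy\in\Sigma$ via the $(d-1)$-SFT hypothesis and handle the boundary cells $i\le k<i+m$ by invoking the trace agreement at generation $t+1$, both of which the paper leaves implicit.
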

Note that the conditions over the anchor and the order of the SFT are not so restrictive, since we can always enlarge the diameter.

\begin{proof}
We can see by recurrence on $t<q$ that the neighborhood $F^t(x\conc_iy)\sco{k-m}{k-m+d}$ of each cell $k\in\Mset$ corresponds to the neighborhood $F^t(x)\sco{k-m}{k-m+d}$ if $k<m$, $F^t(y)\sco{k-m}{k-m+d}$ otherwise; therefore the application of the local rule remains unchanged.
\end{proof}

\paragraph{Nilpotency, preperiodicity}

We are first interested in the DDS where the dynamics of every point are ultimately very simple (either stable or periodic). Let $(X,F)$ a DDS, $z\in X$ a point. A point $x \in X$ is said \dfn{$z$-nilpotent} if there exists a generation $q\in\Nset$ such that for any $t\ge q$, $F^t(x)=z$. It is said \dfn{$(p,q)$-preperiodic} if $p,q \in \Nset$ such that $F^{p+q}(x)=F^q(x)$. The system is said to be \dfn{weakly $z$-nilpotent}\resp{\dfn{weakly preperiodic}}  if all of its points are $z$-nilpotent\resp{preperiodic}. These definitions allow stronger versions when conditions are uniformized on every points, as follows.
\begin{defi}
  A DDS $(X,F)$ is said \dfn{$z$-nilpotent}, for $z \in X$, if there exists a generation $q\in\Nset$ such that for any $t\ge q$, $F^t(X)=\{z\}$. It is said \dfn{$p$-periodic}\resp{ \dfn{$(p,q)$-preperiodic}} if $F^p=\id$\resp{$F^{p+q}=F^q$}.
\end{defi}
The value $q$ is called the \dfn{preperiod} and $p$ the \dfn{ultimate period}.
We will sometimes speak of periodic, preperiodic or $p$-preperiodic DDS.

The finite DDS are exactly the preperiodic subshifts -- up to conjugacy.

If $(\Sigma\subset\am,F)$ is a PCA, $0\in A$ and $t\in\Nset$ a generation such that for any configuration $x\in\Sigma$, $F^t(x)_0=0$, then it can be seen (thanks to shift-invariance) that $F$ is \ssl{nilpotent}. Conversely, if $(\am,F)$ is a CA which is not $0$-nilpotent for some $0\in A$, then for any $t\in\Nset$, $F^{-t}([A\setminus\{0\}])$ is nonempty and open; in particular, it contains some \ssl{$0$-finite} configuration. Moreover, looking at the dynamics of the finite subsystem of uniform configurations, we can see that for any CA $(\am,F)$, there is a generation $p\in\co0{\card A}$ and a state $0\in A$ which is quiescent for the CA $F^p$, in such a way that the set of \ssl{$0$-finite} configurations is \ssl{$F^p$-invariant}. Summing up the two previous points, we get that for any generation $t\in\Nset$, $F^t(\am)$ contains some $0$-finite nonuniform configuration $z=\pinf0[u]\uinf0$, with $u\in A^*$ and $z_0\ne0$. Last remark but not least, both nilpotency and preperiodicity are preserved under simulation.

We can actually prove that the classes of weakly nilpotent or periodic CA collapse to their strong counterpart.
\begin{prop}
 Any \ssl{weakly preperiodic}\resp{\ssl{weakly nilpotent}} PCA $(\Sigma,F)$ over some transitive subshift $\Sigma$ is \ssl{preperiodic}\resp{\ssl{nilpotent}}.
\end{prop}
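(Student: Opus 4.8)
The plan is to prove both statements in parallel by a single scheme: a \emph{Baire category} argument to extract, on some cylinder, a \emph{uniform} (configuration-wide) instance of the asserted eventual behaviour, followed by a \emph{gluing} argument that propagates this instance to every configuration, using transitivity of $\Sigma$ together with the finite speed of propagation of a PCA.

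I would first normalise the target in the nilpotent case. If $(\Sigma,F)$ is weakly $z$-nilpotent, then $z$ is a fixed point of $F$ (indeed $F(z)=F(F^q(z))=F^{q+1}(z)=z$ for $q$ large) and every orbit converges to it; since $F\sigma=\sigma F$ and $\sigma x$ is again $z$-nilpotent, passing to the limit in $F^t(\sigma x)=\sigma F^t(x)$ gives $\sigma z=z$, so $z$ is the uniform configuration all of whose cells carry a single quiescent state $0$. I then set, for the two cases respectively, $X_q=\set x\Sigma{F^q(x)=z}$ and $P_{p,q}=\set x\Sigma{F^{p+q}(x)=F^q(x)}$. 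Each is closed by continuity of $F$, and the hypotheses give $\Sigma=\bigcup_q X_q$ (resp. $\Sigma=\bigcup_{p,q}P_{p,q}$), a countable union of closed sets. As $\Sigma$ is compact metric, hence a Baire space, one of these sets has nonempty interior, so it contains a cylinder: there exist a word $w\in\lang(\Sigma)$ and integers (a single $q$, resp. a single pair $(p,q)$) such that \emph{every} $x\in[w]_0$ satisfies $F^q(x)=z$ (resp. $F^{p+q}(x)=F^q(x)$). These are \emph{whole-configuration} identities, not merely statements about one cell, and by shift-invariance the same identity holds for every configuration containing $w$ at \emph{any} position.

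The core step is to spread this identity to an arbitrary $x\in\Sigma$, cell by cell. Fix a cell $c$; by finite speed of propagation, the value $F^q(x)_c$ (resp. the pair $F^{p+q}(x)_c$ and $F^q(x)_c$) depends only on the restriction $x\restr W$ to a finite window $W$ around $c$, of radius proportional to $q$ (resp. $p+q$) and to the diameter $d$. Using transitivity I build a configuration $x'\in\Sigma$ that contains $w$ at some position lying outside $W$ and whose restriction to $W$ equals that of $x$: concatenate $w$, a transitivity connector, and the word $x\restr W$, then extend to a full configuration of $\Sigma$ (Proposition~\ref{p:concat} may be invoked to keep this gluing harmless for the dynamics). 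Since $x'$ contains $w$, it satisfies the identity of the previous paragraph; and since $x'$ and $x$ agree on $W$, locality gives $F^q(x')_c=F^q(x)_c$ (resp. equality of both sides for $x$ and $x'$). Combining, $F^q(x)_c=z_c$ (resp. $F^{p+q}(x)_c=F^q(x)_c$). As $c$ was arbitrary, $F^q(x)=z$ for all $x$, whence $F^q(\Sigma)=\{z\}$ and $F$ is $z$-nilpotent; in the other case $F^{p+q}=F^q$, so $F$ is $(p,q)$-preperiodic.

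The part I expect to require the most care is this final transfer, and it is exactly where transitivity is indispensable: the Baire step only controls configurations that \emph{already} contain $w$, and without transitivity an arbitrary $x$ need not be approximable by such configurations with a prescribed central window. Two further points must be checked but are routine: that the cylinder really forces a configuration-wide identity (handled by choosing the occurrence of $w$ disjoint from $W$, possible precisely because the propagation window is finite), and the minor bookkeeping distinguishing $\Mset=\Zset$ from $\Mset=\Nset$, where $\sigma$ is only surjective rather than bijective; in the one-sided case one places $w$ to the left of $W$ so that the shift argument still transfers the identity to the cells under consideration.
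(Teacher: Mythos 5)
Your first half is exactly the paper's: decompose $\Sigma$ into the countably many closed sets $P_{p,q}=\set x\Sigma{F^{p+q}(x)=F^q(x)}$ (resp.\ $X_q=\set x\Sigma{F^q(x)=z}$, after the same normalisation of $z$ to a uniform fixed point), and apply Baire to get one piece with nonempty interior. Where you diverge is in how transitivity is then used. The paper observes that each piece is not only closed but $\sigma$-invariant (because $F\sigma=\sigma F$ and $\sigma z=z$), hence a subshift; since transitive points form a residual subset of $\Sigma$, the piece with nonempty interior contains a $\sigma$-transitive point, and a closed $\sigma$-invariant set containing a dense orbit is all of $\Sigma$. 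That finishes the proof in two lines, with no local rule, no windows, and no distinction between $\Mset=\Nset$ and $\Mset=\Zset$. You instead keep only a cylinder $[w]$ from the Baire step and re-spread the identity cell by cell, gluing an occurrence of $w$ next to an arbitrary window through the word form of transitivity and invoking finite propagation speed. This is correct in the twosided case (note that Proposition~\ref{p:concat} is not what you need for the gluing: transitivity at the level of $\lang(\Sigma)$ already produces the configuration $x''$, and only locality of $F^{p+q}$ is then used), but it re-proves by hand what shift-invariance of the Baire pieces gives for free, and it is precisely what creates your onesided troubles.

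Those troubles are the one genuine soft spot in your write-up. In the onesided case an occurrence of $w$ at position $j$ only yields $\sigma^j(F^q(y))=z$, hence only controls the cells $\ge j$ of $F^q(y)$, because $\sigma$ is no longer injective. So to control cell $c$ you need an occurrence of $w$ at some position $j\le c$, disjoint from the window $W$ determining $F^q(\cdot)_c$ (which sits to the right of $c$, the anchor being $\le0$); when $c$ is smaller than $\length w$ plus the connector length there is no room in $\co0c$, and you do not control the length of the connector that transitivity furnishes, so the cells near the origin escape your argument as written. The patch is short but must be said: on a transitive onesided subshift, $\sigma$ is surjective (transitivity places a point of $\sigma(\Sigma)$ in every ball $\ball\varepsilon z$, and $\sigma(\Sigma)$ is compact, hence closed), so every $x\in\Sigma$ can be written $\sigma^N(y)$ with $N$ arbitrarily large and $y\in\Sigma$, and then $F^q(x)_c=F^q(y)_{c+N}$ falls among the cells you do control. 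Alternatively --- and preferably --- replace your whole second half by the shift-invariance observation above, which is insensitive to sidedness.
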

\begin{proof}
Our hypothesis consists in decomposing the compact set $\Sigma$ of nonempty interior into the union of subshifts $\bigcup_{q\in\Nset}\bigcup_{p\in\Ns}F^{-q}(\set x\am{F^p(x)=x})$\resp{$\bigcup_{q\in\Nset}F^{-q}(\{z\})$}. By Baire's theorem, one of them has nonempty interior, and thus contains a configuration $x$ which is transitive for $\sigma$. As a subsystem, it shall contain also $\cl{\orb_\sigma(x)}=\Sigma$.
\end{proof}

\paragraph{Transitivity, recurrence, nonwanderingness}
Up to now, we have looked at properties regarding each orbit independently. Let us now take benefit of the topology to study how distinct orbits from a single open set behave.
For a DDS $(X,F)$, a point $x\in X$ is said \dfn{transitive} if its \ssl{positive orbit} is \ssl{dense}: $\cl{\orb^+_F(x)}=X$. It is said \dfn{recurrent} if for any neighborhood $U$ of $x$, there is some generation $t>0$ such that $F^t(x)\in U$. It is \dfn{nonwandering} if for any neighborhoods $U,V$ of $x$, there is some point $y\in U$ and some generation $t>0$ such that $F^t(y)\in V$. Those definitions can be extended to dynamical systems.
A DDS $(X,F)$ is \dfn{transitive} if it admits a residual subset of transitive points and \dfn{nonwandering} if all of its points are nonwandering.

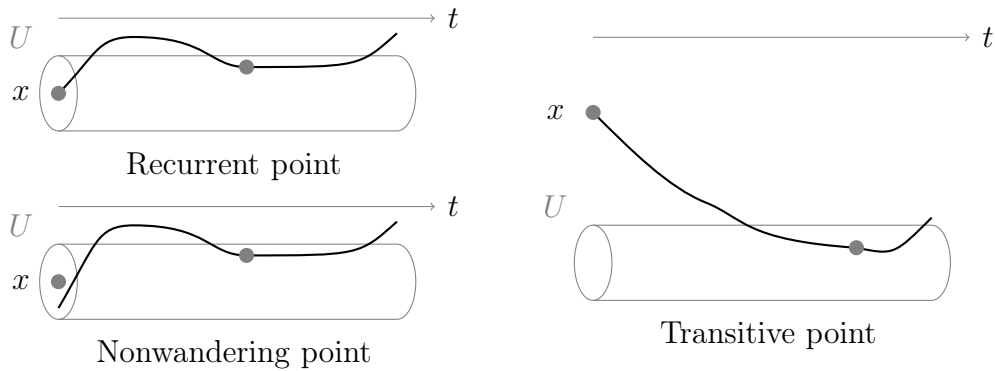
\begin{figure}[!htp]
  \centering
  \begin{tabular}{c c}
\begin{tabular}{c}
    \begin{tikzpicture}
      \draw[help lines,->] (0,1) -- (5,1);
      \node at (5.25,1) {$t$};
      \node at (-.5,0) {$x$};
      \node[gray] at (-.5,.75) {$U$};
      \draw[gray] (0,0) ellipse (.25cm and .5cm);
      \draw[gray] (0,.5) -- (4.5,.5);
      \draw[gray] (0,-.5) -- (4.5,-.5);
      \draw[gray] (4.5,-.5) arc (-90:90:.25cm and .5cm);
      \draw[thick] (0,0)  .. controls (.5,.5) and (0.5,.75) .. (1,.75) .. controls (2,.75) and (2,.35) .. (2.5,.35).. controls (4,.35) .. (4.5,.8);
      \fill[gray] (0,0) circle (.1cm); 
      \fill[gray] (2.5,.35) circle (.1cm); 
    \end{tikzpicture}
\\
Recurrent point
\\
    \begin{tikzpicture}
      \draw[help lines,->] (0,1) -- (5,1);
      \node at (5.25,1) {$t$};
      \node at (-.5,0) {$x$};
      \node[gray] at (-.5,.75) {$U$};
      \draw[gray] (0,0) ellipse (.25cm and .5cm);
      \draw[gray] (0,.5) -- (4.5,.5);
      \draw[gray] (0,-.5) -- (4.5,-.5);
      \draw[gray] (4.5,-.5) arc (-90:90:.25cm and .5cm);
      \draw[thick] (0,-.35)  .. controls (.5,.5) and (0.5,.75) .. (1,.75) .. controls (2,.75) and (2,.35) .. (2.5,.35).. controls (4,.35) .. (4.5,.8);
      \fill[gray] (0,0) circle (.1cm); 
      \fill[gray] (2.5,.35) circle (.1cm); 
    \end{tikzpicture}
\\ Nonwandering point
 \end{tabular}
 &
\begin{tabular}{c}
    \begin{tikzpicture}
      \draw[help lines,->] (0,1) -- (5,1);
      \node at (5.25,1) {$t$};
      \node at (-.5,0) {$x$};
      \node[gray] at (-.5,-1.25) {$U$};
      \draw[gray] (0,-2) ellipse (.25cm and .5cm);
      \draw[gray] (0,-1.5) -- (4.5,-1.5);
      \draw[gray] (0,-2.5) -- (4.5,-2.5);
      \draw[gray] (4.5,-2.5) arc (-90:90:.25cm and .5cm);
      \draw[thick] (0,0) .. controls (.5,-.5) and (1,-1) .. (1.5,-1.2) .. controls (2,-1.4) and (2,-1.7) .. (3.5,-1.8) .. controls (4,-1.9) .. (4.5,-1.4);
      \fill[gray] (0,0) circle (.1cm); 
      \fill[gray] (3.5,-1.8) circle (.1cm); 
    \end{tikzpicture}
\\
   Transitive point\end{tabular} 
  \end{tabular}
  \caption{Recurrence and nonwanderingness}
  \label{fig:rec}
\end{figure}

By compactness, we can have the following equivalent characterization: a DDS $(X,F)$ is \dfn{transitive} if and only if for any nonempty open sets $U,V\subset X$, there is some point $x\in U$ and some generation $t>0$ such that $F^t(x)\in V$. It is \dfn{nonwandering} if any nonempty open set $U\subset X$ contains some point $x\in U$ and some generation $t>0$ such that $F^t(x)\in U$. This is equivalent to having only nonwandering points and to having a residual set of recurrent points, by the following proposition (see \cite{surj}).
\begin{prop}\label{p:recautotr}
Any nonwandering DDS has \ssl{residual} set of \ssl{recurrent} points.
\end{prop}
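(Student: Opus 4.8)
The plan is to realize the set of recurrent points as a $G_\delta$ subset of $X$ and then invoke Baire's theorem, which applies because $X$, being compact metric, is complete. First I would fix the metric $\dist$ and record the elementary reformulation of recurrence: a point $x\in X$ is recurrent exactly when, for every $k\in\Ns$, there is some $t>0$ with $\dist(F^t(x),x)<1/k$. Setting
\[V_k=\bigcup_{t>0}\set{x}{X}{\dist(F^t(x),x)<1/k}\, ,\]
each $V_k$ is open, since $x\mapsto\dist(F^t(x),x)$ is continuous ($F^t$ being continuous) and a union of open sets is open. The set $R$ of recurrent points is then precisely $\bigcap_{k\in\Ns}V_k$.

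The core step is to show that every $V_k$ is dense. Let $U\subset X$ be any nonempty open set. Since $X$ is metric, I can choose a ball $B=\ball ry\subseteq U$ of radius $r<1/(2k)$, so that any two points of $B$ lie within distance $<1/k$ of one another. Applying to $B$ the characterization of nonwanderingness stated just above the proposition — any nonempty open set contains a point whose orbit returns to it — I obtain $x\in B$ and $t>0$ with $F^t(x)\in B$. As $x$ and $F^t(x)$ both belong to $B$, we get $\dist(F^t(x),x)<1/k$, whence $x\in V_k$; and $x\in B\subseteq U$. Thus $V_k$ meets every nonempty open set, i.e.\ $V_k$ is dense.

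It then only remains to conclude: $R=\bigcap_{k\in\Ns}V_k$ is a countable intersection of dense open subsets of the complete space $X$, so Baire's theorem yields that $R$ is a dense $G_\delta$, hence residual. I expect the one genuinely delicate point to be the passage from nonwanderingness to the recurrence estimate, since nonwanderingness only guarantees a return to the same open set, not a return close to the starting point; the device that resolves this is exactly the shrinking of the test set to diameter below $1/k$, which converts ``returning to the set'' into ``returning within $1/k$ of oneself''.
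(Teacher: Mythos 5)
Your proof is correct and is essentially the paper's own argument: both express the set of recurrent points as the countable intersection of the open sets $\set xX{\exists t>0,\ \dist(F^t(x),x)<1/k}$, prove density of each by applying nonwanderingness at scale $1/(2k)$ (the same triangle-inequality device), and conclude with Baire's theorem. The only superficial difference is that you invoke the open-set characterization of a nonwandering system on a small ball, whereas the paper applies the pointwise definition of a nonwandering point directly.
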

\begin{proof}
 If $(X,F)$ is a DDS, the set of its recurrent points can be written $\mathcal R=\bigcap_{n\in\Nset}\mathcal R_n$, where $\mathcal R_n=\set xX{\exists t>0,\dist(F^t(x),x)<1/n}$ for all $n\in\Nset$. Note that $\mathcal R_n$ is open. Moreover, if $x\in X$ is nonwandering and $\varepsilon=1/2n$, then by definition there exists some neighbor point $y\in\ball\varepsilon x$ and some generation $t>0$ such that $F^t(y)\in\ball\varepsilon x$; in particular $y\in R_n$. We have proved that each $R_n$ is dense, which gives the result by Baire's theorem.
\end{proof}

\paragraph{Equicontinuity, sensitivity}
Now, let us take some alternative observation method and study how large changes can appear when introducing a small change in the configuration.

Let $(X,F)$ a DDS, $\varepsilon\in\Rset_+\setminus\{0\}$. A point $x\in X$ is said \dfn{$\varepsilon$-unstable} if for any radius $\delta>0$, there is a point $y\in\ball\delta x$ and a generation $t\in\Nset$ for which $\dist(F^t(x),F^t(y))>\varepsilon$. Otherwise the point is said \dfn{$\varepsilon$-stable} (see Figure \ref{fig:stab}). A point which is $\varepsilon$-stable for any $\varepsilon>0$ is said \dfn{equicontinuous}.

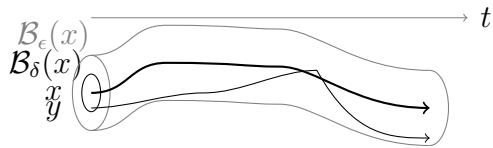
\begin{figure}[!htp]
  \centering
  \begin{tikzpicture}
      \draw[help lines,->] (0,1) -- (5,1);
      \node at (5.25,1) {$t$};
    \node at (-.5,0) {$x$};
    \node at (-.5,-.25) {$y$};
    \node[gray] at (-.5,.75) {$\ball\epsilon x$};
    \node at (-.6,.375) {$\ball\delta x$};
    \draw[gray] (0,0) ellipse (.25cm and .5cm);
    \begin{scope}[yshift=.5cm]
    \draw[gray] (0,0)  .. controls (.5,0) and (0.5,.4) .. (1,.4) .. controls (2,.4) and (2,.35) .. (2.5,.35) .. controls (3,.35) and (3.5,-.2) .. (4.5,-.2); 
    \end{scope}
   \begin{scope}[yshift=-.5cm]
    \draw[gray] (0,0)  .. controls (.5,0) and (0.5,.4) .. (1,.4) .. controls (2,.4) and (2,.35) .. (2.5,.35) .. controls (3,.35) and (3.5,-.2) .. (4.5,-.2); 
    \end{scope}
    \draw[gray] (4.5,-.7) arc (-90:90:.25cm and .5cm);
    \draw[->,thick] (0,0)  .. controls (.5,0) and (0.5,.4) .. (1,.4) .. controls (2,.4) and (2,.35) .. (2.5,.35) .. controls (3,.35) and (3.5,-.2) .. (4.5,-.2);
    \draw (0,0) ellipse (.125cm and .25cm);
    \draw[->] (0,-.2)  .. controls (.5,-.2) and (1,0) .. (1.5,0) .. controls (2,0) and (2.5,.3) .. (3,.3) .. controls (3.5,-.6) and (4,-.6) .. (4.5,-.6);

\end{tikzpicture}
 \caption{An $\varepsilon$-stable point.}
  \label{fig:stab}
\end{figure}

\begin{defi}
A DDS $F$ is said \dfn{$\varepsilon$-sensitive} if all of its points are \ssl{$\varepsilon$-unstable}, with $\varepsilon>0$. It is said \dfn{almost equicontinuous} if its set of equicontinuous points is a residual. It is \dfn{equicontinuous} if for any radius $\varepsilon>0$, there exists a radius $\delta>0$ such that for all points $x,y\in X$ with $\dist(x,y)<\delta$ and all generation $t\in\Nset$ we have $\dist(F^t(x),F^t(y))<\varepsilon$.
\end{defi}
Due to the compactness of the underlying space, it is possible to invert the two quantifiers in the definition of equicontinuity and to achieve the following characterization: a DDS $F$ is equicontinuous if and only if all of its points are.

A first example of equicontinuous systems is the preperiodic ones: their behavior only depends on the beginning of their orbit.
\begin{prop}\label{p:prepequi}
 Any \ssl{preperiodic} DDS is \ssl{equicontinuous}.
\end{prop}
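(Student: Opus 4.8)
The plan is to exploit preperiodicity in order to reduce the infinite family of iterates $(F^t)_{t\in\Nset}$ to a finite one, and then to invoke uniform continuity on the compact space $X$. First I would observe that if $F^{p+q}=F^q$, then for every $t\ge q$ one has $F^t=F^{q+((t-q)\bmod p)}$, so that the set $\sett{F^t}{t\in\Nset}$ of distinct iterate maps is finite, consisting of at most the $p+q$ maps $\id=F^0,F^1,\ldots,F^{q+p-1}$ (the $q$ maps $F^0,\ldots,F^{q-1}$ of the preperiod, followed by the $p$ maps $F^q,\ldots,F^{q+p-1}$ through which the dynamics then cycles).

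Next, each iterate $F^t$ is continuous, being a composition of the continuous map $F$. Since $X$ is compact, every continuous self-map of $X$ is uniformly continuous (Heine--Cantor). Hence, fixing $\varepsilon>0$, for each of the finitely many distinct maps $F^t$ I can choose a radius $\delta_t>0$ such that $\dist(x,y)<\delta_t$ implies $\dist(F^t(x),F^t(y))<\varepsilon$ for all $x,y\in X$.

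Finally, I would set $\delta=\Min_t\delta_t$, the minimum being taken over the finitely many distinct iterates; this $\delta$ is strictly positive precisely because the minimum is over a finite set. Then for any $x,y\in X$ with $\dist(x,y)<\delta$ and any generation $t\in\Nset$, the map $F^t$ coincides with one of the finitely many chosen maps, so $\dist(F^t(x),F^t(y))<\varepsilon$. By the definition recalled just before the statement, this is exactly equicontinuity.

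As for obstacles, there is no genuine difficulty here; the proof is essentially a bookkeeping argument. The only two points that must be handled with care are that preperiodicity collapses the iterates to a \emph{finite} set of maps, and that this finiteness is what keeps $\delta=\Min_t\delta_t$ positive --- an infimum over infinitely many radii could perfectly well vanish, which is exactly why an arbitrary DDS need not be equicontinuous. Compactness of $X$ enters only through the elementary upgrade from continuity to uniform continuity of each individual iterate.
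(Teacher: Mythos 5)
Your proof is correct and follows essentially the same route as the paper's: both exploit the identity $F^t=F^{q+((t-q)\bmod p)}$ to reduce to the finitely many iterates $F^0,\ldots,F^{p+q-1}$, apply uniform continuity of each (via compactness of $X$) for a fixed $\varepsilon>0$, and take the minimum of the finitely many resulting radii $\delta_t$. Your write-up merely makes explicit two points the paper leaves implicit, namely the appeal to Heine--Cantor and the reason the minimum stays positive.
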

\begin{proof}
 Let $(X,F)$ be a $(p,q)$-preperiodic DDS with $q\in\Nset$, $p\in\Ns$ and $\varepsilon>0$. Then each iterate $F^t$, for $t\in\Nset$, is uniformly continuous, \ie there exists $\delta_t>0$ such that for all points $x,y\in X$ with $\dist(x,y)<\delta_t$, we have $\dist(F^t(x),F^t(y))<\varepsilon$. Since $F^t=F^{q+(t-q)\bmod p}$, we can define $\delta=\min_{0\le j<p+q}\delta_j$, in such a way that for all generation $t\in\Nset$ and all points $x,y\in X$ with $\dist(x,y)<\delta$, we have $\dist(F^t(x),F^t(y))<\varepsilon$.
\end{proof}

In the specific case of a TDDS $(\Sigma,F)$, the definition of an equicontinuous point $x\in\Sigma$ can be reformalized in terms of the trace as follows: for any $k\in\Nset$, there exists $l\in\Nset$ such that $T\iexp k_F([x\isub l])$ is a singleton. We can further characterize the notion of equicontinuity as follows.
\begin{prop}\label{p:eqtr}
 A TDDS $(\Sigma,F)$ is \ssl{equicontinuous} if and only if all of its \ssl{traces} are \ssl{finite}.
\end{prop}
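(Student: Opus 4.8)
The plan is to prove the two implications after a preliminary reduction to central traces. First I would observe that it suffices to treat the central traces $\tau\iexp k_F$: any window $\co ik$ is contained in a central window $\radi K$ (take $K=\max(\abs i,\abs{k-1})$, or simply $K=k-1$ when $\Mset=\Nset$), and the letterwise restriction $\rho\colon\lang_{2K+1}(\Sigma)\to\lang_{k-i}(\Sigma)$ of a central pattern to the subinterval induces, coordinate by coordinate, a map $\rho^\infty$ with $T_F^{\co ik}=\rho^\infty\circ T\iexp K_F$. Since $\rho^\infty$ is continuous and commutes with $\sigma$, it is a factor map of $\tau\iexp K_F$ onto $\tau_F^{\co ik}$; as factors of finite subshifts are finite and every central trace is itself a trace, \emph{all traces are finite} if and only if \emph{every central trace $\tau\iexp k_F$ is finite}. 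So it remains to show that $(\Sigma,F)$ is equicontinuous iff each $\tau\iexp k_F$ is finite.

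For the direct implication I would suppose $(\Sigma,F)$ equicontinuous and fix $k\in\Nset$. Reading the uniform definition of equicontinuity through the trace reformulation recorded just before the statement, there is a single radius $l\in\Nset$, independent of the point, such that $T\iexp k_F([w])$ is a singleton for every central pattern $w\in\lang(\Sigma)\cap A\iexp l$; equivalently $x\isub l=y\isub l$ forces $F^t(x)\isub k=F^t(y)\isub k$ for all $t$. Thus the value $T\iexp k_F(x)$ depends only on the finite pattern $x\isub l$, of which there are at most $\card{A\iexp l}$; hence the image $\tau\iexp k_F=T\iexp k_F(\Sigma)$ is finite.

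For the converse — which I expect to be the crux — I would suppose every $\tau\iexp k_F$ finite and fix $k$. The trace map $T\iexp k_F\colon\Sigma\to\tau\iexp k_F$ is continuous onto a finite, hence discrete, space, so each fiber $(T\iexp k_F)^{-1}(s)$, $s\in\tau\iexp k_F$, is clopen, and these finitely many fibers partition the compact space $\Sigma$. Being open, each fiber is a union of central cylinders (which form a basis of the topology of $\Sigma$); being compact, it is already a union of finitely many of them, so some radius $l_s$ makes the fiber a union of radius-$l_s$ central cylinders. Setting $l=\max_s l_s$, the fiber containing a configuration $x$ — equivalently the value $T\iexp k_F(x)$ — is determined by $x\isub l$. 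Hence $x\isub l=y\isub l$ implies $T\iexp k_F(x)=T\iexp k_F(y)$, that is, $F^t(x)$ and $F^t(y)$ agree on $\radi k$ for every generation $t$. As $k$ was arbitrary, the trace reformulation yields equicontinuity of $(\Sigma,F)$.

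The single delicate point is this last extraction of a uniform window $l$ from the bare finiteness of the trace: the argument rests on combining continuity of the trace map with compactness of $\Sigma$ and the fact that central cylinders form a basis, so that each clopen fiber becomes decidable from a finite central window and a maximum over the finitely many fibers produces one modulus. Everything else — the reduction to central windows through the factor maps $\rho^\infty$, and the counting in the direct implication — is routine, and the passage between the metric definition of equicontinuity and its trace reformulation is exactly the one recorded above the statement.
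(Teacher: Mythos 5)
Your proof is correct. The forward implication coincides with the paper's: equicontinuity yields one radius $l$ such that $T_F\iexp k([u])$ is a singleton for every $u\in A\iexp l$, so $\tau_F\iexp k$ has at most $\card{A\iexp l}$ elements. The converse is where you genuinely diverge. The paper argues dynamically: a finite subshift is $(p,q)$-preperiodic, so a trace sequence is determined by its first $p+q$ symbols, and the open set $\bigcap_{t<p+q}F^{-t}(\ball\varepsilon{F^t(x)})$ is therefore already a stability neighborhood of $x$; pointwise stability then upgrades to equicontinuity of the system via the compactness remark made earlier in the paper. You argue topologically: the fibers of $T_F\iexp k$, a continuous map onto a finite (hence discrete) image, are clopen, so by compactness each is a finite union of central cylinders, and maximizing over the finitely many fibers gives one window radius $l$ such that $x\isub l$ determines all of $T_F\iexp k(x)$; this hands you the uniform modulus of equicontinuity directly. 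What each approach buys: yours is more self-contained (no appeal to preperiodicity of finite subshifts, nor to the pointwise-to-uniform equivalence), while the paper's makes visible the dynamical mechanism (eventual periodicity of traces) that is reused immediately afterwards, e.g. in Corollary \ref{c:equiprep}. Finally, your preliminary reduction of arbitrary windows $\co ik$ to central ones via letterwise factor maps fills a gap the paper leaves implicit: its proof only treats central traces, while the statement quantifies over all traces, and your remark that every trace is a factor of a central trace is exactly what closes this.
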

\begin{proof}
Let $F$ be an equicontinuous TDDS and $k\in\Nset$. There exists a radius $l\in\Nset$ such that for any $u\in A\iexp l$, $T_F\iexp k([u])$ is a singleton. Consequently, $\tau_F\iexp k=\bigcup_{u\in A\iexp l}T_F\iexp k([u])\le\card{A\iexp l}$.
\\
Conversely, if $\tau_F\iexp k$ is finite, then it is $(p,q)$-preperiodic, for some $p\in\Ns$ and $q\in\Nset$. Any point $x\in\Sigma$ is $\varepsilon$-stable, since any point $y$ of the neighborhood $\bigcap_{t<p+q}F^{-t}(\ball\varepsilon{F^t(x)})$ satisfies $\forall t\in\Nset,\dist(F^t(x),F^t(y))<\varepsilon$.
\end{proof}

In the very particular case of PCA, homogeneity allows to note that if all the cells have preperiodic traces with the same period and preperiod, then the whole configuration is preperiodic. Thus, we can state that a PCA $F$ is preperiodic if and only if each of its traces $\tau\iexp k_F$, with $k\in\Nset$, is finite. The trace of width $1$ being the projection of all other traces, the period and preperiod can be uniformized, which leads to a simple generalization of a classical result over CA or PCA on very particular subshifts \cite{classif,newtopifip}.
\begin{cor}\label{c:equiprep}
Any PCA is \ssl{equicontinuous} if and only if it is \ssl{preperiodic}.
\end{cor}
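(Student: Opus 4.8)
The plan is to establish the two implications separately, leaning on the two preceding propositions and on the shift-invariance of the traces of a PCA. One implication costs nothing extra: since a PCA is in particular a DDS, the passage from preperiodicity to equicontinuity is exactly Proposition \ref{p:prepequi}.

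For the reverse implication I would begin by feeding equicontinuity into Proposition \ref{p:eqtr}, which reformulates it as the statement that every trace of $F$ is finite; in particular the central width-one trace $\tau_F$ is a finite onesided subshift. A finite subshift is a finite DDS, hence $(\tau_F,\sigma)$ is $(p,q)$-preperiodic for some $q\in\Nset$ and $p\in\Ns$, i.e.\ $\sigma^{p+q}=\sigma^q$ on $\tau_F$. It then remains to carry this single pair $(p,q)$ from one cell to the entire configuration. Here I would use the shift-invariance $\tau_F^{\co i{i+1}}=\tau_F$ valid for any PCA: for every configuration $x\in\Sigma$ and every cell $i\in\Mset$, the trace sequence $(F^t(x)_i)_{t\in\Nset}=T_F^{\co i{i+1}}(x)$ lies in $\tau_F$ and therefore satisfies $F^{t+p}(x)_i=F^t(x)_i$ for all $t\ge q$. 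As this holds simultaneously at every cell $i$ and for every $x$, it amounts to $F^{p+q}=F^q$, so $F$ is $(p,q)$-preperiodic.

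The crux is this final uniformization. A priori, finiteness of the traces only delivers, cell by cell, an eventually periodic behavior whose period and preperiod could conceivably vary, so that merging them into a single global pair might fail. Homogeneity is what dissolves the difficulty: by shift-invariance every cell observes one and the same finite trace subshift $\tau_F$, so a single pair $(p,q)$ governs all cells at once, and no supremum over cells (nor over trace widths) need be taken. I expect this uniformization to be the only genuinely PCA-specific ingredient; the rest is a direct assembly of Propositions \ref{p:prepequi} and \ref{p:eqtr} with the elementary fact that a finite subshift is preperiodic.
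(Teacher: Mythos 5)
Your proof is correct and takes essentially the same route as the paper: the forward direction is exactly Proposition \ref{p:prepequi}, and the converse is the paper's own argument, namely Proposition \ref{p:eqtr} giving finiteness (hence preperiodicity) of the width-one trace, followed by the shift-invariance $\tau_F^{\co i{i+1}}=\tau_F$ to apply one single pair $(p,q)$ to every cell at once. Your observation that homogeneity is what makes the cell-by-cell uniformization harmless is precisely the point the paper makes in the paragraph preceding the corollary.
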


Concerning sensitivity, it is not transmitted to any trace, but it is to sufficiently fine traces as shown by the following proposition.
\begin{prop}\label{p:senstr}
 Let $(\Sigma,F)$ be an \ssl{$\varepsilon$-sensitive} TDDS with $\varepsilon\ge2^{-k}$. Then $\tau_F\iexp k$ is a \ssl{sensitive} subshift.
\end{prop}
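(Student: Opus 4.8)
The plan is to prove that the shift acting on $\tau_F\iexp k$ is $\varepsilon'$-sensitive for the \emph{fixed} constant $\varepsilon'=\frac12$; for a one-sided trace this amounts to forcing, arbitrarily close to any prescribed point and at some time, a disagreement in the $0$-th coordinate, since such a disagreement at coordinate $t$ means exactly $\dist(\sigma^t s,\sigma^t s')=1>\varepsilon'$. First I would record two elementary metric facts that drive the whole argument. For configurations $a,b\in\Sigma$ one has $a\isub k=b\isub k$ exactly when $\dist(a,b)<2^{-k}$; and $\dist(a,b)>2^{-k}$ forces a disagreement already inside $\radi{k-1}\subset\radi k$, hence $a\isub k\ne b\isub k$. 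These translate the behaviour of $F$ on the window $\radi k$ into statements about the trace.

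Now fix an arbitrary $s\in\tau_F\iexp k$, choose a preimage $x\in\Sigma$ with $T_F\iexp k(x)=s$, and fix an arbitrary radius $\delta'>0$. I would pick $N\in\Nset$ large enough that any two trace points agreeing on coordinates $0,\ldots,N$ lie within distance $\delta'$. Since each of the finitely many iterates $F^0,\ldots,F^N$ is uniformly continuous (by continuity of $F$ and compactness of $\Sigma$), there is some $\delta_0>0$ such that $\dist(x,y)<\delta_0$ implies $\dist(F^t(x),F^t(y))<2^{-k}$, that is $F^t(x)\isub k=F^t(y)\isub k$, for every $t\le N$.

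It then remains to invoke the hypothesis. Since $x$ is $\varepsilon$-unstable, applying the definition at radius $\delta_0$ yields $y\in\ball{\delta_0}x$ and a generation $\hat t\in\Nset$ with $\dist(F^{\hat t}(x),F^{\hat t}(y))>\varepsilon\ge2^{-k}$. Setting $s'=T_F\iexp k(y)\in\tau_F\iexp k$, the choice of $\delta_0$ gives $s_t=s'_t$ for all $t\le N$, whence $\dist(s,s')<\delta'$ and in particular $\hat t>N$. At time $\hat t$ the second metric fact gives $F^{\hat t}(x)\isub k\ne F^{\hat t}(y)\isub k$, i.e. $s_{\hat t}\ne s'_{\hat t}$, so that $\dist(\sigma^{\hat t}s,\sigma^{\hat t}s')=1>\varepsilon'$. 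As $s$ and $\delta'$ were arbitrary, every point of $\tau_F\iexp k$ is $\varepsilon'$-unstable, which is the asserted sensitivity.

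The one delicate point — and exactly where the assumption $\varepsilon\ge2^{-k}$ is essential — is making the two requirements compatible: the perturbation $y$ must stay aligned with $x$ on the observed window $\radi k$ throughout the first $N$ steps, so that the two trace points start out $\delta'$-close, yet it must eventually diverge \emph{inside} that same window, so that the divergence is actually seen on the width-$k$ trace rather than leaking out to cells with $\abs i>k$. Uniform continuity of the finitely many maps $F^t$ ($t\le N$) secures the first requirement, while the threshold $\varepsilon\ge2^{-k}$ secures the second by localizing the guaranteed divergence inside $\radi{k-1}$. I would finally note that $\hat t\ne0$ is automatic, since $F^0(x)$ and $F^0(y)$ agree on $\radi k$ by construction.
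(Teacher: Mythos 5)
Your proof is correct and takes essentially the same approach as the paper's: sensitivity of $F$ together with the threshold $\varepsilon\ge2^{-k}$ localizes the divergence inside the window $\radi k$, hence produces a disagreement between the two trace points, while their initial closeness in $\tau_F\iexp k$ is secured in advance. The only difference is cosmetic: where the paper simply invokes continuity of the trace application $T_F\iexp k$ to choose the radius $\delta'$, you re-derive that continuity by hand via uniform continuity of the finitely many iterates $F^0,\ldots,F^N$.
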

\begin{proof}
 Let $x\in\Sigma$ and $\delta>0$. By continuity of the trace application, there exists $\delta'>0$ such that for any configuration $y\in\ball{\delta'}x$, we have $\dist(T_F\iexp k(x),T_F\iexp k(y))<\delta$. The sensitivity of $F$ gives a configuration $y\in\ball{\delta'}x$ and a generation $t\in\Nset$ such that $\dist(F^t(x),F^t(y))>\varepsilon$, \ie $F^t(x)\isub k\ne F^t(y)\isub k$. As a result, $T_F\iexp k(x)_t\ne T_F\iexp k(y)_t$, \ie $\dist(\sigma^tT_F\iexp k(x),\sigma^tT_F\iexp k(y))=1$, with $\dist(T_F\iexp k(x),T_F\iexp k(y))<\delta$.
\end{proof}

In the space $\am$, stability can be linked with blocking words, defined as follows: a word $w\in A^*$ is \dfn{$k$-blocking} for the TDDS $(\Sigma,F)$ if there exists $i\in\Nset$ such that $\forall x,y\in[w]_{-i},\forall j\in\Nset,F^j(x)\sco0k=F^j(y)\sco0k$. 

The reader can easily note that a word is \ssl{$k$-blocking} if one of its patterns is, and that any \ssl{$k$-blocking} word is \ssl{$i$-blocking} for all $i\le k$.
Moreover, if $(\Sigma,F)$ is a TDDS and $k\in\Nset$, then a configuration $x\in\Sigma$ is \ssl{$2^{-k}$-stable} if and only if $x\isub l$ is \ssl{$k$-blocking} for some $l\in\Nset$, which brings the following remark.
\start{rem}\label{r:bloqequi}
 \item A TDDS is \ssl{$2^{-k}$-}{sensitive} if and only if it does not admit any \ssl{$k$-blocking} word.
 \item A configuration is \ssl{equicontinuous} if and only if it admits \ssl{$k$-blocking} central patterns for any $k\in\Nset$.
\finish{rem}

But blocking words are especially interesting regarding CA, since a particular width is enough to block all widths. Intuitively, these blocking words will disconnect the underlying space into two different components, preventing future information transfers between them.

If $(\Sigma,F)$ is a PCA of radius $r$, $w$ a $r$-blocking word, $i\in\Nset$ as in the definition, and $x\in[w]_{-i}$, then for any configuration $y\in\Sigma$ with $y\sci{-i}=x\sci{-i}$\resp{$y\sio{\length w-i}=x\sio{\length w-i}$} and any generation $t\in\Nset$, we have $F^t(y)\sci0=F^t(x)\sci0$\resp{$F^t(y)\sio r=F^t(x)\sio r$}.

For instance, in the $\Min$ CA, the word $0$ is $1$-blocking, since $\forall x\in[0],\forall t\in\Nset,F^t(x)_0=0$; its radius being $1$, any space-time diagram containing $0$ can be separated into two parts evolving independently.

\begin{rem}\label{r:bloqbloq}
Let $(\Sigma,F)$ be a PCA of radius $r\in\Nset$, $i,j\in\Mset$, $k,l\ge r$, and $u,v$ two words which are respectively \ssl{$k$-blocking} and \ssl{$l$-blocking}, $i$ and $i'$ the corresponding indices in the words (from the definition).
If the concatenation $uv$ is in the language $\lang(\Sigma)$, then it is \ssl{$\length u-i+j+l$-blocking}.
\end{rem}
This last fact implies the following proposition.
\begin{prop}[K{\r{u}}rka \cite{classif}]
 Let $(\Sigma,F)$ a PCA of radius $r$.
 Then $F$ is \ssl{equicontinuous} if and only if there exists $k\in\Nset$ such that all the words of $A^k$ are \ssl{$r$-blocking}.
\end{prop}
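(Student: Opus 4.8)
The plan is to prove both implications through the blocking‑word reformulation of equicontinuity (Remark \ref{r:bloqequi}), together with the two tools that are specific to PCA: the fact that an $r$‑blocking word shields an entire half‑line (the disconnection property stated just before Remark \ref{r:bloqbloq}) and the fact that blocking words can be concatenated (Remark \ref{r:bloqbloq}). The global property "$F$ is equicontinuous" will be handled point by point, using that a DDS is equicontinuous if and only if all its points are, and that a \emph{configuration} is equicontinuous if and only if it admits $m$‑blocking central patterns for every $m\in\Nset$.

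For the "only if" direction I would start from equicontinuity and extract a \emph{uniform} blocking radius. Running the argument in the proof of Proposition \ref{p:eqtr} with $k=r$ yields a radius $l\in\Nset$ such that for every central word $u\in A\iexp l$ the trace $T_F\iexp r([u])$ is a singleton; equivalently, any two configurations sharing the central pattern $u$ keep agreeing on the window $\cc{-r}r$ at every generation. I would then set $k=2l+1$ and check that every $w\in A^k$ is $r$‑blocking: placing $w$ as a central pattern (index $i=l$, so that $[w]_{-l}$ is the central cylinder) and using $\co0r\subseteq\cc{-r}r$, the singleton condition forces $F^j(x)\sco0r=F^j(y)\sco0r$ for all $x,y\in[w]_{-l}$ and all $j$. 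Words of $A^k$ outside $\lang(\Sigma)$ are blocking vacuously, so this $k$ works.

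For the "if" direction, assume every word of $A^k$ is $r$‑blocking. Since a word is $r$‑blocking as soon as one of its factors is, every word of length at least $k$ is $r$‑blocking as well. Fix a configuration $x\in\Sigma$ and a target width $m\in\Nset$; the aim is to exhibit an $m$‑blocking central pattern of $x$. Cutting a long central pattern $x\isub l$ into consecutive $r$‑blocking factors and gluing them by repeated use of Remark \ref{r:bloqbloq}, the blocking width of $x\isub l$ grows with $l$, so for $l$ large enough $x\isub l$ is $m$‑blocking. Hence $x$ admits $m$‑blocking central patterns for every $m$ and is equicontinuous by Remark \ref{r:bloqequi}; as $x$ was arbitrary, $F$ is equicontinuous.

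The main obstacle is precisely this gluing step, and conceptually it is the reason a bounded blocking width suffices for PCA but not for a general TDDS. A lone $r$‑blocking factor only freezes the future of \emph{one} half‑line — from the left or from the right, according to the two versions (the \emph{resp.}) of the disconnection property — so to freeze a central window of width $m$ one must combine a left‑shielding occurrence and a right‑shielding occurrence of an $r$‑blocking word and argue that the finite pattern trapped between them is determined by $x\isub l$. Making this rigorous amounts to tracking the blocking index through the concatenation of Remark \ref{r:bloqbloq} and verifying that the index stays bounded while the blocked width increases without bound; by contrast, the "only if" direction and the routine window‑alignment checks (such as $\co0r\subseteq\cc{-r}r$ and the distance convention fixing which cells a given radius controls) are straightforward.
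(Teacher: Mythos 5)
Your proposal is correct and takes essentially the paper's route: the paper offers no detailed proof of this proposition, only the remark that the concatenation property (Remark \ref{r:bloqbloq}) implies it, and your argument — uniform equicontinuity as in the proof of Proposition \ref{p:eqtr} for the forward direction, iterated concatenation of $r$-blocking factors together with Remark \ref{r:bloqequi} and the pointwise characterization of equicontinuity for the converse — is precisely the natural expansion of that one-line justification. The gluing subtlety you flag is genuine but resolves exactly as you outline: the blocked window $\co 0r$ is one-sided, yet under concatenation the blocking index stays bounded (by $k$) while the blocked width grows linearly in the number of factors, so after re-centering it eventually covers any prescribed central window $\cc{-m}{m}$.
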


Another consequence of Remark \ref{r:bloqbloq} is that we can insert any word between two concatenated words and obtain arbitrarily wide blocking words in any cylinder. We obtain the following theorem, equivalent to a result in \cite{classif}.
\begin{thm}\label{t:quasieq}
 Let $(\Sigma,F)$ a PCA of radius $r$ on some transitive subshift. The following statements are equivalent:
 \begin{enumerate}
  \item $F$ is almost equicontinuous;
  \item $F$ is not $2^r$-sensitive;
  \item $F$ admits some $r$-blocking word.
 \end{enumerate}
\end{thm}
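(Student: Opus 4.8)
The plan is to prove the chain of implications $(1)\Rightarrow(2)\Rightarrow(3)\Rightarrow(1)$, observing first that $(2)$ and $(3)$ are directly interchangeable: Remark~\ref{r:bloqequi} applied with $k=r$ states that failure of sensitivity at the threshold appearing in $(2)$ is equivalent to the existence of an $r$-blocking word, so $(2)\Leftrightarrow(3)$ needs no further argument. For $(1)\Rightarrow(2)$, I would exploit that $\Sigma$ is a nonempty compact metric space, hence a Baire space: the residual set of equicontinuous points supplied by almost equicontinuity is then dense, in particular nonempty, and any equicontinuous point is $\varepsilon$-stable for every $\varepsilon>0$, hence stable at that threshold, which rules out the sensitivity of $(2)$.

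The heart of the matter is $(3)\Rightarrow(1)$. By the second item of Remark~\ref{r:bloqequi}, a configuration is equicontinuous exactly when it admits a $k$-blocking central pattern for every $k\in\Nset$. Letting $E_k$ denote the set of configurations of $\Sigma$ that admit some $k$-blocking central pattern, the equicontinuous points thus form $\bigcap_{k\in\Nset}E_k$. Each $E_k$ is open: if $x\isub l$ is $k$-blocking, then every configuration sharing this central pattern belongs to $E_k$, so $E_k$ is a union of central cylinders. Hence it suffices to prove that every $E_k$ is dense; the intersection $\bigcap_{k}E_k$ is then a countable intersection of dense open sets, that is, a residual set, which is precisely almost equicontinuity.

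To show $E_k$ dense, fix $k$ and an arbitrary cylinder $[u]_i$ with $u\in\lang(\Sigma)$, and construct a configuration of $\Sigma\cap[u]_i$ carrying a $k$-blocking central pattern. The mechanism is to flank the origin with two occurrences of the $r$-blocking word $w$ used as walls, one far to the left of cell $0$ and one far to the right, at mutual distance at least $k$. By the one-sided blocking property stated for PCA just before the $\Min$ example, a left occurrence of $w$ bars any information coming from further left from reaching the cells to its right, and symmetrically for a right occurrence; consequently two configurations agreeing on the finite central pattern that spans both walls keep identical traces on all cells strictly between the walls, at every generation. This makes that central pattern $k$-blocking, its blocking width being certified by Remark~\ref{r:bloqbloq}. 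Finally, transitivity of $\Sigma$ lets me realize such a configuration inside $\Sigma$ while keeping $u$ at position $i$: it provides, in $\lang(\Sigma)$, connecting words gluing $w$ to $u$ and to the further copies of $w$, and the resulting finite word extends to a configuration of $\Sigma$ lying in $E_k\cap[u]_i$.

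The main obstacle is precisely this construction. Two points need care: that a central pattern bracketed by two copies of $w$ is truly $k$-blocking, which requires combining the two one-sided blocking statements and tracking the resulting width through Remark~\ref{r:bloqbloq}; and that transitivity indeed yields, for each target cylinder, such a bracketed configuration of $\Sigma$. The attendant index bookkeeping is routine once these remarks are in hand. The onesided case $\Mset=\Nset$ is analogous and in fact simpler, since only one side must be blocked.
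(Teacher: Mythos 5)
Your proof is correct, and its engine is the same as the paper's: transitivity yields dense open sets of configurations containing the blocking word, Remark~\ref{r:bloqbloq} (in the extended form stated just before the theorem, allowing an arbitrary word to be inserted between two blocking words) produces arbitrarily wide blocking words, and Remark~\ref{r:bloqequi} together with Baire's theorem turns this into a residual set of equicontinuous points; the remaining implications are immediate, exactly as you argue. The packaging differs, though, and in a way that matters. The paper exhibits a single residual set $\bigcap_{l\in\Nset}U_l$ with $U_l=\bigcup_{j>l}[u]_j$ and asserts that its elements are equicontinuous, whereas you prove that each set $E_k$ of configurations admitting a $k$-blocking central pattern is open and dense, and then intersect over $k$. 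Your version, by insisting on walls on \emph{both} sides of the origin, is the one that handles twosided PCA correctly: the paper's $U_l$ only records occurrences of $u$ at positions $j>l\ge0$, i.e.\ to the right of the origin, and a configuration whose occurrences all lie to the right need not be equicontinuous. For instance, for the CA on $\deux^\Zset$ given by $F(x)_i=\min(x_{i-1},x_i)$, the word $0$ is $1$-blocking, and a configuration equal to $1$ on every cell $i\le0$ but containing infinitely many $0$s to the right belongs to every $U_l$; yet a single $0$ placed at cell $-N$ changes the trace of cell $0$ from time $N$ on, so this configuration is not even $\frac12$-stable. The paper's argument thus needs the harmless fix of requiring occurrences arbitrarily far on both sides, which your density construction performs automatically. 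One caveat on your write-up: the one-sided blocking statements compare configurations agreeing on a half-line, and combining the two of them naively would require a hybrid configuration that a general transitive subshift need not contain; the clean justification that agreement on the span between the two walls forces equal traces strictly between them is the absoluteness of the blocking definition (all configurations of $[w]_{-i}$ share the same trace on the blocked cells), propagated by induction on time --- which is exactly what Remark~\ref{r:bloqbloq} encapsulates, and since you ultimately certify the width by that remark, this is a presentational point rather than a gap.
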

\start{proof}
 \item Assume that $u\in A^*$ is a $k$-blocking word for $F$, with $k\ge r$, and let us show that the set of equicontinuous configurations is a residual.
By transitivity of $\Sigma$, the open set $U_l=\bigcup_{j>l}[u]_j$ is dense. Thanks to the Remarks \ref{r:bloqbloq} and \ref{r:bloqequi}, the configurations of the intersection $\bigcap_{l\in\Nset}U_l$ are equicontinuous.
 \item The other implications directly come from the definitions.
\finish{proof}

\section{Limit set}\label{s:ls}

The previous section has presented many different possible ways to study dynamics of DDS and some first results about them. However, these notions can be very sensitive to the transient time (if we modify the initial evolution during a short time). To overcome this problem and characterize the core behavior of the DDS, one idea is to consider only the points that can appear arbitrarily late inside the DDS. This corresponds to the limit set. Formally, if $(X,F)$ is a DDS and $X'\subset X$, we note ${\Omega_F(X')}=\bigcap_{j\in\Nset}\cl{\orb_F(F^j(X'))}$. In case of an invariant set (such as $X$), the definition gets simpler.

\begin{defi}
  Let $(X,F)$ a DDS. The \dfn{limit set} of a DDS $(X,F)$ is $\Omega_F=\Omega_F(X)=\bigcap_{j\in\Nset}F^j(X)$. 
\end{defi}

 The limit set of the $\Min$ CA is the set of the configurations where all the $1$s are connected:
 \[\Omega_{\Min}=\set x{\deux^\Zset}{\forall i\in\Zset,x_i=0\impl\forall j<i,x_j=0\oub\forall j>i,x_j=0}~.\]

\subsection{Limit set of DDS}

One first easy remark is that the limit set is always a closed nonempty set, as a decreasing intersection of nonempty closed subsets. 

The limit set corresponds to the largest \ssl{surjective subsystem}; in particular $\Omega_F=X$ if and only $F$ is \ssl{onto}. Let us now define an \dfn{attractor} as a set that attracts neighboring points or, formally, a nonempty closed $F$-invariant subset $Y$ of $X$ such that for any $\varepsilon>0$, there exists $\delta>0$ such that for any point $x\in X$ with $\dist(x,Y)<\delta$, we have $\lim_{j\to\infty}\dist(F^j(x),Y)=0$ and for any generation $j\in\Nset$, $\dist(F^j(x),Y)<\varepsilon$. The limit set is then the \ssl{maximal attractor}. Using this characterization and the compactness of the underlying space, it can be shown that any neighborhood is reached in a finite time, \ie $\max_{x\in X}\dist(F^j(x),\Omega_F)\tnd[j]0$. In the case where the limit set is reached in a finite uniform time, \ie there exists a generation $j\in\Nset$ such that $F^j(X)=\Omega_F$, we say that the DDS $(X,F)$ is \dfn{stable}.

If $(X_1,F)$ and $(X_2,F)$ are two subsystems of $(X,F)$ such that $X_1\cup X_2=X$, then $\Omega_F=\Omega_F(X_1)\cup\Omega_F(X_2)$.

We can build, using the limit set, similar notions to those we already introduced.
\begin{defi}\label{d:enslim}
  Let $(X,F)$ be a DDS. It is said \dfn{$z$-limit-nilpotent} for some $z\in X$, if $\Omega_F$ is the singleton $\{z\}$. It is said \dfn{$p$-limit-periodic}, with $p\in\Ns$, if $F\restr\Omega$ is \ssl{$p$-periodic} (\ie, $F\restr\Omega^p=\textrm{Id}$).
\end{defi}   

 A first remark is that any $z$-nilpotent DDS is $z$-limit nilpotent, and similarly preperiodic DDS are limit-periodic, but the converse is not always true. Nevertheless these new notions represent some highly stable behavior, as seen in the following proposition.
\begin{prop}\label{p:lnilpeq}
 Any \ssl{limit-nilpotent} DDS is \ssl{equicontinuous}.
\end{prop}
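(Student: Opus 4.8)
The plan is to mirror the proof of Proposition~\ref{p:prepequi}: I would split the set of generations into a finite initial segment, handled by uniform continuity, and a tail, handled by the fact that the whole space is eventually pushed close to the single limit point $z$.

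First I would record the key input, already available in the excerpt: since $\Omega_F=\{z\}$, the statement $\max_{x\in X}\dist(F^j(x),\Omega_F)\tnd[j]0$ reads $\max_{x\in X}\dist(F^j(x),z)\tnd[j]0$, that is, the iterates $F^j$ converge \emph{uniformly} to the constant map equal to $z$. Fix $\varepsilon>0$. By this uniform convergence there is a generation $N\in\Nset$ such that $\dist(F^t(x),z)<\varepsilon/2$ for every $x\in X$ and every $t\ge N$.

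Then I would treat the two ranges of $t$ separately. For $t\ge N$ and arbitrary $x,y\in X$, the triangle inequality gives $\dist(F^t(x),F^t(y))\le\dist(F^t(x),z)+\dist(z,F^t(y))<\varepsilon$, with no hypothesis needed on $\dist(x,y)$. For the finitely many generations $t<N$, each $F^t$ is continuous on the compact space $X$, hence uniformly continuous, so there is $\delta_t>0$ with $\dist(x,y)<\delta_t\impl\dist(F^t(x),F^t(y))<\varepsilon$. Setting $\delta=\min_{0\le t<N}\delta_t>0$, any pair $x,y$ with $\dist(x,y)<\delta$ satisfies $\dist(F^t(x),F^t(y))<\varepsilon$ for all $t\in\Nset$, which is exactly equicontinuity.

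There is no real obstacle here beyond making sure the convergence toward $z$ is uniform rather than merely pointwise. That uniformity is precisely the quoted consequence of compactness, so it can be taken for granted; were it unavailable, one would re-derive it from the nested description $\Omega_F=\bigcap_j F^j(X)$ together with a finite-subcover argument, as the decreasing compact sets $F^j(X)$ shrink into the single point $z$.
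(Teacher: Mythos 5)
Your proof is correct and follows essentially the same route as the paper's: both arguments obtain from $\Omega_F=\{z\}$ the uniform attraction $\max_{x\in X}\dist(F^j(x),z)\tnd[j]0$, fix a generation beyond which the triangle inequality through $z$ bounds $\dist(F^t(x),F^t(y))$ by $\varepsilon$, and handle the finitely many earlier generations by continuity. The only cosmetic difference is that you use uniform continuity of each $F^t$ on the compact space to produce one $\delta$ valid for all pairs (obtaining the uniform definition of equicontinuity directly, exactly as in Proposition~\ref{p:prepequi}), whereas the paper builds, for each point $x$, the open neighborhood $\bigcap_{0\le j<J}F^{-j}(\ball\varepsilon{F^j(x)})$, concludes that every point is $\varepsilon$-stable, and implicitly invokes the compactness remark that a DDS is equicontinuous if and only if all of its points are.
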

\begin{proof}
  Let $(X,F)$ be a $z$-limit-nilpotent DDS for some $z\in X$, and $\varepsilon>0$. There exists a generation $J\in\Nset$ such that $\forall j\ge J,\max_{x\in X}\dist(F^j(x),z)<\frac\varepsilon2$. Hence, for any point $x\in X$ and any point $y$ of the open set $\bigcap_{0\le j<J}F^{-j}(\ball\varepsilon{F^j(x)})$, we have by construction $\dist(F^j(x),F^j(y))<\varepsilon$ for $j<J$ and $\dist(F^j(x),F^j(y))<\frac\varepsilon2+\frac\varepsilon2$. It results that $x$ is $\varepsilon$-stable.
\end{proof}

Moreover, we can remark that a system is $p$-limit-periodic if and only if its limit set is the set of its $p$-periodic points.

The limit set of a subsystem is included in that of the whole system. Hence any subsystem of a limit-nilpotent\resp{limit-periodic} is limit-nilpotent\resp{limit-periodic}. Moreover, the limit set is preserved under iteration and factor map. Thus, we can see that if $\Phi$ is a simulation by a DDS $(X,F)$ of another $(Y,G)$ then $\Omega_G(Y) \subseteq \Phi(\Omega_F(X))$, and we have the following.
\begin{prop}\label{p:limsim}
Let $\Phi:X\to Y$ be a \ssl{complete simulation} by a DDS $(X,F)$ of another $(Y,G)$.
Then $\Phi(\Omega_F(X))=\Omega_G(Y)$.
\end{prop}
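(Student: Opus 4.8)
The plan is to reduce the claim to the two facts recorded just above the statement: that the limit set is unchanged under passing to an iterate ($\Omega_{F^n}=\Omega_F$ for every $n\in\Ns$) and that it is preserved by factor maps. The first holds because the sequence $(F^j(X))_{j}$ is decreasing, so extracting the subsequence $(F^{nj}(X))_{j}$ does not alter the intersection; the second is the substantive input, where compactness is used.

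First I would unfold the word \emph{complete}. By definition a simulation of period $n$ by steps of $n'$ is a factor map of a subsystem $(X',F^n)$ of $(X,F^n)$ into $(Y,G^{n'})$, and completeness means $X'=X$. Hence $\Phi$ is a surjective continuous map on the whole of $X$ with $\Phi F^n=G^{n'}\Phi$, i.e.\ a factor map of $(X,F^n)$ onto $(Y,G^{n'})$. This is exactly the feature that is lost in the general case: when $X'\subsetneq X$ one only obtains $\Omega_G(Y)=\Phi(\Omega_{F^n}(X'))\subseteq\Phi(\Omega_F(X))$, whereas with $X'=X$ the inclusion becomes an equality.

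Then I would simply chain the two facts. Preservation under factor maps applied to $\Phi:(X,F^n)\to(Y,G^{n'})$ gives $\Phi(\Omega_{F^n}(X))=\Omega_{G^{n'}}(Y)$; preservation under iteration on each side gives $\Omega_{F^n}(X)=\Omega_F(X)$ and $\Omega_{G^{n'}}(Y)=\Omega_G(Y)$. Substituting yields the desired $\Phi(\Omega_F(X))=\Omega_G(Y)$.

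The only place where any work is hidden is the factor-map preservation itself. Should it need to be reproved here, I would argue by compactness: writing $K_j=F^{nj}(X)$, the $K_j$ form a decreasing family of compacts, and for a continuous $\Phi$ one has $\Phi(\bigcap_j K_j)=\bigcap_j\Phi(K_j)$. The inclusion $\subseteq$ is immediate, and for $\supseteq$ any $y$ lying in every $\Phi(K_j)$ yields a decreasing family of nonempty compacts $\Phi^{-1}(y)\cap K_j$ whose intersection is nonempty, producing a preimage of $y$ inside $\bigcap_j K_j$. Finally $\Phi(K_j)=\Phi(F^{nj}(X))=G^{n'j}(\Phi(X))=G^{n'j}(Y)$ by the intertwining relation and surjectivity, so $\bigcap_j\Phi(K_j)=\Omega_{G^{n'}}(Y)$. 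I do not expect a genuine obstacle beyond keeping the indices $nj$ and $n'j$ aligned through the commutation $\Phi F^n=G^{n'}\Phi$.
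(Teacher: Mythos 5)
Your proof is correct and follows essentially the same route as the paper's: reduce the complete simulation to a factor map $\Phi:(X,F^n)\to(Y,G^{n'})$, use the intertwining relation to get $\Phi(F^{nj}(X))=G^{n'j}(Y)$, commute $\Phi$ with the decreasing intersection of compacts, and conclude with the iterate invariance $\Omega_{F^n}=\Omega_F$, $\Omega_{G^{n'}}=\Omega_G$ coming from decreasingness of the image sequence. The only difference is one of detail: you spell out the compactness argument (via the nonempty decreasing compacts $\Phi^{-1}(y)\cap K_j$) that the paper compresses into the phrase ``since it is a decreasing intersection.''
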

\begin{proof}
Let $\Phi$ be a factor map.
For any $j\in\Nset$, $\Phi F^j(X)=G^j(Y)$, hence $\Phi(\bigcap_{j\in\Nset}F^j(X))=\bigcap_{j\in\Nset}G^j(Y)$ since it is a decreasing intersection; hence $\Phi(\Omega_F)=\Omega_G$.
Moreover, decreasingness of the sequence clearly gives $\Omega_{F^k}=\Omega_F$ for any $k\in\Ns$.
\end{proof}

\subsection{Limit set of subshifts}\label{d:limsss}

Here, we will study what happens when we are in the case of TDDS or, in particular, of onesided subshifts (obviously, the limit system of a twosided subshift is the whole subshift). For a onesided subshift $\Sigma$, we will more conveniently note ${\Omega_\Sigma}=\Omega_\sigma(\Sigma)$.

 For the label system of a graph, the limit set can be read by removing the inaccessible vertices (until none remain). More formally, the \ssl{limit set} of the \ssl{label system} of a given graph is the subgraph composed of vertices that are \ssl{accessible} by an infinite path. In this case, we have the property that \ssl{sofic} subshifts are \ssl{stable}. Actually, as soon as the limit set is an SFT, the next proposition and corollary show that it is reached in finite time.
\begin{prop}\label{p:limsft} 
Any subshift having a \ssl{limit set} of \ssl{finite type} is \ssl{stable}. 
\end{prop}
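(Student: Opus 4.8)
The twosided case is immediate, since there $\sigma$ is onto, so $\Omega_\Sigma=\Sigma$ is reached at generation $0$; I therefore treat a onesided subshift $\Sigma\subseteq\an$. The plan is to exploit that the iterated images $\sigma^j(\Sigma)$ form a \emph{decreasing} sequence of subshifts whose intersection is $\Omega_\Sigma$, and to show that this sequence must stabilize as soon as its order-$k$ languages do, where $k$ is the order of the SFT $\Omega_\Sigma$. First I would record that each $\sigma^j(\Sigma)$ is a subshift (closed by compactness and continuity of $\sigma$, and $\sigma$-invariant since $\sigma$ commutes with itself) and that $\sigma^{j+1}(\Sigma)\subseteq\sigma^j(\Sigma)$ because $\Sigma$ is $\sigma$-invariant; hence $\bigcap_{j\in\Nset}\sigma^j(\Sigma)=\Omega_\Sigma$ is a nested intersection of subshifts.

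Fix $k\in\Nset$ with $\Omega_\Sigma$ a $k$-SFT. Without loss of generality its forbidden language is $A^k\setminus\lang_k(\Omega_\Sigma)$, so a configuration lies in $\Omega_\Sigma$ precisely when all its length-$k$ factors belong to $\lang_k(\Omega_\Sigma)$. Now the order-$k$ languages $\lang_k(\sigma^j(\Sigma))\subseteq A^k$ form a decreasing sequence of subsets of the finite set $A^k$, so they stabilize: there is $N\in\Nset$ with $\lang_k(\sigma^j(\Sigma))=\lang_k(\sigma^N(\Sigma))$ for all $j\ge N$. The step that requires care, and where I expect the only real work, is identifying this stable value with $\lang_k(\Omega_\Sigma)$. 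One inclusion is immediate from $\Omega_\Sigma\subseteq\sigma^j(\Sigma)$. For the converse, given $u\in A^k$ lying in every $\lang_k(\sigma^j(\Sigma))$, shift-invariance lets me choose $y^{(j)}\in\sigma^j(\Sigma)$ with $y^{(j)}\sco0k=u$; by compactness a limit point $y$ of the sequence $(y^{(j)})_j$ lies in each $\sigma^j(\Sigma)$, hence in $\Omega_\Sigma$, and satisfies $y\sco0k=u$, so $u\in\lang_k(\Omega_\Sigma)$. This is simply the general fact that order-$k$ languages pass to decreasing intersections of subshifts.

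It then remains to conclude that $\sigma^N(\Sigma)=\Omega_\Sigma$, which is exactly stability. The inclusion $\Omega_\Sigma\subseteq\sigma^N(\Sigma)$ always holds. Conversely, any $y\in\sigma^N(\Sigma)$ has all its length-$k$ factors in $\lang_k(\sigma^N(\Sigma))=\lang_k(\Omega_\Sigma)$, so $y$ avoids the forbidden language $A^k\setminus\lang_k(\Omega_\Sigma)$ and therefore lies in $\Omega_\Sigma$. Hence $\sigma^N(\Sigma)=\Omega_\Sigma$, the limit set is reached at the uniform finite generation $N$, and $(\Sigma,\sigma)$ is stable. The main obstacle is the middle step, namely the compactness argument ensuring that stabilization of the finite order-$k$ language genuinely detects the limit set; the SFT hypothesis is precisely what lets this order-$k$ information determine membership in $\Omega_\Sigma$, and everything else is bookkeeping.
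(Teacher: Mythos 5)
Your proof is correct, and its endgame coincides with the paper's: once some image $\sigma^N(\Sigma)$ is known to have all its length-$k$ factors in $\lang_k(\Omega_\Sigma)$, shift-invariance of $\sigma^N(\Sigma)$ together with the hypothesis that $\Omega_\Sigma$ is a $k$-SFT (equivalently, $\Omega_\Sigma=\forb{A^k\setminus\lang_k(\Omega_\Sigma)}$, as you correctly note) forces $\sigma^N(\Sigma)\subseteq\Omega_\Sigma$, hence equality and stability. Where you genuinely diverge is in how that finite $N$ is produced. The paper observes that the cylinder $[\lang_k(\Omega_\Sigma)]$ is an open neighborhood of $\Omega_\Sigma$ and invokes its earlier remark that every neighborhood of the maximal attractor is reached in uniform finite time (\ie $\max_{x}\dist(\sigma^j(x),\Omega_\Sigma)\to0$), a general fact about DDS whose proof is itself a compactness argument. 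You instead stay entirely inside symbolic dynamics: the order-$k$ languages $\lang_k(\sigma^j(\Sigma))$ form a decreasing sequence of subsets of the finite set $A^k$, hence stabilize by pigeonhole, and your compactness extraction of a limit configuration identifies the stable value with $\lang_k(\Omega_\Sigma)$. The two routes cost about the same: the paper's is shorter given the remark it can cite, and that remark applies to any attractor neighborhood in any DDS; yours is self-contained, makes the combinatorial content (finiteness of $A^k$) explicit, and dispatches the twosided case inside the proof rather than leaving it to the surrounding text as the paper does.
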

\begin{proof} Let $\Sigma$ be a subshift such that $\Omega_\Sigma$ is an SFT of order $k\in\Nset$. $[\lang_k(\Omega_\Sigma)]$ is a neighborhood of $\Omega_\Sigma$, so it is reached in finite time (from previous remarks): there exists a generation $t\in\Nset$ for which $\sigma^t(\Sigma)$ is included in $[\lang_k(\Omega_\Sigma)]$.
Being a subshift, $\sigma^t(\Sigma)$ must also be included in $\bigcap_{j\in\Nset}\sigma^j([\lang_k(\Omega_\Sigma)])$, which is exactly $\Omega_\Sigma$ since it is an SFT of order $k$.
\end{proof} 

\begin{cor}\label{c:limsft}
 A subshift is \ssl{finite}\resp{of \ssl{finite type}} if and only if its \ssl{limit set} is.
\end{cor}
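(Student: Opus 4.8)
The plan is to prove the two equivalences separately, in each case using that Proposition~\ref{p:limsft} lets us replace the limit set by a genuine finite-time image $\sigma^t(\Sigma)$, so that the few cells lost at each iteration can be controlled by hand. Throughout I work with a onesided subshift $\Sigma$ (for a twosided one $\Omega_\Sigma=\Sigma$ and the statement is empty).

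For the finite case, the forward implication is immediate: since $\sigma(\Sigma)\subseteq\Sigma$, the sequence $(\sigma^j(\Sigma))_{j}$ is decreasing, hence $\Omega_\Sigma\subseteq\Sigma$ and finiteness is inherited. For the converse I would first recall that a finite subshift is an SFT (finitely many eventually periodic configurations are cut out by finitely many forbidden patterns). Thus if $\Omega_\Sigma$ is finite, Proposition~\ref{p:limsft} applies and $\Sigma$ is stable: there is $t\in\Nset$ with $\sigma^t(\Sigma)=\Omega_\Sigma$. The map $\sigma^t:\Sigma\to\Omega_\Sigma$ is onto, and each fibre is finite, since $\sigma^t(x)=\sigma^t(x')$ forces $x$ and $x'$ to coincide except possibly on the $t$ initial cells, leaving at most $\card A^t$ possibilities. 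A surjection onto a finite set with finite fibres has finite domain, so $\Sigma$ is finite.

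For the finite type case, the forward implication is the graph reading recalled at the beginning of Section~\ref{d:limsss}. I would present the $k$-SFT $\Sigma$ by the canonical graph $\g$ on vertex set $\lang_{k-1}(\Sigma)$, whose infinite paths are labelled exactly by the configurations of $\Sigma$, the label of a path being essentially its vertex sequence. Removing the vertices not accessible by arbitrarily long incoming paths -- equivalently, not reachable from a cycle -- yields a subgraph $\g'$ whose label system is precisely $\Omega_\Sigma$; being again the vertex shift of a finite graph, it is an SFT. The converse is the delicate part. By Proposition~\ref{p:limsft}, $\Omega_\Sigma$ being an SFT of some order $k$ makes $\Sigma$ stable, say $\sigma^t(\Sigma)=\Omega_\Sigma$. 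A first routine observation is that the length-$k$ suffixes of the words of $\lang_{t+k}(\Sigma)$ are exactly the words of $\lang_k(\Omega_\Sigma)$; combined with the $k$-SFT property of $\Omega_\Sigma$, this shows that for any configuration $x$ all of whose length-$(t+k)$ factors lie in $\lang_{t+k}(\Sigma)$, the tail $\sigma^t(x)$ already belongs to $\Omega_\Sigma$. Hence membership of $x$ in $\Sigma$ is entirely governed by whether its $t$ initial cells form an \emph{admissible prefix} for $\sigma^t(x)\in\Omega_\Sigma$, and the goal becomes to prove that the set of admissible prefixes $P(y)=\set u{A^t}{\exists w\in\Sigma,\ w\sco0t=u\ \oub \sigma^t(w)=y}$ of a tail $y\in\Omega_\Sigma$ depends only on $y\sco0k$; this local dependence is equivalent to $\Sigma$ being an SFT of order $t+k$.

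The main obstacle is exactly this last point: showing the admissible prefixes are determined by a bounded window of the tail. Stability guarantees that every $y\in\Omega_\Sigma$ admits at least one admissible prefix, and the $k$-SFT structure of $\Omega_\Sigma$ lets one interpolate between two tails $y,y'$ agreeing on their first $k$ cells by configurations of $\Omega_\Sigma$ that coincide with $y'$ on longer and longer prefixes. I would feed such an interpolating sequence, carrying a fixed admissible prefix $u$, into the \emph{closed} set $\Sigma$: checking that each approximant remains in $\Sigma$ and passing to the limit gives $u$ admissible for $y'$, whence $P(y)\subseteq P(y')$ and, by symmetry, equality. Here the finiteness of $A^t$ and the shift-invariance of $\Sigma$ -- which forbids any genuinely position-dependent constraint -- are what make the closedness argument succeed; verifying that such far-out modifications of the tail preserve membership in $\Sigma$ is the crux of the whole proof. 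Combining the two equivalences yields Corollary~\ref{c:limsft}.
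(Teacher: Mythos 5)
Your finite case is correct and coincides with the paper's proof: downward, finiteness is inherited because $\Omega_\Sigma\subseteq\Sigma$; upward, you note a finite limit set is an SFT, invoke Proposition~\ref{p:limsft} to get stability $\sigma^t(\Sigma)=\Omega_\Sigma$, and bound $\card\Sigma\le\card{A^t}\,\card{\Omega_\Sigma}$ by the finite-fibre argument. The forward SFT implication via the label-system reading is also fine and is exactly the paper's "previous remark". The problem is the converse SFT implication, and it sits precisely at the step you yourself call the crux: that the set $P(y)$ of admissible prefixes of a tail $y\in\Omega_\Sigma$ depends only on $y\sco0k$. You reduce the statement to this claim (the reduction is valid), but you never prove the claim; you only assert that shift-invariance of $\Sigma$, finiteness of $A^t$ and closedness will make an interpolation argument succeed. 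Already as sketched it does not: to glue a long prefix of $y'$ onto a tail inside the $k$-SFT $\Omega_\Sigma$ you need agreement on a length-$(k-1)$ window \emph{at the gluing site}, which agreement at the origin does not give; and even granted interpolants $z^n\in\Omega_\Sigma$ with $z^n\to y'$, nothing in the hypotheses forces $uz^n\in\Sigma$.

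This gap cannot be filled, because the locality claim --- and with it the SFT half of the corollary, for onesided subshifts with the paper's definitions --- is false. Over $A=\deux$, let $\Omega=\forb{\{11\}}$ be the onesided golden mean shift, a $2$-SFT, and set
\[\Sigma\ =\ \Omega\ \cup\ \sett{110^p1z\in\an}{p\textrm{ prime},\ 0^p1z\in\Omega}\ \cup\ \{11\uinf0\}~.\]
This $\Sigma$ is closed (the only extra accumulation point is $11\uinf0$) and $\sigma$-invariant, and $\sigma(\Sigma)=\Omega$ (the extra configurations fall into $\Omega$ after one shift, and $\sigma(\Omega)=\Omega$ since one can prepend $0$); hence $\Sigma$ is stable and $\Omega_\Sigma=\Omega$ is an SFT. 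Yet $\Sigma$ is an SFT of no order $N$: taking a composite $q\ge N$, every length-$N$ factor of $x=110^q1\uinf0$ lies in $\lang(\Sigma)$ (each occurs in $11\uinf0$ or in $\Omega$), so $x$ avoids any forbidden language made of words of length at most $N$, while $x\notin\Sigma$. In your notation ($t=1$, $k=2$): $y=10^61\uinf0$ and $y'=10^71\uinf0$ agree on far more than their first $k$ cells, yet $1\in P(y')\setminus P(y)$. The underlying reason your intuition fails is that a onesided subshift is only \emph{forward} invariant: position $0$ is genuinely special, so $\Sigma$ may tie the initial cells to unboundedly distant information, and this constraint is simply erased (not propagated) by the shift. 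Note finally that you have put your finger on a defect of the paper itself: its proof disposes of this direction with "it is then immediate that $\Sigma$ is a $(k+j)$-SFT", which is exactly the unproved (and, by the above, untrue) locality claim; only the finite half of the corollary --- the half actually used later, e.g.\ in Proposition~\ref{p:limper} --- stands.
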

\begin{proof}
  Let $\Sigma\subset\am$ be a subshift such that $\Omega_\Sigma$ is an SFT of order $k\in\Ns$. By proposition \ref{p:limsft}, there exists a generation $j\in\Nset$ such that $\sigma^j(\Sigma)=\Omega_\Sigma$. It is then immediate that $\Sigma$ is a $(k+j)$-SFT and that $\card\Sigma\le\card{A^j}\card{\Omega_\Sigma}$. The converse is immediate by the previous remark on the limit set of a label system.
\end{proof}

In other words, the \ssl{limit-periodic}\resp{\ssl{limit-nilpotent}} subshifts are exactly the \ssl{preperiodic}\resp{\ssl{nilpotent}} subshifts. The argument of the previous proof cannot be adapted to sofic subshifts, as shown by the following counter-example: the subshift $\sett{0^k1^l\uinf0}{k\le l}+0^*\uinf1$ is not sofic, even though its limit set $1^*\uinf0+0^*\uinf1$ is sofic. 

\subsection{Limit set of cellular automata}

In the case of (partial) cellular automata or TDDS, the particular structure allows stronger results. Nevertheless, it is not completely understood, as suggests the attempt to characterize the possible limit sets of CA in \cite{soflim}, or more generally the possible subshift attractors in \cite{sssattr}, also linked to \cite{classifbass}. Generally, it is known that the limit sets of CA can be rather complex \cite{rice,lrice}.

First note that the \ssl{limit set} of a PCA $(\Sigma,F)$ is a \ssl{subshift}, as an intersection of subshifts. Moreover, its language is the limit $\lang(\Omega_F)=\bigcap_{j\in\Nset}\lang(F^j(\Sigma))$ of the languages of the successive image subshifts. In the case of TDDS, we can generalize Proposition \ref{p:lnilpeq} to obtain a strong condition of stability via the limit set.
\begin{prop}\label{p:limper}
 Any \ssl{limit-periodic} TDDS is \ssl{equicontinuous}.
\end{prop}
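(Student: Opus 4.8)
The plan is to reduce equicontinuity to the finiteness of all traces via Proposition~\ref{p:eqtr}, and then to obtain that finiteness from the $p$-periodicity of $F$ on the limit set, going through the trace subshift. So let $(\Sigma,F)$ be a $p$-limit-periodic TDDS, meaning $(F\restr{\Omega_F})^p=\id$, and fix a width $k\in\Nset$. By Proposition~\ref{p:eqtr} it suffices to show that each central trace $\tau_F\iexp k$ is finite, so from now on $k$ is fixed.

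First I would examine the limit set of this trace subshift. Since $T_F\iexp k$ is a factor map of $(\Sigma,F)$ onto $(\tau_F\iexp k,\sigma)$, it is in particular a complete simulation, so Proposition~\ref{p:limsim} gives $T_F\iexp k(\Omega_F)=\Omega_\sigma(\tau_F\iexp k)$. Now $T_F\iexp k$ is a morphism intertwining $F$ and $\sigma$, whence $\sigma^p\circ T_F\iexp k=T_F\iexp k\circ F^p$; and for any $x\in\Omega_F$ we have $F^p(x)=x$ by hypothesis, so $\sigma^p(T_F\iexp k(x))=T_F\iexp k(x)$. Therefore every point of $\Omega_\sigma(\tau_F\iexp k)$ is fixed by $\sigma^p$. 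Such a subshift has each of its points determined by its first $p$ coordinates, so it is finite.

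It then remains only to lift finiteness from the limit set back to the trace itself. Being finite, $\Omega_\sigma(\tau_F\iexp k)$ is in particular an SFT, so Corollary~\ref{c:limsft} (a subshift is finite if and only if its limit set is) yields that $\tau_F\iexp k$ is finite. Since $k$ was arbitrary, all central traces are finite, and Proposition~\ref{p:eqtr} concludes that $F$ is equicontinuous. The one step carrying the real content is the transfer of periodicity across the factor map, namely the identification $T_F\iexp k(\Omega_F)=\Omega_\sigma(\tau_F\iexp k)$ together with the observation that $p$-periodicity of $F$ on $\Omega_F$ forces $\sigma^p$ to act as the identity on the limit set of the trace; everything else is a direct invocation of the characterizations already established. (One could instead adapt the uniform-convergence argument of Proposition~\ref{p:lnilpeq}, using that orbits converge uniformly to $\Omega_F$ and that $F\restr{\Omega_F}$ is equicontinuous, but the trace route is cleaner as it reuses the established machinery.)
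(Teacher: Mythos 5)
Your proof is correct and follows essentially the same route as the paper's own three-line argument: Proposition~\ref{p:limsim} transfers limit-periodicity across the trace factor maps, Corollary~\ref{c:limsft} upgrades the resulting finite limit set to finiteness (equivalently, preperiodicity) of the whole trace, and Proposition~\ref{p:eqtr} concludes equicontinuity. You simply make explicit the details the paper leaves implicit, namely that $p$-periodicity of $F\restr{\Omega_F}$ forces every point of $\Omega_\sigma(\tau_F\iexp k)=T_F\iexp k(\Omega_F)$ to be fixed by $\sigma^p$, hence that this limit set is finite.
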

\begin{proof}
 Let $(\Sigma,F)$ be a limit-periodic TDDS. Then Proposition \ref{p:limsim} gives that its traces are all limit-periodic too, and Corollary \ref{c:limsft} that they are preperiodic. Proposition \ref{p:eqtr} allows then to conclude that $F$ is equicontinuous.
\end{proof}

As far as nilpotency is concerned, it is obvious that a TDDS is nilpotent if and only if all of its traces are nilpotent. For a PCA $F$, as all of them share the same projection of width $1$, the characterization is simpler: $F$ is \ssl{nilpotent} if and only if the central trace $\tau_F$ is \ssl{nilpotent}. The case of period $p=1$ gives us a generalization of a well-known characterization of CA nilpotency \cite{Culik:1989}.
\begin{prop}
 Any PCA is \ssl{nilpotent} if and only if it is \ssl{limit-nilpotent}.
\end{prop}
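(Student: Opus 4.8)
The forward implication is immediate and was in effect already noted: a nilpotent PCA has its image eventually equal to a single configuration, so its limit set is that singleton. For the converse, suppose $(\Sigma,F)$ is limit-nilpotent, say $\Omega_F=\{z\}$. Since the limit set of a PCA is a subshift, the singleton $\{z\}$ is $\sigma$-invariant, hence $z$ is a uniform configuration; and since $\Omega_F$ is the maximal surjective subsystem it is strongly invariant, so $F(z)=z$. Thus $z$ is both shift-fixed and $F$-fixed, which is the observation everything hinges on.

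The plan is to transfer limit-nilpotency down to the central trace, in parallel with the proof of Proposition~\ref{p:limper}. The trace application $T_F\colon\Sigma\to\tau_F$ is a complete (indeed direct and total) factor map of $(\Sigma,F)$ onto $(\tau_F,\sigma)$, so Proposition~\ref{p:limsim} applies and gives $\Omega_{\tau_F}=T_F(\Omega_F)=T_F(\{z\})$. But $F(z)=z$ forces $T_F(z)=(F^t(z)_0)_{t\in\Nset}=(z_0)_{t\in\Nset}$ to be the constant sequence, so $\Omega_{\tau_F}$ is itself a singleton, i.e.\ the subshift $\tau_F$ is limit-nilpotent. By the remark following Corollary~\ref{c:limsft}, a limit-nilpotent subshift is nilpotent, so the central trace $\tau_F$ is nilpotent. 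The characterization recalled just above this proposition, namely that a PCA is nilpotent if and only if its central trace is, then yields that $F$ is nilpotent.

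Alternatively one can bypass the trace machinery entirely: the central cylinder $[z_0]$ is an open neighborhood of the maximal attractor $\Omega_F=\{z\}$, so by the finite-reachability property of neighborhoods of the limit set (the uniform convergence $\max_{x\in\Sigma}\dist(F^j(x),\Omega_F)\to0$) there is a generation $t\in\Nset$ with $F^t(\Sigma)\subseteq[z_0]$, that is $F^t(x)_0=z_0$ for every $x\in\Sigma$; the shift-invariance remark recalled earlier in Section~\ref{s:sdd} then gives nilpotency at once. Either route works, and the only substantive ingredients (Proposition~\ref{p:limsim} and Corollary~\ref{c:limsft}, or the shift-invariance remark and the attractor property) are already in hand. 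I expect the sole point requiring genuine care to be the elementary but essential remark that a singleton limit set of a PCA is automatically a shift- and $F$-fixed uniform point, since it is exactly this degeneracy that collapses the trace to a single point and lets the established subshift dichotomy be invoked.
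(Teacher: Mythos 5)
Your proposal is correct, and your main route is essentially the paper's own (implicit) argument: the paper derives this proposition exactly from Proposition \ref{p:limsim} applied to the central trace factor map, the remark after Corollary \ref{c:limsft} that limit-nilpotent subshifts are nilpotent, and the equivalence, stated just before the proposition, between nilpotency of a PCA and of its central trace $\tau_F$. Your alternative route --- observing that the cylinder $[z_0]$ is a neighborhood of the maximal attractor $\{z\}$, hence reached uniformly in finite time, and concluding by the shift-invariance remark of Section \ref{s:sdd} --- is also sound and bypasses the trace machinery entirely, but it is not the path the paper takes.
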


In the case of a full CA, we can prove some restriction on the limit set showing that the nilpotent behavior can be ``isolated'' from other behaviors: if a CA is not nilpotent, its limit set will contains numerous configurations. 
\begin{prop}\label{p:semi0}
 Let $(\am,F)$ a \ssl{non $0$-nilpotent} CA, with $0\in A$ and $\dinf0\in\Omega_F$. Then $\Omega_F$ contains, for any $k\in\Mset$, a \ssl{semifinite} configuration $z\ne\dinf0$ such that $z_i=0$ for all cell $i<k$.
\end{prop}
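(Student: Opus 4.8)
The plan is to realise the desired configuration as a limit, over the generations $t$, of configurations that already lie in $F^t(\am)$, vanish on every cell strictly below $k$, and are nonzero exactly at cell $k$; such a limit will then automatically belong to $\Omega_F=\bigcap_tF^t(\am)$.

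The engine is a splicing argument based on the finite speed of propagation of a cellular automaton: each cell of $F^t(x)$ depends only on the cells of $x$ within distance $rt$. Fix $t$. Since $\dinf0\in\Omega_F\subseteq F^t(\am)$, there is a configuration $u$ with $F^t(u)=\dinf0$; and since $F^t(\am)\ne\{\dinf0\}$ (non-$0$-nilpotency together with $\dinf0\in\Omega_F$ forces this at every generation, as $\dinf0\in F^{t'}(\am)\subseteq F^t(\am)$ would otherwise make $F$ be $0$-nilpotent), there is a configuration $v$ whose image $F^t(v)$ carries a nonzero cell, which by shift-invariance of $F^t(\am)$ I may take arbitrarily far to the right. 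I then glue $u$ and $v$ at a seam $i$ chosen well to the left of that nonzero cell, forming $x=u\conc_iv$. By finite propagation speed, $F^t(x)_j$ equals $F^t(u)_j=0$ for $j<i-rt$ and equals $F^t(v)_j$ for $j>i+rt$; thus $F^t(x)\in F^t(\am)$ is identically $0$ on the left ray below $i-rt$ while still bearing a nonzero cell farther right. Using shift-invariance once more, I translate $F^t(x)$ so that its leftmost nonzero cell sits exactly at $k$, obtaining $w^{(t)}\in F^t(\am)$ with $w^{(t)}_j=0$ for $j<k$ and $w^{(t)}_k\ne0$.

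Now I pass to the limit. As $A$ is finite, infinitely many $w^{(t)}$ carry a common value at cell $k$; compactness of $\am$ lets me extract from them a convergent subsequence $w^{(t_n)}\to z$. The conditions $z_j=0$ for $j<k$ and $z_k\ne0$ are closed, hence satisfied by $z$; so $z\ne\dinf0$ and $z$ is semifinite with the prescribed left tail of $0$s. Moreover $z\in\Omega_F$: for each fixed $s$ the images decrease, so $w^{(t_n)}\in F^s(\am)$ as soon as $t_n\ge s$, and $F^s(\am)$ is closed, whence $z\in F^s(\am)$ for all $s$, i.e. $z\in\Omega_F$.

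The crux, and main obstacle, is to stop the limit from collapsing onto $\dinf0$: a priori the nonzero cells of the $w^{(t)}$ could drift off to infinity. Anchoring the leftmost nonzero cell at the fixed position $k$, together with finiteness of $A$ to pin down its value, is exactly what rules this out. Two further points need care. First, the left half must be exactly $0$, not merely asymptotically close to $\dinf0$; this is where the hypothesis $\dinf0\in\Omega_F$ is essential, since it supplies the exact preimages $F^t(u)=\dinf0$ and lets the argument dispense with any quiescence assumption on $0$. Second, in the onesided case $\Mset=\Nset$ only left-translations are available, so the leftmost nonzero cell must already be far to the right before one translates; this is guaranteed by having pushed the nonzero cell of $F^t(v)$ far right beforehand (via iterated shift-preimages inside the subshift $F^t(\am)$). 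The configuration $z$ so produced need not be $0$-finite, merely semifinite, its nonzero part possibly an infinite tail.
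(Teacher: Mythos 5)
Your proof is correct, and while it shares the paper's overall skeleton --- produce, for every generation $t$, a configuration of $F^t(\am)$ lying in the closed set $\pinf0[\compl0]_k$ (zero on all cells below $k$, nonzero at $k$), then land in $\Omega_F\cap\pinf0[\compl0]_k$ by compactness of the decreasing sequence of images --- the mechanism you use for the per-generation witness is genuinely different. The paper invokes its standing remark that every $F^t(\am)$ contains a $0$-finite nonuniform configuration; that remark rests on the density of $0$-finite configurations in the nonempty open set $F^{-t}([A\setminus\{0\}])$, together with the fact that the set of $0$-finite configurations is invariant under some power $F^p$ (existence of a state quiescent for $F^p$), and the paper moreover reduces the onesided case to the twosided one (``WLOG twosided''). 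You instead splice an exact preimage $u$ of $\dinf0$ (supplied by the hypothesis $\dinf0\in\Omega_F$) with a preimage $v$ of a nonuniform configuration (supplied by non-nilpotency and decreasingness of the images), and let finite propagation speed guarantee that $F^t(u\conc_i v)$ vanishes on a left ray while keeping a nonzero cell far to the right of the seam. This buys two things: the argument is self-contained, dispensing entirely with the quiescence machinery (the hypothesis $\dinf0\in\Omega_F$, used to get exact preimages, is what replaces it); and the onesided case is handled directly, your device of pushing the nonzero cell rightwards by taking $\sigma$-preimages inside $F^t(\an)$ being legitimate since $\sigma$ is surjective on $\an$ and commutes with $F^t$, so that $\sigma^n$ restricted to $F^t(\an)$ is onto $F^t(\an)$. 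What you give up is minor: your witnesses are only semifinite (zero on a left ray) rather than $0$-finite on both sides, but the statement requires no more, and your closing step --- closedness of the conditions $z_j=0$ for $j<k$ and $z_k\ne0$, plus closedness of each compact $F^s(\am)$ --- is exactly the paper's compactness argument.
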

\begin{proof}
We can consider without loss of generality that the CA is twosided.
By a previous remark, for any generation $j\in\Nset$, $F^j(\az)$ contains some $0$-finite nonuniform configuration $z\in\pinf0[u]\uinf0$, with $u\in A^+\setminus0^+$. Composing with a shift, we obtain $F^j(\am)\cap\pinf0[\compl0]_k\ne\emptyset$ and compactness gives $\Omega_F\cap\pinf0[\compl0]_k\ne\emptyset$.
\end{proof}

As a consequence, we have another characterization of nilpotency.
\begin{cor}
  A CA is \ssl{nilpotent} if and only if it admits some \ssl{isolated uniform} configuration.
\end{cor}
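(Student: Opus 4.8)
The plan is to read this corollary as a repackaging of Proposition~\ref{p:semi0}, once we agree that \emph{isolated} here means isolated as a point of the limit set $\Omega_F$ (no point of $\am$ is isolated as soon as $\card A\ge2$, so there is no other sensible reading of the word).

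For the easy implication, I would assume $F$ nilpotent. By the previous proposition it is then limit-nilpotent, so $\Omega_F=\{z\}$ is a singleton and $z$ is trivially isolated in $\Omega_F$. It remains to see that $z$ is uniform: picking $q\in\Nset$ with $F^q(\am)=\{z\}$ and using that $F$ commutes with $\sigma$, I get $\sigma(z)=\sigma F^q(x)=F^q\sigma(x)=z$ for any $x\in\am$, so $z$ is $\sigma$-invariant, i.e. $z=\dinf a$ for some $a\in A$. This exhibits the required isolated uniform configuration.

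For the converse, suppose $\Omega_F$ admits an isolated uniform configuration, which we relabel as $\dinf0$; in particular $\dinf0\in\Omega_F$. I would argue by contradiction: if $F$ were \emph{not} $0$-nilpotent, then Proposition~\ref{p:semi0} would yield, for every $k\in\Mset$, a configuration $z^{(k)}\in\Omega_F$ with $z^{(k)}\ne\dinf0$ and $z^{(k)}_i=0$ for all $i<k$. The key observation is that these witnesses accumulate on $\dinf0$: for any fixed window $\radi N$, as soon as $k>N$ every cell of $\radi N$ satisfies $i\le N<k$, so $z^{(k)}\isub N=\dinf0\isub N$ and hence $\dist(z^{(k)},\dinf0)\tnd[k]0$. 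Thus $\dinf0$ would be a limit of points of $\Omega_F\setminus\{\dinf0\}$, contradicting its isolation. Therefore $F$ is $0$-nilpotent, in particular nilpotent.

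The genuine mathematical content sits entirely in Proposition~\ref{p:semi0}; the only real care needed is to notice that the semifinite witnesses it produces truly converge to $\dinf0$ in the product metric — they are forced to vanish on an arbitrarily long left ray, which eventually swallows any fixed central window — and, on the forward side, to record that a nilpotent CA must collapse onto a shift-invariant, hence uniform, configuration. I expect the only possible obstacle to be this bookkeeping around the correct reading of \emph{isolated} and the convergence estimate, rather than anything substantial.
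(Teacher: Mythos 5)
Your proof is correct and takes essentially the same route as the paper: the forward direction is the paper's ``obvious'' converse (nilpotency forces a singleton, shift-invariant, hence uniform limit set), and the hard direction is exactly the paper's contrapositive application of Proposition~\ref{p:semi0}, with your convergence bookkeeping simply making explicit why the semifinite witnesses accumulate on $\dinf0$ inside $\Omega_F$.
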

\begin{proof}
  Let $(\am,F)$ a non-nilpotent CA, $0\in A$, $k\in\Nset$; Proposition \ref{p:semi0} gives some nonuniform configuration in $\ball{2^{-k}}x\cap\Omega_F$.
The converse is obvious.
\end{proof}

The previous result allows us to obtain some well-known fact on the cardinality of the limit set of a cellular automaton.
\begin{prop}[{\v{C}}ul\'{\i}k, Pachl \& Yu \cite{Culik:1989}]
The \ssl{limit set} of any CA is either a singleton or infinite.
\end{prop}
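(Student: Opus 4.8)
The plan is to use the structural dichotomy already established in this section: a CA is either $0$-nilpotent for some state $0$, or its limit set is forced to be large. The statement asserts that $\Omega_F$ is either a singleton or infinite, so I would argue by contraposition, assuming $\Omega_F$ is not a singleton and deriving that it must be infinite.

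First I would recall that $\Omega_F$ is always a nonempty closed $\sigma$-invariant subshift. The key observation is that the only way for a subshift to be a finite set is for it to consist entirely of periodic configurations; in particular, a singleton subshift must be a uniform configuration $\dinf0$ for some quiescent state $0\in A$ (quiescent because $\Omega_F$ is strongly $F$-invariant as a limit set, so $F(\dinf0)=\dinf0$). So the question reduces to: if $\Omega_F$ contains more than one configuration, why can it not be merely finite (i.e.\ nonsingleton but still finite)?

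The heart of the argument is Proposition~\ref{p:semi0}. If $F$ is not $0$-nilpotent and $\dinf0\in\Omega_F$, that proposition supplies, for every cell $k\in\Mset$, a semifinite configuration $z^{(k)}\ne\dinf0$ in $\Omega_F$ that is null on all cells $i<k$. As $k$ decreases (or increases, in the twosided case), these configurations differ from $\dinf0$ arbitrarily far out, so they are pairwise distinct for infinitely many values of $k$, yielding infinitely many distinct elements of $\Omega_F$. The main case to set up is therefore showing that if $\Omega_F$ is not a singleton then we are in the hypotheses of Proposition~\ref{p:semi0}: concretely, either $F$ fails to be $0$-nilpotent for the relevant quiescent $0$, or $\Omega_F$ already contains two distinct uniform configurations $\dinf0,\dinf{0'}$, in which case applying the semifinite-configuration argument relative to $0$ (and using $\dinf0\in\Omega_F$) again produces infinitely many configurations.

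The step I expect to be the main obstacle is the bookkeeping for the case where $\Omega_F$ contains several uniform configurations but one wishes to invoke Proposition~\ref{p:semi0} for a specific quiescent state $0$ with $\dinf0\in\Omega_F$ and $F$ non-$0$-nilpotent simultaneously. If $\Omega_F$ is finite and contains $\dinf0$, then $F$ restricted to $\Omega_F$ is a bijection on a finite set, hence preperiodic, so $F$ is $0$-nilpotent only if $\Omega_F=\{\dinf0\}$; thus a finite nonsingleton $\Omega_F$ means $F$ is \emph{not} $0$-nilpotent while still containing $\dinf0$, which is exactly what Proposition~\ref{p:semi0} needs to contradict finiteness. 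Making this implication airtight—ensuring some uniform $\dinf0$ actually lies in $\Omega_F$, which follows from the earlier remark that the finite subsystem of uniform configurations has a quiescent state for some power $F^p$ and that $\Omega_{F^p}=\Omega_F$—is the delicate point, after which the contradiction with the infinitude produced by Proposition~\ref{p:semi0} closes the argument.
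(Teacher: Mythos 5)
Your proposal is correct and takes essentially the same route as the paper: the paper obtains this proposition as a direct consequence of Proposition \ref{p:semi0}, applied to the state $0$ that is quiescent for some power $F^p$ (so that $\dinf0\in\Omega_{F^p}=\Omega_F$), with non-$0$-nilpotency following from $\Omega_F$ not being a singleton --- exactly your chain of reductions. The final counting step (the configurations $z^{(k)}$ are pairwise distinct for infinitely many $k$ since any fixed nonzero configuration has a leftmost nonzero cell) is also the intended argument.
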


An infinite limit set can be countable as the $\Min$ CA, or uncountable, as for surjective CA, in which case, being a subshift, it has a continuous cardinality.
The dichotomy of the previous proposition is no more true for PCA, for instance on finite subshifts.

\section{Asymptotic set}\label{s:as}

If the limit set characterizes the set of points that can appear arbitrarily late during the evolution of the dynamical systems, it may actually contain points which look transient. This is the case of configurations of the form $\pinf0 1\ldots1 \uinf0$ for the $\Min$ CA: we know they will disappear soon. To better emphasize the asymptotic behavior, we study here the set containing all the points for which there exists an evolution of the dynamical system going an infinite number of times close to this point.
\begin{defi}\label{d:ensult}
 Let $(X,F)$ be a DDS. The \dfn{asymptotic set} of a set $X'\subset X$ is the set ${\omega_F(X')}=\bigcup_{x\in X'}\Omega_F(\{x\})$ of \ssl{adhering values} of orbits.
We note ${\omega_F}=\omega_F(X)$.
\end{defi}
 This set was called \emph{ultimate set} in \cite{hdr,nilpeng}, or \emph{accessible set} in \cite{durand}. 
For instance, the asymptotic set of the $\Min$ CA is $\{\dinf0,\dinf1\}$, and is strictly included in its limit set.

\subsection{Asymptotic set of DDS}

Like the limit set, the asymptotic set can be expressed 
by a metric property: it is the smallest subset $Y\subset X$ such that for any $x\in X$, $\dist(F^j(x),Y)\tnd[j]0$. In other words, for any neighborhood $U$ of $\omega_F$ and any point $x\in X$, there exists a generation $J\in\Nset$ such that $\forall j\ge J,F^j(x)\in U$.

 We can immediately see that if $X'\ne\emptyset$, then $\omega_F(X')$ is nonempty and \ssl{$F$-invariant}, but need not be closed (as opposed to the limit set).  The asymptotic set is also always a subset of the limit set: $\omega_F(X')\subset\Omega_F(X')$. One important problem is to understand the dynamics of the orbits which are in the difference of the two sets. 
First note that all the periodic points are contained in the asymptotic set. The following propositions go further.
\begin{prop}\label{p:ulttrans}
  The asymptotic set of a DDS $(X,F)$ contains all of its transitive subsystems.
\end{prop}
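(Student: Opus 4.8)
The plan is to reduce the statement to a single well-chosen orbit. Let $(Y,F)$ be a transitive subsystem of $(X,F)$. Since $Y$ is a nonempty compact (hence complete) metric space and, by transitivity, its set $R$ of transitive points is residual, Baire's theorem guarantees that $R$ is dense, in particular nonempty. I would fix a transitive point $x\in R$, so that $\cl{\orb^+_F(x)}=Y$, and claim that $\Omega_F(\{x\})=Y$. Since $x\in Y\subseteq X$, this claim yields $Y=\Omega_F(\{x\})\subseteq\bigcup_{w\in X}\Omega_F(\{w\})=\omega_F$, which is exactly what we want. The inclusion $\Omega_F(\{x\})\subseteq Y$ is immediate: every iterate $F^t(x)$ lies in the closed $F$-invariant set $Y$, so the decreasing intersection $\Omega_F(\{x\})=\bigcap_{j\in\Nset}\cl{\orb_F(F^j(x))}$ does too.

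The substance is the reverse inclusion $Y\subseteq\Omega_F(\{x\})$. Since $\Omega_F(\{x\})$ is closed (a decreasing intersection of closed sets) and $R$ is dense in $Y$, it suffices to prove $R\subseteq\Omega_F(\{x\})$ and then pass to the closure. The key intermediate step I would isolate is a recurrence lemma: \emph{every transitive point is recurrent}, i.e. $x'\in\Omega_F(\{x'\})$ for every $x'\in R$.

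To prove this lemma I would argue by contradiction. Suppose some transitive $x'$ is not recurrent; then there are $\varepsilon>0$ and $T\in\Nset$ with $\dist(F^t(x'),x')\ge\varepsilon$ for all $t\ge T$. Hence $\ball\varepsilon{x'}$ meets $\orb^+_F(x')$ in only finitely many points; but $x'\in Y=\cl{\orb^+_F(x')}$ forces $x'$ to be a limit of orbit points lying in this finite set, so $x'\in\orb^+_F(x')$, i.e. $x'$ is periodic — whence $x'\in\Omega_F(\{x'\})$, a contradiction. I expect this isolated-point subtlety (a transitive point could a priori sit in the transient part of its own orbit) to be the main obstacle, and it is precisely what the recurrence lemma dispatches.

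Finally I would assemble the pieces. Take any $x'\in R$. Density of $\orb^+_F(x)$ gives times $t_n$ with $F^{t_n}(x)\to x'$. If the $t_n$ are unbounded, then $x'\in\Omega_F(\{x\})$ directly by definition of the asymptotic set. Otherwise $x'=F^{t_0}(x)$ for some fixed $t_0$; the recurrence lemma gives $x'\in\Omega_F(\{x'\})=\Omega_F(\{F^{t_0}(x)\})$, and since dropping finitely many terms leaves a decreasing intersection unchanged one has $\Omega_F(\{F^{t_0}(x)\})=\Omega_F(\{x\})$, so again $x'\in\Omega_F(\{x\})$. Thus $R\subseteq\Omega_F(\{x\})$, and taking closures yields $Y\subseteq\Omega_F(\{x\})$, completing the argument.
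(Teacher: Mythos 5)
Your proposal is correct and follows essentially the same route as the paper: the paper's proof also reduces everything to a single transitive point $y$ of the subsystem and claims that every point of $Y$ is an adhering value of $\orb_F(y)$, which is precisely your assertion $\Omega_F(\{x\})=Y$. The only difference is that you supply the justification the paper compresses into an ``i.e.'' --- namely the recurrence lemma ruling out the isolated-periodic-point pathology and the bounded/unbounded return-times case analysis --- so your write-up is a more careful version of the same argument rather than a different one.
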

\begin{proof} 
Consider a transitive subsystem $(Y\subset X,F)$. Then there is a point $y\in Y$ which is transitive for this subsystem, \ie any point of $Y$ is an adhering value of $\orb_F(y)$.  
\end{proof}
Example \ref{x:minpenche} will show that the inclusion can be strict.

It is known that the set of \emph{uniformly recurrent} points is the union of the \emph{minimal} subsystems (see for instance \cite{kurka} for definitions). Similarly, we can prove the following proposition.
\begin{prop}
 For any recurrent point $x$, the subsystem $\cl{\orb_F(x)}$ is transitive.
\end{prop}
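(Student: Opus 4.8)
The plan is to set $Y:=\cl{\orb_F(x)}$ and verify that $(Y,F)$ meets the open-set characterization of transitivity recalled just before Proposition~\ref{p:recautotr}: for any nonempty relatively open sets $U,V\subseteq Y$, there must be a point $y\in U$ and a generation $t>0$ with $F^t(y)\in V$. First I would check that $Y$ is genuinely a subsystem: it is closed by definition, and $F(\orb_F(x))\subseteq\orb_F(x)$ together with continuity of $F$ yields $F(Y)\subseteq Y$, so $(Y,F)$ is indeed a DDS.

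The crucial input is to upgrade recurrence of $x$ into the statement that the forward orbit visits every neighborhood of $x$ at arbitrarily large times. Concretely, I would prove that for every neighborhood $W$ of $x$ and every $N\in\Nset$ there is $t>N$ with $F^t(x)\in W$. This follows from the defining property of recurrence by a maximality argument: were the return times to $W$ bounded, one picks the largest one $t^*$; by continuity one chooses a neighborhood $W'\subseteq W$ of $x$ with $F^{t^*}(W')\subseteq W$, applies recurrence to $W'$ to obtain $s>0$ with $F^s(x)\in W'$, and observes that $F^{t^*+s}(x)\in W$ is a strictly later return, a contradiction. In particular $x\in\cl{T_N}$, where $T_N=\sett{F^n(x)}{n>N}$ denotes the tail of the orbit.

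Next I would bootstrap this to density of every tail, namely $\cl{T_N}=Y$ for all $N$. Indeed $x\in\cl{T_N}$, and since $F^j(T_N)\subseteq T_N$ for every $j\ge0$, continuity gives $F^j(x)\in F^j(\cl{T_N})\subseteq\cl{T_N}$ for all $j$; hence the whole orbit satisfies $\orb_F(x)\subseteq\cl{T_N}$, whence $Y=\cl{\orb_F(x)}\subseteq\cl{T_N}$. Equivalently, the orbit of $x$ meets every nonempty relatively open subset of $Y$ infinitely often. The open-set condition is then immediate: given nonempty open $U,V$, use density of $\orb_F(x)$ to find $a$ with $F^a(x)\in U$, then density of $T_a$ to find $b>a$ with $F^b(x)\in V$, and take $y=F^a(x)\in U$ and $t=b-a>0$.

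The main obstacle — and the reason recurrence is indispensable — is precisely this passage from a single dense forward orbit to the two-open-set condition. A dense orbit alone does not force transitivity when $Y$ has isolated points, since deleting the finitely many early iterates $F^0(x),\dots,F^a(x)$ could in principle destroy density of the tail; recurrence is exactly the hypothesis that rules this out, guaranteeing returns at unboundedly large times. Everything else (closedness and $F$-invariance of $Y$, and the compactness-based equivalence between the two formulations of transitivity) is routine or already available in the excerpt.
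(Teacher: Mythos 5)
Your proof is correct, and its core is the same as the paper's: recurrence is used to put $x$ inside the closure of its own forward orbit, which makes the whole orbit closure the closure of a forward orbit. The paper's own argument is a one-liner along exactly those lines: it writes $\cl{\orb_F(x)}=\cl{\orb^+_F(x)}\cup\{x\}$, observes that recurrence means precisely $x\in\cl{\orb^+_F(x)}$, and concludes that $x$ has a dense positive orbit in the subsystem, i.e.\ that $x$ is a transitive point of $\cl{\orb_F(x)}$ --- and it stops there, leaving implicit the passage from \emph{one point with dense positive orbit} to \emph{transitivity of the system} (whether in the residual-set definition or in the two-open-set characterization). That passage is exactly the subtlety you isolate and resolve: your maximality argument showing that a recurrent point returns to any of its neighborhoods at unboundedly large times, hence that every tail $T_N$ of the orbit is dense in $\cl{\orb_F(x)}$, is the bridge the paper takes for granted. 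With dense tails you get the two-open-set condition directly, and in fact a bit more: each $F^k(x)$ is then itself a transitive point, so the transitive points of the subsystem contain a dense orbit and, being a $G_\delta$, form a residual set --- which is the paper's official definition of a transitive DDS. So your proof buys self-containedness and rigor at the cost of length, while the paper's buys brevity by treating as immediate a step that, as you rightly note, genuinely fails for a dense orbit without recurrence (isolated points of the orbit closure are the obstruction).
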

\begin{proof}
 In general, the closure $\cl{\orb_F(x)}$ of the orbit is the union of the closure $\cl{\orb^+_F(x)}$ of the positive orbit and of the singleton $x$. By the property of recurrence, $x\in\cl{\orb^+_F(x)}$. Hence $x$ has a dense positive orbit in this subsystem.
\end{proof}
Nevertheless, the set of transitive subsystems also includes other points: see for instance the case of the full shift, which is transitive, but admits some non-recurrent points.
The two last propositions give that the asymptotic set contains the set of recurrent points; actually it can easily be seen that they are exactly the points which are an adhering value of their own orbit.
 On the other hand, we can show that it is a subset of the set of nonwandering points.
\begin{prop}
 Any point of the \ssl{asymptotic set} of a DDS is \ssl{nonwandering}.
\end{prop}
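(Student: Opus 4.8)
The plan is to unwind the definition of the asymptotic set and then exploit the semigroup structure of iteration. Take a point $p\in\omega_F$; by Definition \ref{d:ensult} there is some $x\in X$ with $p\in\Omega_F(\{x\})$. Recalling that $\Omega_F(\{x\})=\bigcap_{j\in\Nset}\cl{\orb_F(F^j(x))}$ and that $\orb_F(F^j(x))=\sett{F^s(x)}{s\ge j}$, membership $p\in\Omega_F(\{x\})$ means precisely that $p$ is an adhering value of the orbit $\orb_F(x)$: there exists a strictly increasing sequence of generations $(t_n)_{n\in\Nset}$ with $F^{t_n}(x)\tnd p$.

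First I would reduce the nonwandering condition to a single neighborhood. Since the two neighborhoods $U,V$ of $p$ appearing in the definition of nonwanderingness are both neighborhoods of the same point, their intersection $W=U\cap V$ is again a neighborhood of $p$, and it suffices to produce a point $y\in W$ and a generation $t>0$ with $F^t(y)\in W$ (this automatically gives $y\in U$ and $F^t(y)\in V$). Next I would observe that the orbit of $x$ visits $W$ infinitely often: since $F^{t_n}(x)\tnd p$, all but finitely many of the $F^{t_n}(x)$ lie in $W$, so there are infinitely many generations $t$ with $F^t(x)\in W$. Picking two of them, say $s<t$ with $F^s(x),F^t(x)\in W$, and setting $y=F^s(x)$, the semigroup identity $F^{t-s}\circ F^s=F^t$ yields $y\in W$ and $F^{t-s}(y)=F^t(x)\in W$ with $t-s>0$, which is exactly the desired return.

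There is no serious obstacle here; the argument is essentially a direct translation. The only two points requiring a little care are the reduction from the two-neighborhood formulation of nonwanderingness to the single-neighborhood one (immediate by intersecting $U$ and $V$), and the passage from \xpr{$p$ is an adhering value} to \xpr{the orbit returns to $W$ infinitely often} (immediate from the convergence of the chosen subsequence). Once these are in place, choosing two distinct return times and invoking the semigroup law closes the proof.
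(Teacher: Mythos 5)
Your proof is correct and follows essentially the same route as the paper's: both arguments unwind membership in the asymptotic set into infinitely many visits of some orbit to a single small neighborhood of the point, then pick two visit times and apply the semigroup law $F^{t-s}\circ F^s=F^t$ to exhibit the return. The only cosmetic difference is that you reduce the two-neighborhood definition via the intersection $U\cap V$, whereas the paper works directly with a ball $\ball\varepsilon x$, which amounts to the same reduction.
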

\begin{proof}
 Let $(X,F)$ a DDS, $\varepsilon>0$ and $x\in\omega_F$, \ie there exists a point $y\in X$ whose orbit admits $x$ as adhering value; in particular, it goes an infinite number of times in the ball $\ball\varepsilon x$. Therefore, there exist some point $y'=F^J(y)\in\ball\varepsilon x$ and some generation $j\in\Ns$ such that $F^j(y')=F^{J+j}(y)\in\ball\varepsilon x$.
\end{proof}

The main interest in these inclusions is that they are \xpr{not far} from each other: from the remark that the set of nonwandering points is closed and from Proposition \ref{p:recautotr}, we deduce the following characterization: a DDS $(X,F)$ is \ssl{nonwandering} if and only if its \ssl{asymptotic set} $\omega_F$ is a \ssl{residual subset} of $X$.

The long-term behavior of the orbits of a system tends to look more and more like the behavior on the asymptotic set, it is therefore relevant to study the case when asymptotic points have a simple evolution, as we have done for the limit set.

\begin{defi}
  A DDS $(X,F)$ is \dfn{asymptotically $z$-nilpotent} if all of its orbits converge towards the same limit $z\in X$, \ie $\omega_F=\{z\}$.  It is \dfn{asymptotically $p$-periodic}, with $p\in\Ns$, if the restricted map $F\restr{\omega_F}$ is $p$-periodic.
\end{defi}

With these definitions, if $F$ is an \ssl{asymptotically $z$-nilpotent} DDS, then $z$ is a fix point of $F$, since $\omega_F$ is $F$-invariant; in particular, $F$ is \ssl{asymptotically $1$-periodic}.  Moreover, for any $\varepsilon>0$, there exists a generation $J\in\Nset$ such that for any point $x\in X$, $\exists j<J,\dist(F^j(x),z)<\varepsilon$.

It is possible to link these behaviors with the previously-defined ones. The first easy point is that \ssl{weakly nilpotent} DDS are \ssl{asymptotically nilpotent}. However, the converse is not true. A simple counter-example is the division by $2$ on interval $[0,1]$. Nevertheless, asymptotically nilpotent DDS cannot be too much unstable, as formalized by the following proposition.

\begin{prop}\label{p:nilbloc}
 No \ssl{asymptotically nilpotent} DDS is \ssl{sensitive}.
\end{prop}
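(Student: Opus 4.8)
The plan is to argue by contradiction, extracting from sensitivity a single orbit that does not converge to $z$, which is impossible under asymptotic $z$-nilpotency. First I would unpack the hypothesis: $\omega_F=\{z\}$ means $\bigcup_{x\in X}\Omega_F(\{x\})=\{z\}$, and since each set $\Omega_F(\{x\})$ of adhering values is nonempty, we get $\Omega_F(\{x\})=\{z\}$ for every $x$. In a compact metric space a sequence with a single adhering value converges to it, so this says precisely that $\dist(F^t(x),z)\tnd[t]0$ for every $x\in X$. Now assume, for contradiction, that $F$ is $\varepsilon$-sensitive for some $\varepsilon>0$, \ie every point is $\varepsilon$-unstable.

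For each $N\in\Nset$ I would introduce the set
\[A_N=\set xX{\exists t\ge N,\ \dist(F^t(x),z)>\varepsilon/2}~,\]
which is open, being a union over $t\ge N$ of the open sets $\set xX{\dist(F^t(x),z)>\varepsilon/2}$. The core of the proof is to show each $A_N$ is dense. Given a nonempty open set $O$ and a point $x\in O$, I would first use the uniform continuity of the finitely many maps $F^0,\dots,F^{N-1}$ (guaranteed by compactness of $X$) to pick $\delta>0$ with $\ball\delta x\subseteq O$ and $\dist(F^j(a),F^j(b))<\varepsilon$ whenever $\dist(a,b)<\delta$ and $j<N$. The $\varepsilon$-instability of $x$ then provides some $y\in\ball\delta x\subseteq O$ and a generation $t$ with $\dist(F^t(x),F^t(y))>\varepsilon$; by the choice of $\delta$ this forces $t\ge N$, and the triangle inequality forces $\max(\dist(F^t(x),z),\dist(F^t(y),z))>\varepsilon/2$. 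Hence one of $x,y$ lies in $A_N\cap O$, proving density. By Baire's theorem the dense $G_\delta$ set $\bigcap_{N\in\Nset}A_N$ is nonempty; any of its points has an orbit leaving $\ball{\varepsilon/2}z$ at arbitrarily large times, hence not converging to $z$, contradicting asymptotic nilpotency.

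The hard part is the placement of the separation time: sensitivity by itself only guarantees that nearby orbits separate at \emph{some} generation, and if all such separations occurred early the orbit could still converge to $z$. The device that fixes this is exactly the uniform-continuity shrinking of $\delta$ over the finite window $\{0,\dots,N-1\}$, which pushes every separation past time $N$. I would also stress what the argument does \emph{not} use: only pointwise convergence of orbits is needed, not uniform convergence, and indeed $z$ need not itself be a stable point (as when an attractor is approached from a repelling side), so one cannot simply invoke stability of $z$; the stable behaviour must instead be harvested globally through the Baire category argument above.
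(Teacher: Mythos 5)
Your proof is correct. It relies on the same two estimates as the paper's proof --- uniform continuity of the finitely many iterates $F^0,\dots,F^{N-1}$ over a finite time window, and the triangle inequality through $z$ --- but it deploys Baire's theorem in the dual direction. The paper argues directly from nilpotency: it writes $X=\bigcup_{J\in\Nset}\bigcap_{j>J}F^{-j}(\ball{\varepsilon/2}z)$, applies Baire to find one member of this union with nonempty interior, and inside that interior builds an $\varepsilon$-stable point; sensitivity is never assumed, so the paper positively exhibits, for every $\varepsilon>0$, a whole ball of points that $\varepsilon$-shadow a common orbit. You instead assume sensitivity, use it to prove that the escape sets $A_N$ are dense (this is where your uniform-continuity device pushes separations past time $N$), and let Baire produce one orbit that leaves $\ball{\varepsilon/2}z$ at arbitrarily large times, contradicting the convergence of all orbits to $z$. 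The two arguments are contrapositive rearrangements of one another: your $A_N$ is, up to the boundary of the ball, the complement of the paper's set $\bigcap_{j\ge N}F^{-j}(\ball{\varepsilon/2}z)$, and "some trapped set has interior" versus "all escape sets are dense" are the two faces of the same Baire alternative. Neither route is more general, but each has a small advantage: the paper's version is constructive about where the stable points live and proves the existence of $\varepsilon$-stable points without any sensitivity hypothesis, while your version works throughout with sets that are honestly open, thereby sidestepping the paper's slight imprecision of calling $\bigcap_{j>J}F^{-j}(\ball{\varepsilon/2}z)$ a closed set even though it is defined from open balls (one should take closed balls there for Baire to apply as stated).
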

\begin{proof}
  Let $(X,F)$ an asymptotically $z$-nilpotent DDS, with $z\in X$, and $\varepsilon>0$. By definition, the space $X$, of nonempty interior, can be decomposed as a union $\bigcup_{J\in\Nset}\bigcap_{j>J}F^{-j}(\ball{\varepsilon/2}z)$ of closed subsets. By Baire's theorem, there exists a generation $J\in\Nset$ such that the closed subset $\bigcap_{j>J}F^{-j}(\ball{\varepsilon/2}z)$ contains an open subset $U$ of nonempty interior. Let $x\in U$.
 The finite intersection $U\cap\bigcap_{j\le J}F^{-j}(\ball\varepsilon{F^j(x)})$ is then open and contains $x$; consequently, it contains an open ball $\ball\delta x$, with $\delta>0$. For any point $y\in\ball\delta x$ and any generation $j\le J$, we have by construction $\dist(F^j(x),F^j(y))\le\varepsilon$; for any generation $j>J$, we have the triangular inequality $\dist(F^j(x),F^j(y))\le\dist(F^j(x),z)+\dist(z,F^j(y))\le\varepsilon$. As a result, the point $x$ is $\varepsilon$-stable.
\end{proof}

Let us look at how the asymptotic set can be related to the notions of simulation.
\begin{prop}\label{p:ultsim}
Let $\Phi$ a \ssl{complete simulation} by a DDS $(X,F)$ of another $(Y,G)$.
Then $\Phi(\omega_F(X))=\omega_G(Y)$.
\end{prop}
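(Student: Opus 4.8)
The plan is to follow the template of Proposition~\ref{p:limsim}, but since the asymptotic set is built from adhering values rather than from a decreasing intersection, the decreasing-intersection trick must be replaced by an explicit manipulation of convergent subsequences together with a compactness extraction. As there, the first ingredient is a power-invariance statement for the asymptotic set, namely $\omega_{F^n}=\omega_F$ for every $n\in\Ns$, which will play the role that $\Omega_{F^k}=\Omega_F$ played for the limit set. I would prove it by two inclusions. The inclusion $\omega_{F^n}\subseteq\omega_F$ is immediate, because the $F^n$-orbit of a point is a subsequence of its $F$-orbit, so any adhering value of the former is already an adhering value of the latter. For the reverse inclusion, given $w\in\Omega_F(\{x\})$ realized along $F^{t_k}(x)\to w$ with $t_k\to\infty$, I would write $t_k=nq_k+r_k$ with $r_k\in\{0,\dots,n-1\}$ and use the pigeonhole principle to fix a residue $r$ attained infinitely often; along that subsequence $F^{nq_k}(F^r(x))\to w$, so $w\in\Omega_{F^n}(\{F^r(x)\})\subseteq\omega_{F^n}(X)$.

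Armed with this lemma, a general complete simulation collapses to the direct total case. Indeed, by definition $\Phi$ is a factor map of $(X,F^n)$ onto $(Y,G^{n'})$, so I would simply treat it as a surjective morphism intertwining the single maps $\tilde F=F^n$ and $\tilde G=G^{n'}$, establish the identity for these, and then rewrite $\omega_{\tilde F}(X)=\omega_F(X)$ and $\omega_{\tilde G}(Y)=\omega_G(Y)$ via the power-invariance lemma. Thus the heart of the argument is to show, for a factor map $\Phi$ with $\Phi F=G\Phi$, that $\Phi(\omega_F(X))=\omega_G(Y)$.

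The forward inclusion is soft and purely topological: if $w\in\Omega_F(\{x\})$ with $F^{t_k}(x)\to w$, then continuity and the iterated intertwining relation $\Phi F^t=G^t\Phi$ give $G^{t_k}(\Phi(x))=\Phi(F^{t_k}(x))\to\Phi(w)$, so $\Phi(w)$ is an adhering value of the $G$-orbit of $\Phi(x)$, that is $\Phi(w)\in\omega_G(Y)$.

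The backward inclusion is where the real content lies, and I expect it to be the main obstacle. Given $v\in\Omega_G(\{y\})$ with $G^{t_k}(y)\to v$, I would first use surjectivity of $\Phi$ to pick $x$ with $\Phi(x)=y$, and then invoke compactness of $X$ to extract from $(F^{t_k}(x))_k$ a subsequence converging to some $w\in X$. Since the selected indices still tend to infinity, $w\in\Omega_F(\{x\})\subseteq\omega_F(X)$; and continuity together with $\Phi F^t=G^t\Phi$ forces $\Phi(w)=\lim G^{t_{k_l}}(y)=v$, the latter limit being a subsequence of $G^{t_k}(y)\to v$. The only delicate bookkeeping is the doubly-indexed subsequence extraction and checking that the intertwining survives iteration; both are routine, the second following by induction from $\Phi F=G\Phi$.
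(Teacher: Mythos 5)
Your proposal is correct and follows essentially the same route as the paper's proof: the same reduction of the complete simulation to a factor map between $(X,F^n)$ and $(Y,G^{n'})$, the same compactness extraction of an adhering value for the inclusion $\omega_G(Y)\subseteq\Phi(\omega_F(X))$, and the same pigeonhole argument on residues modulo $n$ to establish $\omega_{F^n}=\omega_F$. The only difference is that you spell out the forward inclusion and the double application of the power-invariance lemma, which the paper leaves as "immediate."
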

\begin{proof} 
Suppose that $\Phi$ is a factor map. Let $x\in\omega_G$, \ie $x$ is the limit of a subsequence $(G^{k_j}\Phi(y))_{j\in\Nset}$ where $(k_j)_{j\in\Nset}$ is an increasing sequence of integers. Then $(F^{k_j}(y))_{j\in\Nset}$ admits a adhering value $z$, whose image is $\Phi(z)=x$. Hence $x\in\Phi(\omega_F)$. The converse is immediate.

It is now sufficient to show that any DDS $F^k$ has the same asymptotic set than $F$.
First, the decreasingness of the sequence gives $\omega_{F^k}\subset\omega_F$. Then,
let $x\in\omega_F$, \ie $x$ is the limit of some subsequence $(F^{k_j}(y))_{j\in\Nset}$, where $(k_j)_{j\in\Nset}$ is an increasing sequence of integers. By the pigeon-hole principle, there exists some integer $r<k$ such that $J=\set j\Nset{k_j\bmod k=r}$ is infinite. We can see that $(F^{k_j-r}(y)_{j\in J}$ is a subsequence of the orbit of $F^r(y)$ by $F^k$ that admits $x$ as an adhering value. Hence $x\in\omega_{F^k}$.  
\end{proof}
Moreover, the asymptotic set of some subsystem is contained in the asymptotic set of the global system. We even have that, if $(X_1,F)$ and $(X_2,F)$ are two subsystems of $(X,F)$ such that $X_1\cup X_2=X$, then $\omega_F=\omega_F(X_1)\cup\omega_F(X_2)$.
 In particular, like nilpotency and preperiodicity, \ssl{asymptotic nilpotency} and \ssl{asymptotic periodicity} are transmitted by any \ssl{simulation}.

\subsection{Asymptotic set of subshifts}

Let us study how the asymptotic set is constrained in the particular case of subshifts. To lighten the reading, we will note ${\omega_\Sigma}=\omega_\sigma(\Sigma)$.

We can see that the asymptotic set of sofic subshifts can be seen from the graph of their limit sets by removing all the links between strongly connected components. That result can also be restated as follows: the \ssl{asymptotic set} of a sofic subshift is the disjoint union of its maximal \ssl{transitive subsystems}.

In particular, in the onesided case, the asymptotic set is reached by each orbit:
if $\Sigma$ is a onesided sofic subshift and $z\in\Sigma$, then there exists a generation $j\in\Nset$ such that $\sigma^j(z)\in\omega_\Sigma$.

Using regularity of sofic subshifts, it is possible to characterize the notion of asymptotic periodicity: a sofic subshift is \ssl{asymptotically periodic} if and only if it is the \ssl{label system} of some graph in which all the \ssl{strongly connected components} are \ssl{cycles}. In the case of onesided sofic subshifts, the reachability of the asymptotic set from any orbit shows that asymptotic periodicity is equivalent to weak preperiodicity. This can be generalized as follows.
\begin{prop}\label{p:fnilpper2}
Any onesided subshift is \ssl{asymptotically periodic} if and only if it is \ssl{weakly preperiodic}.
\end{prop}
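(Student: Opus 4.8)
The plan is to reduce the statement to a pointwise characterization and then reassemble it over all starting points. The elementary observation is that a point $x\in\Sigma$ is preperiodic if and only if its set of adhering values $\Omega_\sigma(\{x\})$ is a single finite periodic orbit: if $\sigma^{p+q}(x)=\sigma^q(x)$ then the orbit of $x$ ultimately cycles through the finite set $\{\sigma^q(x),\ldots,\sigma^{q+p-1}(x)\}$, whose points are exactly the adhering values and are $\sigma^p$-periodic; conversely, as explained below, if every adhering value of $x$ is periodic then $x$ must be preperiodic. Granting this equivalence, the two implications of the proposition follow by taking the union over $x\in\Sigma$, since $\omega_\Sigma=\bigcup_{x\in\Sigma}\Omega_\sigma(\{x\})$ by Definition~\ref{d:ensult}.

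First I would treat the easy direction. Assume $\Sigma$ is weakly preperiodic. Then every orbit is ultimately periodic, so by the observation above each $\Omega_\sigma(\{x\})$ consists of periodic points. Taking the union, every point of $\omega_\Sigma$ is periodic and $\sigma\restr{\omega_\Sigma}$ is periodic; hence $\Sigma$ is asymptotically periodic. This mirrors the sofic argument recalled just before the statement, where the period is read off a finite graph, except that here the bound on periods is obtained pointwise rather than from the graph.

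The substantive direction is the converse. Assume $\Sigma$ is asymptotically periodic, so every point of $\omega_\Sigma$ is periodic, and fix $x\in\Sigma$; I must show that $\Omega_\sigma(\{x\})$ reduces to a single periodic orbit. The set $\Omega_\sigma(\{x\})$ is compact, strongly $\sigma$-invariant, and all of its points are periodic. I would first prove it is finite: writing it as the increasing union of the finite closed sets $\Omega_\sigma(\{x\})\cap\set z\Sigma{\sigma^p(z)=z}$ over $p\in\Ns$, and using that an $\omega$-limit set is internally chain-transitive and so cannot split off a proper nonempty clopen invariant piece, one excludes periodic orbits of arbitrarily large period accumulating inside it. Once $\Omega_\sigma(\{x\})$ is known to be a single orbit of some period $p$, contained in the finite set $\set z\Sigma{\sigma^p(z)=z}$, the orbit of $x$ converges to it: $\dist(\sigma^j(x),\Omega_\sigma(\{x\}))\tnd[j]0$. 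Choosing a prefix length $k$ large enough to separate the finitely many points of the orbit, for all large $j$ the pattern $\sigma^j(x)\sco0k$ coincides with the corresponding prefix of a uniquely determined $w^{(j)}\in\Omega_\sigma(\{x\})$, and the consistency of overlapping prefixes forces $w^{(j+1)}=\sigma(w^{(j)})$. Thus the symbols $x_j=\sigma^j(x)_0=(w^{(j)})_0$ follow the period-$p$ cycle, giving $x_{j+p}=x_j$ for all $j$ beyond some $J$, that is $\sigma^{J+p}(x)=\sigma^J(x)$, so $x$ is preperiodic.

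The main obstacle is precisely the finiteness step for $\Omega_\sigma(\{x\})$, namely passing from \xpr{all adhering values are periodic} to \xpr{the orbit is genuinely, not merely asymptotically, periodic}. In an arbitrary DDS this implication fails (the division by two on $[0,1]$ has $\omega$-limit the periodic fixed point $0$ without any point being preperiodic), so the argument must exploit the discreteness of the symbolic setting together with the strong invariance and chain-transitivity of $\omega$-limit sets to rule out an infinite tower of periodic orbits of growing period inside a single $\Omega_\sigma(\{x\})$. This is the only place where one-sidedness and the subshift structure are essential; everything else is the pointwise bookkeeping and the convergence of orbits to the asymptotic set noted after Definition~\ref{d:ensult}.
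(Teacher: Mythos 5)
Your proposal is correct in substance, but its route is genuinely different from the paper's, which disposes of the nontrivial direction in three lines: if $\sigma^p=\id$ on $\omega_\Sigma$, then every $z\in\omega_\Sigma$ satisfies $z_0=z_p$, so the clopen set $U=\set x\Sigma{x_0=x_p}$ is a neighborhood of $\omega_\Sigma$; since every orbit eventually remains in any neighborhood of $\omega_\Sigma$ (the remark after Definition~\ref{d:ensult}), each $x\in\Sigma$ admits a $J$ with $x_j=x_{j+p}$ for all $j\ge J$, that is $\sigma^{J+p}(x)=\sigma^J(x)$. No finiteness of $\Omega_\sigma(\{x\})$, no chain transitivity: the uniform period $p$ plus the clopenness of a one-coordinate condition do all the work. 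You never use the uniform period; you assume only that each point of $\omega_\Sigma$ is periodic, and consequently you must prove that each $\Omega_\sigma(\{x\})$ is a single finite orbit --- precisely the step the paper's trick bypasses. Your sketch of that step is compressed but completable: $\Omega_\sigma(\{x\})$ consists of periodic points, hence is countable, hence by Baire has an isolated point $y$; $\sigma$ is injective on periodic points, hence a homeomorphism of the strongly invariant compact set $\Omega_\sigma(\{x\})$, so $\orb_\sigma(y)$ is a nonempty finite clopen invariant subset; internal chain transitivity (or an elementary direct neighborhood argument) forbids such a subset to be proper, so $\Omega_\sigma(\{x\})=\orb_\sigma(y)$, and your prefix bookkeeping then finishes. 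What your extra work buys is real: you prove the stronger implication that \emph{pointwise} periodicity of $\sigma\restr{\omega_\Sigma}$ already forces weak preperiodicity. This matters, because in the easy direction both you and the paper obtain only pointwise periodicity on $\omega_\Sigma$: weak preperiodicity does not bound the periods (the closure of $\bigcup_{n}\orb_\sigma(\uinf{(0^n1)})$ is weakly preperiodic, yet its asymptotic set contains a periodic orbit of period $n+1$ for every $n$), so under the literal ``single $p$'' definition the converse is not immediate --- it is your pointwise reading that makes the stated equivalence true, and your argument is the one that establishes it under that reading.

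Two inaccuracies in your closing commentary. One-sidedness is not needed where you say it is: the finiteness step works verbatim for twosided subshifts (countability, Baire and chain transitivity make no use of $\Mset=\Nset$). It is the final bookkeeping that needs one-sidedness: from $x_j=x_{j+p}$ for $j\ge J$ you conclude $\sigma^{J+p}(x)=\sigma^J(x)$ only because there are no coordinates to the left of $0$; in $\az$ the configuration $\pinf01\uinf0$ converges to the fixed point $\dinf0$, so its set of adhering values is a single periodic orbit, yet it is not preperiodic. Relatedly, the division-by-two example illustrates the failure of this same bookkeeping step (an orbit converging to a periodic orbit without being eventually periodic), not of the finiteness step.
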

\begin{proof}
 Let $\Sigma\subset\an$ be an asymptotically periodic subshift of period $p\in\Ns$. The open subset $U=\set x\Sigma{x_0=x_p}$ is a neighborhood of $\omega_\Sigma$. For any configuration $x\in\Sigma$, there is a generation $J\in\Nset$ such that for any $j\ge J$, $\sigma^j(x)_0=\sigma^j(x)_p$, \ie $x$ is $(J,p)$-preperiodic. The converse is immediate. 
\end{proof}

We can use the previous proposition to get a generalization of Proposition \ref{p:nilbloc} in that setting.

\begin{cor}\label{c:fnilpper4}
 Any \ssl{asymptotically periodic} onesided subshift is \ssl{almost equicontinuous}.
\end{cor}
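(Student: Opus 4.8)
The plan is to invoke Proposition~\ref{p:fnilpper2} to trade asymptotic periodicity of the onesided subshift $\Sigma$ for weak preperiodicity, so that every $x\in\Sigma$ satisfies $\sigma^{q+p}(x)=\sigma^q(x)$ for some $p\in\Ns$ and $q\in\Nset$; equivalently, the tail $x\sci q$ is $p$-periodic. The structural payoff I want from this is that $\Sigma$ is \emph{countable}: an eventually periodic configuration is entirely determined by the data $(q,p,x\sco0{q+p})$, and over a finite alphabet there are only countably many such triples.

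Next I would identify the equicontinuous points of $(\Sigma,\sigma)$ with the \emph{isolated} points of $\Sigma$. Using the trace reformulation of equicontinuity stated just before Proposition~\ref{p:eqtr}, a configuration $x$ is equicontinuous exactly when, for every $k\in\Nset$, there is $l$ with $T_\sigma\iexp k([x\isub l])$ a singleton. Reading this at $k=0$ already forces $[x\isub l]\cap\Sigma=\{x\}$, that is, $x$ is isolated; conversely an isolated point is trivially equicontinuous. Since every isolated point is an open singleton, the set $I$ of equicontinuous points is automatically \emph{open} in $\Sigma$, so it only remains to prove that $I$ is \emph{dense}.

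This is where countability pays off, through Baire's theorem. Given any nonempty central cylinder $C=[u]\cap\Sigma$, note that $C$ is itself a nonempty countable compact metric space, hence a Baire space, and $C=\bigcup_{x\in C}\{x\}$ is a countable union of closed singletons. Not all of them can be nowhere dense, so some $\{x\}$ has nonempty interior in $C$; since $C$ is open in $\Sigma$, this $x$ is isolated in $\Sigma$ and lies in $C$. Thus every nonempty basic open set meets $I$, so $I$ is a dense open subset of $\Sigma$, in particular a residual, which is exactly the assertion that $\Sigma$ is almost equicontinuous.

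The main obstacle I anticipate is not any single deep ingredient but getting the topological bookkeeping right: first, collapsing equicontinuity of the shift to bare isolation (so that the pointwise trace characterization is only ever needed at its weakest level $k=0$, since for the shift knowing a configuration on cells $\radi k$ at all times means knowing it everywhere), and second, applying Baire on each cylinder $C$ rather than on $\Sigma$ as a whole, so that the isolated point it produces genuinely lands in the prescribed neighborhood and thereby witnesses density.
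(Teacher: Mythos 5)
Your proof is correct, but after the common first step (invoking Proposition~\ref{p:fnilpper2}) it follows a genuinely different route from the paper's. The paper applies Baire's theorem to the decomposition $\Sigma=\bigcup_{j}\sigma^{-j}\left(\bigcap_{i}\set x\Sigma{x_i=x_{i+p}}\right)$ to extract a nonempty open set $U$ of points that all become $p$-periodic by a common time $J$, checks directly that every $x\in U$ is $\varepsilon$-stable, and then concludes by citing the external fact that a nonsensitive subshift is almost equicontinuous. You instead extract a \emph{structural} consequence of weak preperiodicity --- countability of $\Sigma$ (each point is determined by the finite data $(q,p,x\sco0{q+p})$) --- then identify the equicontinuous points of a onesided subshift with its isolated points (this uses expansiveness of $\sigma$: for $\varepsilon<1$, $\varepsilon$-stability forces $[x\isub l]\cap\Sigma=\{x\}$, and your remark that the width-one trace of a onesided shift recovers the whole configuration is exactly right), and finally run Baire on each nonempty cylinder $C=[u]\cap\Sigma$, which is compact and countable, to produce an isolated point inside $C$. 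Your version buys more: it is self-contained (no appeal to the nonsensitive-implies-almost-equicontinuous fact, whose statement in the paper is borrowed from K{\r{u}}rka-type results), and it yields a stronger conclusion --- the equicontinuous points form a dense \emph{open} set, equal to the isolated points, and $\Sigma$ itself is countable. What it costs is generality: the argument is welded to expansiveness of the shift and finiteness of the alphabet, so unlike the paper's stability computation (which parallels Proposition~\ref{p:nilbloc} and later adapts to TDDS in Proposition~\ref{p:perbloc}), it does not suggest any extension beyond subshifts.
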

\begin{proof} 
  Let $\Sigma$ be an asymptotically periodic onesided subshift of period $p\in\Ns$, and $\varepsilon>0$. By Proposition \ref{p:fnilpper2}, $\Sigma=\bigcup_{j\in\Nset}F^{-j}(\bigcap_{i\in\Nset}\set x\Sigma{x_i=x_{i+p}})$. By Baire's theorem, there is some finite time $J\in\Nset$ and some nonempty open set $U\subset\bigcup_{j<J}F^{-j}(\bigcap_{i\in\Nset}\set x\Sigma{x_i=x_{i+p}})$. If $x\in U$, then the intersection $V= U\cap\bigcap_{0\le j<J+p}\sigma^{-j}(\ball\varepsilon{\sigma^j(x)})$ is still open. For any $y\in V$ and any generation $j\in\Nset$, we have $F^j(y)=F^{J+(j-J\bmod p)}(y)$ and $F^j(x)=F^{J+(j-J\bmod p)}(x)$; by construction, their distance is less than $\varepsilon$. Hence, $x$ is $\varepsilon$-stable. We conclude recalling that any nonsensitive subshift is almost equicontinuous.
\end{proof}

Actually, asymptotically periodic sofic subshifts are exactly those that have little simulation power, as suggested by the following proposition. We say that a subshift is \dfn{universal} if it can \ssl{simulate} all the other subshifts.
\begin{prop}\label{p:choix}
If $\Sigma$ is a sofic subshift, the following statements are equivalent.
\begin{enumerate}
\item\label{i:univs} $\Sigma$ is \ssl{not universal}.
\item\label{i:innum} $\Sigma$ is \ssl{countable}.
\item\label{i:ultper} $\Sigma$ is \ssl{asymptotically periodic}.
\item\label{i:inftrans} $\Sigma$ has no \ssl{infinite transitive subsystem}.
\item\label{i:infconnx} $\Sigma$ is the \ssl{label system} of some graph with no \ssl{non-cyclic strongly connected component}.
\end{enumerate}
\end{prop}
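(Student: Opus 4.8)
The plan is to prove the cyclic chain of implications \ref{i:univs}$\impl$\ref{i:inftrans}$\impl$\ref{i:ultper}$\impl$\ref{i:infconnx}$\impl$\ref{i:innum}$\impl$\ref{i:univs}, which makes all five statements equivalent. Four of the five links are comparatively direct and rest on material already developed. For \ref{i:inftrans}$\impl$\ref{i:ultper}, I would use that the asymptotic set of a sofic subshift is the disjoint union of its maximal transitive subsystems, read off a finite presentation graph: there are thus only finitely many of them, and if none is infinite then each is a finite transitive subshift, \ie a single periodic orbit. A finite union of periodic orbits is a finite set on which $\sigma$ is periodic, so $\Sigma$ is asymptotically periodic. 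The step \ref{i:ultper}$\impl$\ref{i:infconnx} is exactly the forward direction of the characterization recalled above, namely that a sofic subshift is asymptotically periodic precisely when it is the label system of a graph whose strongly connected components are all cycles, and can be cited as such.

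For \ref{i:infconnx}$\impl$\ref{i:innum}, I would count the infinite paths of a finite graph $\g$ all of whose strongly connected components are cycles. Since one cannot return to a component once it has been left, any such path visits only finitely many components, hence is eventually trapped forever in a terminal cycle in the forward direction (and, in the twosided case, in an initial cycle in the backward direction); it is therefore eventually periodic, up to a finite transient core. As a finite graph has only countably many finite paths, there are only countably many such infinite paths, so the label system $\Sigma=\Gamma_\g$ is countable. For \ref{i:innum}$\impl$\ref{i:univs} I would argue on cardinalities: a simulation by $\Sigma$ is a factor map of a subsystem of $(\Sigma,\sigma^n)$, and a factor map cannot increase cardinality, so a countable $\Sigma$ can only simulate countable subshifts; in particular it cannot simulate the uncountable full shift $\deux^\Mset$, and so fails to be universal.

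The remaining and hardest link is \ref{i:univs}$\impl$\ref{i:inftrans}, which I would prove by contraposition: assuming an infinite transitive subsystem, I would show $\Sigma$ is universal. Such a subsystem lies inside one maximal transitive subsystem (by Proposition~\ref{p:ulttrans} it is contained in $\omega_\Sigma$), which is the label system of a single strongly connected component and hence an infinite transitive sofic subshift. Passing to a strongly connected --- and, to keep label reading unambiguous, right-resolving --- presentation, infiniteness forbids the presentation from being a single cycle, so some vertex carries two loops with distinct labels; raising them to a common length $\ell$ yields two distinct words $u,v\in\lang(\Sigma)$ labelling loops at one vertex. The labels of all bi-infinite concatenations of these two loops form a subsystem of $(\Sigma,\sigma^\ell)$ that factors onto the full $2$-shift by the block decoding $u\mapsto0$, $v\mapsto1$, so $\Sigma$ totally simulates $\deux^\Mset$. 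Since every subshift encodes into the full $2$-shift by a fixed-length binary block code read with the matching step, and since simulations compose and a subsystem that simulates makes the whole system simulate, $\Sigma$ simulates every subshift, \ie is universal.

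The main obstacle is precisely this last construction: one must secure a presentation faithful enough that the block decoding is well defined and surjective onto the full shift (this is what the right-resolving hypothesis buys), and one must justify the two supporting facts that the full $2$-shift is universal and that simulation is transitive under composition. Once these are in place, the two distinct loops give the required full-shift factor, and the cycle of implications closes.
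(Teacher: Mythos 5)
Your proof is correct, and it closes the equivalence by a genuinely different cycle of implications: you prove \ref{i:univs}$\impl$\ref{i:inftrans}$\impl$\ref{i:ultper}$\impl$\ref{i:infconnx}$\impl$\ref{i:innum}$\impl$\ref{i:univs}, whereas the paper proves \ref{i:univs}$\impl$\ref{i:infconnx}$\impl$\ref{i:inftrans}$\impl$\ref{i:ultper}$\impl$\ref{i:innum}$\impl$\ref{i:univs}. The shared core is identical: the cardinality obstruction for \ref{i:innum}$\impl$\ref{i:univs}; the counting of infinite paths in a finite graph all of whose strongly connected components are cycles (the paper runs it as \ref{i:ultper}$\impl$\ref{i:innum} after invoking the graph characterization, you run it directly as \ref{i:infconnx}$\impl$\ref{i:innum}); the description of $\omega_\Sigma$ as the finite union of maximal transitive subsystems; and, for the hard step, two distinct equal-length loop labels $u\ne v$ at a common vertex yielding a subsystem of $(\Sigma,\sigma^{\length u})$ conjugate to the full $2$-shift, together with universality of the full $2$-shift via block coding and composition of simulations. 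The genuine difference is where the universality construction is anchored, and it matters. The paper contraposes \ref{i:univs}$\impl$\ref{i:infconnx} and asserts that a non-cyclic strongly connected component of an arbitrary presentation carries two out-arcs with distinct labels at some vertex; as stated this is false (a strongly connected component which is not a cycle but all of whose arcs bear the same letter presents a single periodic orbit), so that step tacitly requires a reduced or resolving presentation. You contrapose \ref{i:univs}$\impl$\ref{i:inftrans} instead, locate the infinite transitive subsystem inside one maximal transitive (hence irreducible sofic) subsystem, and pass to a strongly connected \emph{right-resolving} presentation, where infiniteness rules out a single cycle and right-resolving genuinely produces two arcs with distinct letters. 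Your route thus repairs the subtle point the paper glosses over, at the price of invoking the standard existence of right-resolving irreducible presentations (the Fischer cover), and of making explicit the two supporting facts (transitivity of simulation and universality of the full $2$-shift) that the paper also uses, the latter with the same $\spart{\log\card B}$-block encoding.
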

\begin{proof}~\begin{itemize}
\item[\ref{i:innum}$\impl$\ref{i:univs}:\ ]It is clear that a countable system cannot simulate an uncountable one (like a full shift on two letters).
\imp{ultper}{innum}If $\Sigma$ is asymptotically periodic, then we already noted that we can see it as the label system of a graph in which all the strongly connected components are cycles. Each configuration of $\Sigma$ has a path that changes of strongly connected component only a finite number of times. The tuple of the indices of the cells that correspond to these changes of component and of the corresponding arc determines in a unique way the configuration. $\Sigma$ is hence countable.
\imp{inftrans}{ultper}We have seen that $\omega_\Sigma$ is the union of the maximal transitive subshifts of $\Sigma$. Hence, if $\sigma\restr{\omega_\Sigma}$ is not periodic, then there is a transitive subsystem which is note periodic. On the other hand, it is known that transitive sofic subshifts are either cycles or infinite.
\imp{infconnx}{inftrans}We know that any transitive subsystem of a sofic subshift is exactly the set of labels of a strongly connected components of some corresponding graph.
\imp{univs}{infconnx}If $\Sigma$ is the label system of a graph $(V,E)$ with some non-cyclic strongly connected component, then there exists three vertices $v_0$, $v_1$ and $\tilde v_1$ in this component and two distinct letters $a$ and $b$ such that $(v_0,v_1,a),(v_0,\tilde v_1,b)\in V$. By strong connectivity, there exists two paths $(v_i,w_i,u_i)_{0\le i\le l}$ and $(\tilde v_i,\tilde w_i,\tilde u_i)_{0\le i\le k}$ of respective lengths $l,k\in\Ns$ such that $v_0=w_l=\tilde v_0=\tilde w_k=v_0$.
Let $u=(a\tilde u)^{\length{b\tilde v}}$ and $v=(b\tilde v)^{\length{a\tilde u}}$. We can see that $\dinf{(u+v)}$ is included in $\Sigma$. Moreover, it can be easily seen that $(\dinf{(u+v)},\sigma^{\length u})$ is conjugate to the full shift $(\deux^\Mset,\sigma)$.
The latter full shift is universal, since for any alphabet $B$, there is a trivial injection from $B$ into $A^{\spart{\log\card B}}$, which induces a conjugacy of any subshift over $B$ onto some subsystem of the iterate $(\deux^\Mset,\sigma^{\spart{\log\card B}})$.
\finish{proof}

Let us now concentrate on the case of asymptotic nilpotency, which in particular implies the previous consequences of asymptotic preperiodicity. Note that if all configurations of a sofic subshift converge towards the same configuration, then this configuration is uniform and the subshift can be seen as the label system of a graph in which all the cycles share the same label. More formally, a sofic subshift is \ssl{asymptotically nilpotent} if and only if it contains a \ssl{unic} \ssl{periodic} configuration, which is then \ssl{uniform}. Using some of the previous results, it can equivalently be said that a sofic subshift is \ssl{asymptotically nilpotent} if and only if it is the \ssl{label system} of a graph in which each \ssl{strongly connected component} is a single arc, and all of them have the same label.

Using the particular case $p=1$ in Proposition \ref{p:fnilpper2} gives us an equivalence between asymptotic nilpotency and weak nilpotency for onesided subshifts. This result in not true for general DDS.

The asymptotically nilpotent subshifts which are not nilpotent are actually rather complex. For instance, any \ssl{asymptotically nilpotent} subshift is an SFT if and only if it is \ssl{nilpotent}.

\subsection{Asymptotic set of cellular automata}\label{ss:ultca}

The intrinsic regularity of the model of CA allows more precise characterizations of asymptotic behaviors. We are going to present some of them, but maybe more are to be expected.
The first easy remark is that the asymptotic set of a CA is shift-invariant. However, unlike the limit set, it need not be a subshift (see below)  and can be arbitrarily complex (see for example \cite{acc}).

\begin{exmp}[Non-closed asymptotic set]
This example is due to Matthieu Sablik. Consider a CA with six states: particle going to the left, particle going to the right, wall, L, R, killer. A particle makes rebounds between walls, ensuring that it is the only particle between two walls, that the cells between the left wall and itself are in state L, and that the cells between itself and the right wall are in state R. If the configuration is not well formed, a killer state appears and spreads towards both sides. Any configuration with a singe particle between two walls, with L on its left and R on its right is actually periodic. If we look at a sequence of such configurations where the walls are further and further from the central cell -- still containing the same particle -- then it converges to the configuration where the particle is between only L on its left and only R on its right. It is easy to see that this configuration cannot be the adhering value of any orbit.
\end{exmp}

Of course, any \ssl{quiescent} configuration being the limit of its own orbit, it is in the \ssl{asymptotic set}. Moreover, since the uniform configurations constitute a subsystem, we can see that there is always at least one \ssl{uniform} configuration in the \ssl{asymptotic set}.

The homogeneity of the CA makes the space somehow so rigid that they satisfy an analogous of Poincaré's theorem.
\begin{thm}[Bernardi \cite{bernardi}]
A CA is \ssl{surjective} if and only if its set of \ssl{recurrent} configurations is dense.
\end{thm}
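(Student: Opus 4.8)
The plan is to prove the two implications separately, exploiting the fact that the nontrivial topological dynamics has already been packaged into the earlier propositions, so that only one genuinely new ingredient — the measure-preserving property of surjective cellular automata — is needed.

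I would first dispose of the implication from density of recurrent configurations to surjectivity, which is purely formal. Recall the chain of inclusions established earlier: the set of recurrent points is contained in the asymptotic set $\omega_F$, which in turn is contained in the limit set $\Omega_F$. Hence, if the recurrent configurations are dense in $\am$, then $\Omega_F$ contains a dense subset; being closed, $\Omega_F$ must equal all of $\am$. Since $\Omega_F=\am$ holds exactly when $F$ is onto, this gives surjectivity. No properties specific to cellular automata are used here, only the general inclusions and the closedness of the limit set.

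For the converse I would realize the \emph{Poincaré-type} recurrence alluded to just above the statement. The key input is the classical balance theorem of Hedlund \cite{Hedlund:1969}: a surjective CA is balanced and therefore preserves the uniform Bernoulli measure $\mu$ on $\am$, meaning $\mu(F^{-1}(A))=\mu(A)$ for every Borel set $A$. Since $\mu$ assigns positive mass to every cylinder, it has full support. From this I would deduce that $F$ is nonwandering: given any nonempty open set $U$ one has $\mu(U)>0$, and all the preimages $F^{-t}(U)$ share the same positive measure $\mu(U)$, so in a probability space they cannot be pairwise disjoint. Picking $i<j$ with $F^{-i}(U)\cap F^{-j}(U)\neq\emptyset$ and applying $F^i$ yields a point $y\in U$ with $F^{j-i}(y)\in U$ and $j-i>0$, which is precisely the nonwandering condition for $U$. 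As $U$ was arbitrary, $F$ is nonwandering.

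Finally, Proposition \ref{p:recautotr} shows that a nonwandering DDS has a residual set of recurrent points, and a residual set is dense by Baire's theorem; this completes the direction. The main obstacle is this second implication, and within it the only substantive ingredient is the balance/measure-preservation property of surjective cellular automata — the homogeneity that makes the uniform measure invariant. Everything downstream (nonwandering $\Rightarrow$ residual recurrence $\Rightarrow$ dense recurrence) is soft and already available from the earlier development.
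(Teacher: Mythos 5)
Your proof is correct, but there is nothing in the paper to compare it against: the theorem is stated as an imported result of Bernardi \cite{bernardi} with no proof given, the text only remarking afterwards that the resulting equivalence between surjectivity and nonwanderingness was also proved via ergodic theory in \cite{topca}. Your argument is essentially that ergodic-theoretic route, made self-contained. The soft direction is fine and uses only facts the paper has already recorded: recurrent points lie in $\omega_F$, $\omega_F\subset\Omega_F$, $\Omega_F$ is closed, and $\Omega_F=\am$ exactly when $F$ is onto. The substantive direction is also sound: Hedlund's balance theorem does give invariance of the uniform Bernoulli measure $\mu$ under any surjective CA (this holds for $\Mset=\Nset$ as well as $\Mset=\Zset$, since surjectivity of the global map is equivalent to surjectivity of all the induced word maps, and the counting argument is the same); $\mu$ has full support, so for a nonempty open $U$ the equal-measure sets $F^{-t}(U)$ cannot be pairwise disjoint, and pushing forward an intersection point by $F^i$ produces $y\in U$ with $F^{j-i}(y)\in U$, $j-i>0$, which is precisely the paper's open-set formulation of nonwanderingness; Proposition \ref{p:recautotr} then gives a residual set of recurrent configurations, which is dense by Baire. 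What your route buys is a short, complete proof resting on a single external ingredient (balance, hence measure preservation, for surjective CA), consistent with the paper's framing of the theorem as an analogue of Poincar\'e recurrence; what the paper does instead is simply outsource the whole statement to \cite{bernardi}, so your write-up actually fills a gap rather than duplicating an argument.
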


In particular, from Proposition \ref{p:recautotr}, a CA is \ssl{surjective} if and only if it is \ssl{nonwandering}. This fact was also proved via ergodic theory in \cite{topca}. Thus it is possible to link CA surjectivity and asymptotic set.
\begin{cor}
A CA is \ssl{surjective} if and only if its \ssl{asymptotic set} is a \ssl{residual set}.
\end{cor}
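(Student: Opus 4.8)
The plan is to obtain this corollary by simply chaining two equivalences already established in this section, so I would not prove anything from scratch. The first equivalence is the remark stated immediately before the corollary: combining Bernardi's theorem (a CA is surjective if and only if its recurrent configurations are dense) with Proposition \ref{p:recautotr} (nonwanderingness yields a residual---hence dense---set of recurrent points) gives that a CA is surjective if and only if it is nonwandering. The second equivalence is the general characterization derived earlier from the inclusions between recurrent, asymptotic, and nonwandering points, namely that a DDS $(X,F)$ is nonwandering if and only if its asymptotic set $\omega_F$ is a residual subset of $X$.

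Then I would observe that a cellular automaton $(\am,F)$ is in particular a DDS on the compact metric space $\am$, which is a Baire space, so both equivalences apply verbatim. Chaining them yields the desired statement: $F$ is surjective $\iff$ $F$ is nonwandering $\iff$ $\omega_F$ is residual in $\am$.

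I do not expect any genuine obstacle, since the mathematical content has already been packaged into the two cited facts. The only point deserving a moment of attention is that the second characterization was proved at the level of abstract DDS; but its proof relied solely on compactness of the space and continuity of the map, both of which hold for $(\am,F)$, so the transfer to the CA setting is immediate.
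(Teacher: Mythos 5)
Your proposal is correct and follows exactly the paper's intended derivation: the paper states the corollary immediately after noting that Bernardi's theorem together with Proposition \ref{p:recautotr} yields \xpr{surjective iff nonwandering}, and the link to the asymptotic set is precisely the earlier DDS characterization \xpr{nonwandering iff $\omega_F$ residual}. Chaining these two packaged equivalences, as you do, is the paper's own (implicit) proof.
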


This characterization is not as strong as that we have on limit sets; one can wonder if an equivalent one could be found. Surprisingly, this simple question is still open.
\begin{open}
Does there exist a \ssl{surjective} CA whose \ssl{asymptotic set} is not \ssl{full}?
\end{open}

If the asymptotic set is a global notion, the trace is a local observation of the behavior. In this way, it is easy to think that if some global behavior (asymptotic periodicity, asymptotic nilpotency, \ldots) is true, then the local observation (\ie, the traces) will show the same behavior. The more interesting question is whether the converse is true or not. A first result of that kind is the following proposition.
 \begin{prop}
 If $p\in\Ns$, then a TDDS $(\Sigma,F)$ is \ssl{asymptotically $p$-periodic} if and only if all of its traces $\tau\iexp k_F$, for $k\in\Nset$, are \ssl{weakly $p$-preperiodic}.
\end{prop}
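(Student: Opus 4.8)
The plan is to reduce everything to the already-understood onesided case through the trace application. The key observation is that each $T\iexp k_F$ is a factor map from $(\Sigma,F)$ onto the onesided subshift $(\tau\iexp k_F,\sigma)$, hence a direct, total and complete simulation; Proposition~\ref{p:ultsim} then carries the asymptotic set across, giving $T\iexp k_F(\omega_F)=\omega_{\tau\iexp k_F}$ for every $k\in\Nset$. I will also use repeatedly the morphism identity $\sigma\circ T\iexp k_F=T\iexp k_F\circ F$ and its iterate $\sigma^p\circ T\iexp k_F=T\iexp k_F\circ F^p$.

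For the direct implication I would assume $F\restr{\omega_F}^p=\id$ and fix $k\in\Nset$. Any $y\in\omega_{\tau\iexp k_F}$ can be written $y=T\iexp k_F(x)$ with $x\in\omega_F$, so $\sigma^p(y)=T\iexp k_F(F^p(x))=T\iexp k_F(x)=y$; thus $\tau\iexp k_F$ is asymptotically $p$-periodic. Being a onesided subshift, Proposition~\ref{p:fnilpper2} makes it weakly preperiodic, and its proof (which exhibits each configuration as $(J,p)$-preperiodic) yields precisely weak $p$-preperiodicity.

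For the converse I would assume each $\tau\iexp k_F$ is weakly $p$-preperiodic. First I note that a weakly $p$-preperiodic onesided subshift is asymptotically $p$-periodic: every orbit is eventually $p$-periodic, so its adhering values lie on $p$-cycles, forcing $\sigma^p=\id$ on its asymptotic set (this is the period-tracking form of the converse of Proposition~\ref{p:fnilpper2}). Hence $\sigma^p\restr{\omega_{\tau\iexp k_F}}=\id$ for every $k$. Now take $x\in\omega_F$; then $T\iexp k_F(x)\in\omega_{\tau\iexp k_F}$, so $T\iexp k_F(F^p(x))=\sigma^p(T\iexp k_F(x))=T\iexp k_F(x)$. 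Reading this identity of sequences at time $t=0$ gives $F^p(x)\isub k=x\isub k$, and since this holds for every radius $k\in\Nset$ the central patterns of $F^p(x)$ and $x$ agree at all widths, so $F^p(x)=x$. As $x\in\omega_F$ was arbitrary, $F\restr{\omega_F}^p=\id$, that is, $(\Sigma,F)$ is asymptotically $p$-periodic.

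The routine parts are the two applications of the commutation identity. The point requiring care is the bookkeeping of the \emph{exact} period $p$ through Proposition~\ref{p:fnilpper2}, whose statement is phrased without a period, so I must invoke the period-preserving content of its proof in both directions. The genuinely essential structural step is the reconstruction in the converse: a single trace cannot detect $F^p(x)=x$, and it is exactly the use of finer and finer traces (all radii $k$ simultaneously) that recovers the full equality of configurations from the coordinate-wise information.
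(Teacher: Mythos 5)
Your proof is correct and takes essentially the same route as the paper: both directions rest on the trace maps being factor maps, on Proposition \ref{p:ultsim}, on the period-preserving content of Proposition \ref{p:fnilpper2}, and on recovering $F^p(x)=x$ on $\omega_F$ from all radii $k$ simultaneously. The only difference is organizational: where the paper establishes the $p$-periodicity of $T_F\iexp k(x)$ by a direct continuity argument along an orbit admitting $x$ as adhering value, you obtain it from the inclusion $T_F\iexp k(\omega_F)\subseteq\omega_{\tau_F\iexp k}$ together with the (immediate) converse of Proposition \ref{p:fnilpper2} --- the same underlying argument, repackaged into citations.
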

\begin{proof}
 Let $x\in\omega_F$, \ie there exists a configuration $y$ whose orbit $\orb_F(y)$ admits $x$ as an adhering value. If any trace is $p$-preperiodic, then for any $k\in\Nset$, there exists a generation $j\in\Nset$ such that $T_F\iexp kF^j(y)$ is $p$-periodic. By continuity of the trace, $T_F\iexp k(x)$ is $p$-periodic. Putting things together, $x$ is $p$-periodic.
 The converse comes from the preservation of the asymptotic periodicity by factor maps and from Proposition \ref{p:fnilpper2}.  
\end{proof}

For the specific case of PCA, the homogeneity of the rule imposes every trace to be preperiodic as soon as the trace of width $1$ is, and we have the following stronger statement.  \begin{prop}
  If $p\in\Ns$, then a PCA $F$ is \ssl{asymptotically $p$-periodic} if and only if its \ssl{trace} $\tau_F$ is \ssl{weakly $p$-preperiodic}.
\end{prop}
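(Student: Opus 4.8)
The plan is to piggyback on the preceding proposition, which already characterises asymptotic $p$-periodicity of any TDDS through weak $p$-preperiodicity of \emph{all} its central traces $\tau_F\iexp k$, $k\in\Nset$. So the whole point of the PCA version is to show that, thanks to homogeneity, weak $p$-preperiodicity of the single width-one trace $\tau_F=\tau_F\iexp0$ already forces weak $p$-preperiodicity of every $\tau_F\iexp k$; then the reduction is automatic.

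One direction is free: if $F$ is asymptotically $p$-periodic, the preceding proposition makes all traces weakly $p$-preperiodic, in particular $\tau_F$. So I would spend all the effort on the converse. Assuming $\tau_F$ weakly $p$-preperiodic, I would fix $k\in\Nset$ and a configuration $x\in\Sigma$, and read off, for each cell $i\in\radi k$, the single-cell trace $(F^t(x)_i)_{t\in\Nset}$. The key observation to deploy is that commutation with $\sigma$ identifies this sequence with $T_F\iexp0(\sigma^ix)$, and $\sigma^ix$ lies in $\Sigma$ (using $i\ge0$ in the onesided case, strong $\sigma$-invariance in the twosided case); hence weak $p$-preperiodicity of $\tau_F$ applies and yields a preperiod $q_i\in\Nset$ with $F^{t+p}(x)_i=F^t(x)_i$ for all $t\ge q_i$.

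Then I would uniformise by setting $q=\max_{i\in\radi k}q_i$, which is finite because $\radi k$ is finite. For $t\ge q$ this gives $F^{t+p}(x)\isub k=F^t(x)\isub k$ cell by cell, so $T_F\iexp k(x)$ is $(p,q)$-preperiodic; since $x$ and $k$ are arbitrary, every $\tau_F\iexp k$ is weakly $p$-preperiodic, and the preceding proposition closes the argument.

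The delicate point is exactly this uniformisation step. The individual cells of a fixed configuration may settle into their eventual cycle at wildly different times $q_i$, and those times are not bounded independently of the configuration; what saves the argument is that all cells share the \emph{same} period $p$ and that only finitely many cells sit in $\radi k$, so a single finite preperiod controls the whole central block. This is the homogeneity phenomenon already flagged informally just before Corollary~\ref{c:equiprep}; I expect everything else to be routine bookkeeping against the preceding proposition.
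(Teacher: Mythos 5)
Your proof is correct and follows exactly the route the paper intends: the paper states this proposition without a separate proof, relying on the remark that homogeneity makes every trace preperiodic as soon as the width-one trace is, and then on the preceding TDDS proposition. Your argument --- identifying the trace of cell $i$ with $T_F\iexp0(\sigma^ix)$ via shift-commutation (with the correct caveat about onesided versus strongly invariant twosided subshifts) and uniformising the finitely many preperiods over $\radi k$ --- is precisely the bookkeeping the paper leaves implicit.
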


Nevertheless, the preperiod cannot be made uniform: the CA $\Min$, for instance, is asymptotically periodic but not weakly preperiodic. Even imposing a unique ultimate periodic word in each trace cannot help get a bounded preperiod, as illustrated by the following example.
\begin{exmp}[Non-preperiodic CA of weakly preperiodic trace]\label{x:p012}
Let $F$ bet he CA defined on alphabet $\trois$, with anchor $1$, diameter $4$ by the following local rule: \[\appl f{\trois^4}\trois{(x_{-1},x_0,x_1,x_2)}{\soit{x_{-1}+1\bmod3&\textrm{ if } x_{-1}\ne x_0\ne x_1=x_2~;\\x_0+1\bmod3&\textrm{ otherwise}.}}\]

 By recurrence, one can see that two consecutive cells in the same state will always keep an identical state. In particular, a cell that applies the first part of the rule gets the same state as its right neighbor, and both of them will never apply the second part of the rule after that. As a conclusion, $\tau_F\subset{\orb_\sigma((012)^*(02+12+01)\uinf(012))}$ is \ssl{weakly $3$-preperiodic}.  

 Let $\Phi$ the simulation by $(\Sigma_K\subset\trois^\Zset,\sigma^2)$ of $(\deux^\Nset,\sigma)$ defined:

\begin{eqnarray*}
 \textrm{ on subshift }\Sigma_K\textrm{ of forbidden language }&
 \ K=\bigcup\suba{a,b\in\trois\\k\in\Nset}aaA^{2k+1}bb\cup\{000,111,222\}\\
 \textrm{ by the local rule }&
 \ \phi:(x_0,x_1)\mapsto\soit{1&\textrm{ if } x_0\ne x_1~;\\0&\textrm{ otherwise}.}
\end{eqnarray*}
 From the definition of $f$, $\Phi$ is a conjugacy of $F\restr{\Sigma_K}$ into $\Min$. In particular, $F$ simulates a CA that is \ssl{not preperiodic}. Therefore, neither can $F$ be preperiodic.

\end{exmp}

Total disconnection allows a generalization of Proposition \ref{p:nilbloc}.
\begin{prop}\label{p:perbloc}
 No asymptotically periodic TDDS is sensitive.
\end{prop}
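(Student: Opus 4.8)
The plan is to follow the scheme of Proposition~\ref{p:nilbloc}, overcoming the fact that orbits now converge to a whole \emph{set} of periodic points rather than to a single limit. By the very definition of sensitivity it suffices to exhibit, for each $\varepsilon>0$, one $\varepsilon$-stable configuration: this shows that $F$ is not $\varepsilon$-sensitive for any $\varepsilon$, hence not sensitive.

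First I would reduce to a single period step. Writing $G=F^p$ for an asymptotically $p$-periodic $F$, Proposition~\ref{p:ultsim} gives $\omega_{G}=\omega_{F}$, and since $F\restr{\omega_F}$ is $p$-periodic, every point of $\omega_G$ is a fixed point of $G$. Because $F^0,\dots,F^{p-1}$ are uniformly continuous on the compact space $\Sigma$, any configuration that is $\eta$-stable for $G$, with $\eta$ below a common modulus of continuity of $F^0,\dots,F^{p-1}$ for the target $\varepsilon$, is automatically $\varepsilon$-stable for $F$ (decompose $t=pq+r$ with $0\le r<p$ and push the $\eta$-closeness of $G^q x,G^q y$ through $F^r$). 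So it is enough to produce, for every $\eta>0$, an $\eta$-stable point for $G$.

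Fix $\eta>0$, choose $k$ with $2^{-k}\le\eta$, and partition $\Sigma$ into the finitely many clopen central cylinders $[u]\cap\Sigma$, $u\in A\iexp k$, each of diameter at most $\eta$. The key lemma, and the main obstacle, is that \emph{every} $G$-orbit eventually stays inside a single piece of this partition; equivalently, the $\omega$-limit set $Y=\Omega_G(\{y\})$ of each orbit — which by hypothesis consists only of $G$-fixed points — is a single point. This is exactly where total disconnection is essential: I would show $Y$ is \emph{connected}, whence it is a singleton as a connected subset of the totally disconnected $\Sigma$. For connectedness, suppose $Y=Y_1\sqcup Y_2$ with $Y_1,Y_2$ nonempty, closed and at distance $3\rho>0$. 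For $t$ large the orbit lies within $\rho'$ of $Y$ (else a subsequence yields an adhering value off $Y$), and both $Y_1,Y_2$ are visited infinitely often. Picking $\rho'<\rho$ small enough, by uniform continuity of $G$, that $\dist(z,a)<\rho'$ forces $\dist(Gz,a)=\dist(Gz,Ga)<\rho$ for a fixed point $a$, a one-step transition from the $\rho'$-neighborhood of $Y_1$ into that of $Y_2$ would put some $b\in Y_2$ within $\rho'+\rho<3\rho$ of some $a\in Y_1$, which is impossible. So past some time the orbit can never cross between the two neighborhoods, and only one of $Y_1,Y_2$ is visited infinitely often, a contradiction.

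Granting the lemma, every orbit of $G$ eventually lies in one piece, so $\Sigma=\bigcup_{J\in\Nset}\bigcup_{u\in A\iexp k}\bigcap_{t\ge J}G^{-t}([u]\cap\Sigma)$ is a countable union of closed sets. By Baire's theorem one of them, say $\bigcap_{t\ge J}G^{-t}([u]\cap\Sigma)$, contains a nonempty open set $U$. Taking $x\in U$ and a ball $\ball\delta x\subseteq U\cap\bigcap_{0\le t<J}G^{-t}(\ball\eta{G^t(x)})$ concludes as in Proposition~\ref{p:nilbloc}: for $t<J$ the closeness is built in, and for $t\ge J$ both $G^t(x)$ and $G^t(y)$ lie in the same cylinder of diameter at most $\eta$. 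Hence $x$ is $\eta$-stable for $G$, and the reduction above turns it into an $\varepsilon$-stable point for $F$. I expect the connectedness of the $\omega$-limit set to be the only delicate step; the Baire extraction and the passage from $G$ back to $F$ are routine adaptations of the nilpotent case.
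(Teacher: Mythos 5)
Your argument is correct, but it takes a genuinely different route from the paper. The paper's proof is a three-line reduction through the trace machinery: by Proposition~\ref{p:senstr} sensitivity passes to sufficiently thin traces, by Proposition~\ref{p:ultsim} asymptotic periodicity passes to the traces as well (trace applications are factor maps, hence complete simulations), and this contradicts Corollary~\ref{c:fnilpper4}, which says that an asymptotically periodic onesided subshift is almost equicontinuous. You instead work directly at the level of the TDDS, generalizing the Baire argument of Proposition~\ref{p:nilbloc}: you pass to $G=F^p$ (using $\omega_{F^p}=\omega_F$ from Proposition~\ref{p:ultsim}), prove that every orbit of $G$ converges to a single fixed point --- the $\omega$-limit set of an orbit consists of fixed points, cannot be split into two closed pieces at positive distance by your one-step transition argument, hence is connected, hence a singleton by total disconnection --- and then run Baire's theorem on the closed sets $\bigcap_{t\ge J}G^{-t}([u]\cap\Sigma)$ given by the clopen cylinder partition. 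Both steps of your proof are sound, including the quantifier structure (one $\varepsilon$-stable point for each $\varepsilon$ suffices) and the uniform-continuity transfer from $G$-stability to $F$-stability. What each approach buys: the paper's proof is essentially free once the trace results are in place, and it localizes all the work in the subshift world, where asymptotic periodicity even upgrades to \emph{finite-time} reaching of the periodic part (Proposition~\ref{p:fnilpper2}, because the set $\{x : x_0=x_p\}$ is clopen); your proof is self-contained, avoids the detour through traces, and its key lemma --- every orbit of an asymptotically $p$-periodic TDDS converges under $F^p$ --- is the natural TDDS analogue of Proposition~\ref{p:fnilpper2} (convergence replacing finite-time reaching) and is of independent interest; it also makes visible exactly where total disconnection is needed, namely twice: connected limit sets collapse to points, and the space admits a clopen finite partition of small diameter.
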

\begin{proof}
Suppose that $(\Sigma,F)$ is a sensitive asymptotically periodic TDDS. By Propositions \ref{p:senstr} and \ref{p:ultsim}, all of its traces sufficiently thin also have both properties, which contradicts Corollary \ref{c:fnilpper4}.
\end{proof}

Similarly to nilpotency and weak nilpotency, we can see, thanks to shift-invariance of the asymptotic set, that any configuration $z$ such that some CA is asymptotically $z$-nilpotent is uniform. We will speak of asymptotically $0$-nilpotent CA, where $0$ is a quiescent state of $A$.

We can use Proposition \ref{p:ultsim} to deduce that a TDDS $F$ is \ssl{asymptotically nilpotent} if and only if all of its traces are \ssl{weakly nilpotent}. This result can be simplified if $F$ is a PCA, since each projection of $\tau_F\iexp k$ coincides with the trace $\tau_F$. Hence $F$ is \ssl{asymptotically nilpotent} if and only if its \ssl{trace} $\tau_F$ is \ssl{weakly nilpotent}. We are going to prove that, in the case of one-dimensional CA, asymptotic nilpotency is a very strong property, equivalent to nilpotency. 
 \begin{lem}\label{l:chevauchements}
 Let $F$ a PCA on some twosided SFT $\Sigma$, such that for any generation $j\in\Nset$, there exists some \ssl{$0$-finite} \ssl{$0$-nilpotent} configuration $x\in\Sigma$ such that $F^j(x)\ne\dinf0$. Then $F$ is \ssl{not asymptotically $0$-nilpotent}.
\end{lem}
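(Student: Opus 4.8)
By the trace characterisation of asymptotic nilpotency for a PCA established above, $F$ is asymptotically $0$-nilpotent if and only if its central trace $\tau_F$ is weakly $0$-nilpotent, i.e.\ for every $x\in\Sigma$ one has $F^t(x)_0=0$ for all sufficiently large $t$. So to prove that $F$ is \emph{not} asymptotically $0$-nilpotent it suffices to exhibit a single configuration $c\in\Sigma$ for which a fixed cell, say $0$, is nonzero at infinitely many generations: then, by compactness, $\orb_F(c)$ has an adhering value $z$ with $z_0\ne0$, hence $z\in\omega_F\setminus\{\dinf0\}$. The whole difficulty is thus to build one orbit keeping a fixed cell busy forever out of configurations that individually die.

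\textbf{Building blocks.} Using the hypothesis I first extract, for each $n$, a $0$-finite $0$-nilpotent configuration $x^{(n)}\in\Sigma$ whose last nonzero generation $L_n$ is strictly increasing, so that $L_n\to\infty$. Picking a cell that is nonzero at a late generation of $x^{(n)}$ and translating by the shift, I obtain a $0$-finite $0$-nilpotent $\tilde x^{(n)}$ together with a generation $t_n\to\infty$ such that $F^{t_n}(\tilde x^{(n)})_0\ne0$. Each $\tilde x^{(n)}$ has finite support, and since $0$ is quiescent it is flanked, on both sides, by columns of cells that remain $0$ at every generation; these quiescent zero-columns are exactly what will let me glue copies together through Proposition~\ref{p:concat}.

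\textbf{Amalgamation.} The plan is then to superpose spatially shifted copies of the $\tilde x^{(n)}$ into one configuration $c$, placing the longer-lived copies farther from the origin, so that two things hold simultaneously: (i) the activity of copy $n$ reaches the neighbourhood of cell $0$ only during a time window around $t_n$, and as $n$ ranges over $\Nset$ these windows cover all sufficiently large generations; and (ii) the supports of any two copies are separated by zero-columns that stay $0$ long enough that, by Proposition~\ref{p:concat}, the global evolution of $c$ coincides on the relevant space-time region with the evolution of the isolated copy. Granting (i)–(ii), cell $0$ is nonzero at a diverging set of generations, and the compactness step above finishes the proof.

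\textbf{Main obstacle.} The hard part is precisely the scheduling in (i)–(ii) — the \emph{overlaps}. Because $F^t(c)_0$ depends on all initial cells within radius $r\,t$ of the origin, the successive copies unavoidably compete for the cells near $0$: one cannot merely space them apart, since spreading all activity off to infinity would instead force $\orb_F(c)$ to converge to $\dinf0$. The technical heart is therefore to stagger the copies so that their active periods at the observation cell tile a cofinite set of generations while their mutual interactions take place only inside quiescent regions: I would spread the lifespans fast enough (e.g.\ geometrically) that the expanding activity of a copy invades a neighbour's location only after that neighbour has already died, so that the superposition remains a legitimate gluing in the sense of Proposition~\ref{p:concat}. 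Managing these overlaps without destroying the late nonzero events is where the determinism and shift-invariance specific to cellular automata are essential, and it is the step I expect to require the most care.
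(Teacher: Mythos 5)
Your reformulation and the final compactness step are fine, and your instinct that the crux lies in managing overlaps via Proposition~\ref{p:concat} is correct. But the amalgamation as you describe it cannot be carried out, and the gap is not a matter of technical care. The hypothesis only guarantees, for each $n$, a configuration $\tilde x^{(n)}$ whose late nonzero event occurs at its \emph{own} central cell. The moment you shift copy $n$ far from the origin to make room for the other copies, its guaranteed event moves away from cell $0$ with it: nothing in the hypothesis forces the activity of a remote, dying, $0$-finite configuration ever to reach cell $0$, let alone to be nonzero there in a prescribed time window around $t_n$. So requirement (i) of your plan has no justification; worse, it is in direct tension with requirement (ii): if the copies really evolve independently (which is what the quiescent separating columns and Proposition~\ref{p:concat} buy you), then each copy's activity stays confined to its own region, the value of cell $0$ is governed by whichever single copy sits at the origin, and the remote copies contribute nothing at cell $0$, ever. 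Juxtaposing disjointly supported, independently evolving, dying configurations can only produce a dying observation at cell $0$.

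The missing idea --- the heart of the paper's proof --- is to argue by \emph{contradiction} and use the assumed asymptotic nilpotency itself to restructure the building blocks. Assuming $\omega_F=\{\dinf0\}$, compactness gives a uniform time $J$ by which every configuration enters $[0\iexp k]$; applying this to a configuration of the hypothesis surviving beyond $k+J$, and moving along its orbit, one obtains for every $k$ a $0$-finite $0$-nilpotent configuration with an arbitrarily large central hole of zeros whose late nonzero event still occurs at cell $0$ --- that is, the late activity comes from far away and propagates \emph{inward}. This is what makes gluing possible: the paper nests configurations agreeing on larger and larger central blocks (the new configuration's data lives outside the hole while the old one occupies the center, and by Proposition~\ref{p:concat} together with the SFT condition the two evolutions do not interfere before the old one is dead), rather than spacing shifted copies apart. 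The limit of this nested sequence keeps cell $0$ nonzero infinitely often, contradicting the weak nilpotency of the trace $\tau_F$ implied by asymptotic nilpotency. Without the contradiction hypothesis you have no access to these ``holed'' configurations, and without them the scheduling problem you correctly identify as the main obstacle is unsolvable.
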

\begin{proof}
  Assume $F$ has radius $r\in\Nset$ and is asymptotically nilpotent.

 Let us first show that the configuration can be taken with \xpr{holes}, \ie for any $k\in\Nset$, there is a $0$-finite $0$-nilpotent configuration $x'\in[0\iexp k]$ and a generation $j>k$ such that $F^j(x')_0\ne0$. Indeed, by asymptotic nilpotency and compactness, there exists a generation $J\in\Nset$ such that $\forall x\in\Sigma,\exists j<J,F^j(x)\in[0\iexp k]$. By hypothesis, and maybe thanks to a composition by a shift, there exists a $0$-finite $0$-nilpotent configuration $x$ such that $F^{k+J}(x)_0\ne0$; hence there exists a configuration $x'=F^j(x)\in[0\iexp k]$, that is still $0$-finite and $0$-nilpotent (as are all the configurations of the orbit of $x$), such that $F^{k+J-j}(x')_0\ne0$, with $j<J$ and hence $k+J-j>k$. 

  Let us now show that if $x\in\Sigma$ is a $0$-finite $0$-nilpotent configuration and $k\in\Nset$, then there exists a $0$-finite $0$-nilpotent configuration $y\in[x\isub{rk}]$ such that $\set j\Nset{F^j(y)_0\ne0}\supsetneq\set j\Nset{F^j(x)_0\ne0}$. Indeed, $F^n(x)=\dinf0$ for some generation $n\in\Nset$. $k$ can be enlarged so that we can suppose that $x\in\pinf0[A^{2r(k-2n)}]\pinf0$. The previous point gives a $0$-finite $0$-nilpotent configuration $x'\in[0\iexp{2rk}]$ and a generation $j>k$ such that $F^j(x')_0\ne0$. $n$ can be enlarged so that one can assume that $\Sigma$ is a $2n$-SFT; consequently, it contains the configuration $y=x'\soo{-\infty}{-rk}[x\isub{rk}]x'\soo{rk}{\infty}$. By an immediate recurrence on generation $j\le n$, we can see that $F^j(y)_i=F^j(x')_i\textrm{ if }\abs i>r(k-2n-j)$ and $F^j(y)_i=F^j(x)_i\textrm{ if }\abs i\le r(k-2n+j)$. In particular: \[\set j\Nset{F^j(y)_0\ne0}\cap\cc0n=\set j\Nset{F^j(x)_0\ne0}\cap\cc0n=\set j\Nset{F^j(x)_0\ne0}~.\] On the other hand, since $F^n(x)\isub{r(k-n)}=0^{2r(k-n)}=F^n(x')\isub{r(k-n)}$, one can see that $F^n(y)=F^n(x')$. By construction, there is a generation $j\ge k>n$ such that $F^j(y)_0=F^j(x')_0\ne0$. As a result, $\set j\Nset{F^j(y)_0\ne0}\supsetneq\set j\Nset{F^j(x)_0\ne0}$. 

  Therefore, we can inductively build a sequence $(y^k)_{k\in\Nset}$ of $0$-finite $0$-nilpotent configurations, with $x^0=\dinf0$ and for any $k\in\Nset$, $x^{k+1}\in[x^k\isub{r(k+1)}]$ and $\set j\Nset{F^j(x^{k+1})_0\ne0}\supsetneq\set j\Nset{F^j(x^k)_0\ne0}$. This sequence converges towards the configuration $x\in\bigcap_{k\in\Nset}[x^k\isub{r(k+1)}]$, which is such that $\set j\Nset{F^j(x)_0\ne0}$ contains $\set j\Nset{F^j(y^k)_0\ne0}$ for any $k\in\Nset$ (by continuity of the trace application). This sequence of sets being strictly increasing, $\set j\Nset{F^j(x)_0\ne0}$ is infinite, \ie the trace $\tau_F$ is not weakly $0$-nilpotent.  
\end{proof}

With this lemma, we can prove the already mentioned theorem linking the global behavior and asymptotic behavior of the radius $1$ traces. This generalizes a result presented in \cite{nilpeng}.
\begin{thm}\label{t:fnilptr}
 Any \ssl{asymptotically $0$-nilpotent} PCA on a transitive SFT is \ssl{$0$-nilpotent}.
\end{thm}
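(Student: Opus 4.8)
The plan is to argue by contradiction, reducing everything to the central trace and then invoking Lemma~\ref{l:chevauchements}. First I would perform the standard normalizations: replacing $F$ by a suitable power $F^p$ (with $p\in\co0{\card A}$) so that $0$ is quiescent and the set of $0$-finite configurations is $F^p$-invariant, which changes neither $0$-nilpotency nor — by $\omega_{F^p}=\omega_F$ (Proposition~\ref{p:ultsim}) — asymptotic $0$-nilpotency; I may also assume $\Sigma$ is a twosided SFT (the onesided case being analogous), and note that $\dinf0\in\omega_F\subseteq\Sigma$. Recalling the equivalences stated for PCA, namely that $F$ is asymptotically $0$-nilpotent iff its central trace $\tau_F$ is weakly $0$-nilpotent, and that $F$ is $0$-nilpotent iff $\tau_F$ is $0$-nilpotent, the theorem amounts to upgrading the weak $0$-nilpotency of $\tau_F$ to a uniform one. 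So I would assume $\tau_F$ is \emph{not} $0$-nilpotent and seek a contradiction.

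Since $\tau_F$ is not $0$-nilpotent, for every $q$ there is a configuration whose central cell is nonzero at some generation $\ge q$. Using finite propagation speed (for radius $r$, the value $F^j(x)_0$ depends only on the central window $x\isub{rj}$) together with transitivity of the SFT and $\dinf0\in\Sigma$, I would re-glue such a window to the $\dinf0$ background to obtain a \emph{$0$-finite} configuration that is still nonzero at that late generation. This is almost a witness for Lemma~\ref{l:chevauchements}; the only missing feature is that the lemma requires the witness to be $0$-\emph{nilpotent}. Hence the whole argument reduces to the following point: in an asymptotically $0$-nilpotent PCA, \emph{every $0$-finite configuration is $0$-nilpotent}.

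This exclusion of ``escaping defects'' is the crux, and I expect it to be the main obstacle. Because $\omega_F=\{\dinf0\}$, every orbit converges to $\dinf0$, so a $0$-finite configuration that is not $0$-nilpotent would have nonempty support receding from every fixed cell while never vanishing. The plan is to \emph{reinject} such a persistent defect: one places translated copies of the configuration, which evolve in lockstep (hence keep a constant mutual offset), spaced widely enough that by Proposition~\ref{p:concat} they evolve independently, and arranged so that the defect of the $k$-th copy crosses the central cell at generations tending to infinity. The resulting single configuration then has its central cell nonzero at arbitrarily large generations, i.e. an orbit not converging to $\dinf0$, contradicting $\omega_F=\{\dinf0\}$. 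The delicate part is controlling the possibly growing width of the defect so that the successive crossings stay uncontaminated by the other copies; this is exactly the kind of overlap control packaged in the ``chevauchements'' proof of Lemma~\ref{l:chevauchements}, which I would reuse here.

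Once it is known that all $0$-finite configurations are $0$-nilpotent, the witnesses extracted in the second paragraph are genuinely $0$-nilpotent and survive arbitrarily long, so Lemma~\ref{l:chevauchements} applies and yields that $F$ is not asymptotically $0$-nilpotent, the desired contradiction. Equivalently, one may phrase the ending positively: the contrapositive of Lemma~\ref{l:chevauchements} provides a uniform generation $j_0$ with $F^{j_0}(x)=\dinf0$ for every $0$-finite $0$-nilpotent — hence every $0$-finite — configuration $x$; since the $0$-finite configurations are dense and $\set x\Sigma{F^{j_0}(x)=\dinf0}$ is closed (continuity of $F^{j_0}$), this forces $F^{j_0}(\Sigma)=\{\dinf0\}$. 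Undoing the passage to the power $F^p$ then shows that the original $F$ is $0$-nilpotent.
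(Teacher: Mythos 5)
Your overall architecture (reduce to the central trace, manufacture $0$-finite witnesses that survive arbitrarily late, and feed them to Lemma~\ref{l:chevauchements}) matches the paper's endgame, and the step producing $0$-finite late-surviving configurations by gluing a window $x\isub{rj}$ into the $\dinf0$ background via transitivity is sound, as is your final density/closedness argument. The genuine gap is exactly where you locate it, and your proposed fix does not close it. You reduce everything to the claim that in an asymptotically $0$-nilpotent PCA every $0$-finite configuration is $0$-nilpotent, and you propose to prove it by placing translated copies of a putative non-nilpotent $0$-finite configuration far apart, so that they ``evolve independently'' and the $k$-th copy's defect visits the central cell at times tending to infinity. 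This cannot be arranged: by the speed limit $r$, a copy whose defect is to sit at cell $0$ at time $n_k$ must have its initial support inside $\cc{-rn_k}{rn_k}$; consequently every previously placed copy lies inside the dependence window of every later observation time $n_{k'}>n_k$. Since those earlier copies do not die (non-nilpotency is precisely the case under consideration), their remnants persist, and Proposition~\ref{p:concat} grants independence only while the separating columns stay zero, i.e.\ only up to a horizon proportional to the spacing --- necessarily shorter than the later observation times. Nor can you ``reuse the overlap control'' of Lemma~\ref{l:chevauchements}: its handoff step ($F^n(y)=F^n(x')$ once the old piece has died) relies precisely on the pieces being $0$-nilpotent, which is what you are trying to prove. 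The argument is circular at its crux.

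What is missing is the paper's key mechanism, which you never invoke: blocking words. Asymptotic nilpotency forbids sensitivity (Proposition~\ref{p:nilbloc}), so Theorem~\ref{t:quasieq} --- this is where transitivity of the SFT really enters --- yields an $l$-blocking word $u$ (with $l=2r$), whose width-$l$ trace is a singleton and hence, by asymptotic nilpotency, eventually equal to $0^l$. The paper sandwiches the late-surviving window between two copies of $u$ and cuts along these eventually-zero blocking columns (Proposition~\ref{p:concat}): the resulting configuration is $0$-finite and its defect is \emph{confined} between two columns that remain zero forever, so that convergence to $\dinf0$ forces it to equal $\dinf0$ after finitely many steps. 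Confinement is how one certifies that the witnesses are simultaneously $0$-finite, $0$-nilpotent and long-surviving; no general claim that all $0$-finite configurations are $0$-nilpotent is needed, and none is proved in the paper. Without this confinement, the witnesses you hand to Lemma~\ref{l:chevauchements} have no reason to be $0$-nilpotent, and the proof does not go through.
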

\begin{proof} 
  Let $F$ be an asymptotically $0$-nilpotent PCA of radius $r\in\Nset$ over some transitive SFT $\Sigma\subset\am$, whose order $l$ can be assumed equal to $2r$ (enlarging $l$ or $r$ if need be). We can assume that $\Mset=\Zset$ without altering the properties of nilpotency and limit nilpotency.
Proposition \ref{p:nilbloc} and Theorem \ref{t:quasieq} give an $l$-blocking word $u$, \ie $T^l_F([u]_{-i})$ is a singleton for some $i\in\Zset$. There exists some generation $k\in\Nset$ such that $\forall n\ge k,\forall x\in[u]_{-i},F^n(x)\in[0^l]_0$. Let $j\in\Nset$. Suppose that $F$ is not nilpotent; it gives a configuration $x\in\az$ such that $F^{j+k}(x)_0\ne0$.  $\Sigma$ being transitive, it contains a configuration $x'=zuv[x\isub{r(j+k)}]v'uz'$, with $z\in\anm,z'\in\an,v,v'\in A^*$. This latter configuration has the property that, if $p_1=-r(j+k)-\length{uv}+i$ and $p_2= r(j+k)+\length{v'}+i$, then: \[\forall n\ge k,F^n(x')\sco{p_1}{p_1+l}=F^n(x')\sco{p_2}{p_2+l}=0^l~.\] $\Sigma$ being an $l$-SFT containing $\dinf0$ (as the limit of all the orbits of $F$), it also contains the configuration $y=\pinf0[F^k(x')\sco{p_1}{p_2+l}]_{p_1}\uinf0$. By construction, $F^j(y)_0=F^{j+k}(x)_0\ne0$. As the concatenation of parts of three configurations sharing the same traces of width $l$ in cells $p_1$ and $p_2$, one can see from Proposition \ref{p:concat} that for any generation $n\in\Nset$, $F^n(y)\soo{-\infty}{p_1+l}=\pinf0$ and $F^n(y)\sco{p_2}{\infty}=\uinf0$. Besides, asymptotic nilpotency gives a generation $n\in\Nset$ for which $F^n(y)\in[0^{p_2-p_1-l}]_{p_1+l}$; it results that $F^n(y)=\dinf0$. The configuration $y$ is $0$-finite, $0$-nilpotent, but dies arbitrarily late (after at least $j$ generations); this contradicts Lemma \ref{l:chevauchements}.
\end{proof}

Note that the juxtaposition of the blocking words is, as in the proof of Theorem \ref{t:quasieq}, the crucial point that prevents a direct generalization of the proof to higher dimensions, that would nevertheless seem natural (CA can be defined similarly on any grid).
\begin{conj}
 Any asymptotically nilpotent $d$-dimensional CA $(A^{\Mset^d},F)$, $d\in\Ns$, is nilpotent.
\end{conj}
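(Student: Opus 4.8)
The plan is to push the skeleton of the one-dimensional proof (Theorem~\ref{t:fnilptr}) as far as it goes and then to replace its geometric heart by a global argument. I would first recast everything at the origin. Writing $S_N=\bigcup_{n\ge N}F^{-n}([A\setminus\{0\}])$, shift-invariance shows that asymptotic $0$-nilpotency says precisely that the decreasing family of \emph{open} sets $S_N$ has empty intersection (no configuration has a live origin at infinitely many times), whereas $0$-nilpotency is the stronger demand that some $S_N$ be \emph{empty} (then $F^N(x)_0=0$ for all $x$, hence $F^N\equiv0$). Were the $S_N$ closed, compactness would finish the proof at once; the entire difficulty is concentrated in their being merely open --- that is, in configurations dying arbitrarily late but accumulating onto one that never dies, exactly the pathology Lemma~\ref{l:chevauchements} rules out in dimension one.

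Next I would isolate what transfers to dimension~$d$ unchanged. Proposition~\ref{p:nilbloc} is stated for arbitrary DDS, so an asymptotically nilpotent $d$-dimensional CA remains non-sensitive. Finite speed of propagation also holds verbatim: if $F$ has radius $r$, then $F^n(x)_0$ depends only on the restriction of $x$ to the box of radius $rn$, so the failure of nilpotency produces, for each $n$, a $0$-finite configuration whose origin is alive at time $n$. These $0$-finite witnesses are exactly the hypotheses of a would-be $d$-dimensional analogue of Lemma~\ref{l:chevauchements}, and such an analogue, if available, would contradict asymptotic nilpotency just as before.

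The hard part is the construction hidden inside that lemma. In dimension one a blocking word, obtained from Theorem~\ref{t:quasieq}, disconnects the line into two independent half-lines; a live seed can then be flanked by two such walls and spliced into a $\dinf0$ background through Proposition~\ref{p:concat} with no interference, and iterating the splicing produces configurations alive arbitrarily late and, in the limit, one alive infinitely often. For $d\ge2$ this collapses: a blocking pattern occupies a bounded region and cannot separate $\Mset^d$, so information leaks around it and a growing live region cannot be insulated from the rest of space. This is the obstruction already flagged after Theorem~\ref{t:fnilptr}, and I expect it to be the crux: the proof stands or falls on replacing the ``wall'' by a genuinely global device.

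For that replacement I would abandon splicing and argue with a shift-invariant measure. Equipping $A^{\Mset^d}$ with the uniform Bernoulli measure $\mu$, asymptotic nilpotency and dominated convergence give that the live density $f_n=\mu(F^{-n}([A\setminus\{0\}]))$ tends to $0$. The aim is then to show that non-nilpotency forbids this decay --- either directly, by bounding $f_n$ below, or, more robustly, by exploiting that under $\mu$ the events attached to far-apart windows are independent, so as to assemble the arbitrarily-late seeds into a single configuration (or a limit measure) along which some tracked cell stays alive at infinitely many times. The delicate point, and the one forcing the argument to use strictly more than the one-dimensional proof, is to control how the spreading live regions interact once they collide: lacking a wall, one must instead lean on the fact that asymptotic nilpotency constrains \emph{every} configuration at once, so that a density or centred-limit contradiction, rather than a single cut-and-paste configuration, must carry the day.
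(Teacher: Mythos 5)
You should first note that the statement you were asked to prove is, in the paper itself, only a \emph{conjecture}: the authors offer no proof, and the sentence immediately preceding it explains why --- the one-dimensional argument of Theorem~\ref{t:fnilptr} hinges on Lemma~\ref{l:chevauchements}, whose splicing construction needs blocking words to disconnect the space, and a bounded pattern cannot disconnect $\Mset^d$ when $d\ge2$. Your first three paragraphs reproduce this analysis faithfully and correctly: the recasting of asymptotic $0$-nilpotency as $\bigcap_N S_N=\emptyset$ versus nilpotency as ``some $S_N$ is empty'' is sound (shift-invariance plus the product topology make both equivalences work), Proposition~\ref{p:nilbloc} indeed transfers verbatim to any DDS, the density of $0$-finite configurations gives the live witnesses in every dimension, and your diagnosis of where the paper's method breaks is exactly the authors' own. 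Up to that point you have, in effect, rediscovered why the question is open, not progress towards closing it.

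The genuine gap is in your final paragraph: the proposed replacement is a plan, not an argument. The half you do establish --- dominated convergence giving $f_n=\mu(F^{-n}([A\setminus\{0\}]))\to0$ under asymptotic nilpotency --- is the easy direction; the claim that ``non-nilpotency forbids this decay'' is precisely the conjecture restated in measure-theoretic language, and nothing in the proposal supplies a lower bound on $f_n$ or a mechanism for assembling the $0$-finite seeds into a configuration that is alive infinitely often. Two concrete obstacles are left untouched: the uniform Bernoulli measure $\mu$ is not $F$-invariant unless $F$ is surjective (and a non-nilpotent but asymptotically nilpotent candidate need not be), so passing to limit measures requires an argument you have not given; and the independence of far-apart windows under $\mu$ is destroyed by iteration, since $F^n$ correlates all cells within distance $rn$, which is exactly the ``collision'' problem you flag and then defer. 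Deferring it is fatal, because controlling how spreading live regions interact without a wall \emph{is} the missing idea --- the point at which, as the paper says, the proof ``stands or falls'' --- and no amount of correct preliminary reduction substitutes for it.
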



In the case of the limit set, there was a clear dichotomy between nilpotency (the limit set is a singleton) and other cases (the limit set is infinite). Here, we cannot achieve such a dichotomy (think about the $\Min$ automaton which has an asymptotic set with only two singletons).  However, it is possible to achieve a similar result looking at finite configurations. Let us introduce variants up to a shift of already introduced notions: an $(F,\sigma)$-periodic configuration is a $F\sigma^k$-periodic configuration for some $k\in\Zset$. A jointly $F$-periodic configuration is an $F$-periodic and $\sigma$-periodic configuration. Let $\Jcal_F$ be the set of jointly $F$-periodic configurations.

If $F$ is a (twosided) PCA of radius $r$ over some subshift $\Sigma\subset\az$, and $0\in A$, we say that a configuration $x$ is $(F,0)$-separated with width $k\in\Ns$, time $J\in\Ns$ and shift $s\in\cc{-rJ}{rJ}$ if $x\sco{-2rJ}0=x\sco{k}{k+2rJ}=0^{2rJ}$ and $F^J(x)\sco{-rJ}{k+rJ}=x\sco{s-rJ}{s+k+rJ}\ne0^{k+2rJ}$.
To state our result on the form of the asymptotic set of non-nilpotent PCA, we previously need to prove the following lemmata: the first one uses the finite type condition to build new configurations inside the asymptotic set from some known specific ones, and the last one ensures that this is applyabe in any non-nilpotent PCA.

\begin{lem}\label{l:separwper}
 Let $F$ be a (twosided) PCA of radius $r>0$ on some SFT $\Sigma\subset\az$ of order $r$, containing some $(F,0)$-separated configuration.
Then $\Sigma$ contains some nonuniform jointly periodic configuration.
\end{lem}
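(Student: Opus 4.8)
The plan is to manufacture, out of the given $(F,0)$-separated configuration $x$, a spatially periodic configuration that reproduces itself up to a shift after $J$ steps, and then to cancel that shift against the spatial period. Writing $u=x\sco0k$ for the core, I would set $P=k+2rJ$ and take $y=\dinf{(u0^{2rJ})}$, the $P$-periodic configuration obtained by padding $u$ with a block of $2rJ$ zeros and repeating. The goal is then to check, in turn, that $y\in\Sigma$, that $y$ is nonuniform, that $F^J(y)=\sigma^s(y)$, and finally that $y$ is both $F$- and $\sigma$-periodic.

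First I would settle membership and nonuniformity, which are the easy parts. Since $\Sigma$ is an SFT of order $r$, it suffices to control the length-$r$ factors of $y$: those lying inside a single copy of $u0^{2rJ}=x\sco0{k+2rJ}$ are factors of $x$, and those straddling a junction between two copies have the form $0^au\sco0{r-a}$, which equals $x\sco{-a}{r-a}$ because the separation hypothesis gives $x\sco{-2rJ}0=0^{2rJ}$; hence every such factor is a factor of $x$ and so $y\in\Sigma$. For nonuniformity, the condition $F^J(x)\sco{-rJ}{k+rJ}=x\sco{s-rJ}{s+k+rJ}\ne0^{k+2rJ}$ exhibits a nonzero cell of $x$ inside $\co{s-rJ}{s+k+rJ}\subseteq\co{-2rJ}{k+2rJ}$; as it cannot lie in the zero blocks $\co{-2rJ}0$ or $\co k{k+2rJ}$, it must lie in $\co0k$, so $u\ne0^k$ and $y\ne\dinf0$.

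The heart of the argument is the reproduction identity $F^J(y)=\sigma^s(y)$. By periodicity and the choice of padding, $y$ coincides with $x$ on the whole window $\co{-2rJ}{k+2rJ}$, the left buffer being furnished by the previous copy's zeros. Since $F$ has radius $r$, for each $i\in\co{-rJ}{k+rJ}$ the $J$-step dependency cone $\cc{i-rJ}{i+rJ}$ stays inside this window, so $F^J(y)_i=F^J(x)_i=x_{i+s}=y_{i+s}$, where the last equality uses $s\in\cc{-rJ}{rJ}$ to keep $i+s$ inside the window. Because $\co{-rJ}{k+rJ}$ is exactly one period of length $P$ and both $F^J(y)$ and $\sigma^s(y)$ are $P$-periodic, agreement on this window forces the global identity $F^J(y)=\sigma^s(y)$.

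Finally, as $F$ commutes with $\sigma$, this identity iterates to $F^{nJ}(y)=\sigma^{ns}(y)$ for every $n$; taking $n=P$ and using $\sigma^P(y)=y$ yields $F^{PJ}(y)=\sigma^{Ps}(y)=y$, so $y$ is $F$-periodic as well as $\sigma$-periodic, hence a nonuniform jointly periodic configuration, as required. I expect the main obstacle to be the interval bookkeeping in the reproduction step: the period must be taken to be exactly $k+2rJ$ so that the reproduction window is a single period, and the drift $s$ has to be pinned down by the light-cone bound $\abs s\le rJ$ so that $i+s$ never escapes the region where $y$ copies $x$. Once these alignments are arranged correctly, the remaining verifications are routine.
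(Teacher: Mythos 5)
Your proof is correct and takes essentially the same route as the paper's: both construct the configuration of period $P=k+2rJ$ agreeing with $x\sco0{k+2rJ}$, check it lies in the order-$r$ SFT because all its short factors occur in $x$ (thanks to the zero buffers), derive the reproduction identity from the light-cone/locality argument, and cancel the drift by iterating $P$ times so that $F^{PJ}(y)=\sigma^{Ps}(y)=y$. The only differences are cosmetic: your sign convention $F^J(y)=\sigma^s(y)$ is the one consistent with the paper's definition of $\sigma$ (the paper writes $F^J\sigma^s(y)=y$, an immaterial sign slip), and you make explicit the nonuniformity check that the paper leaves implicit.
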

\begin{proof}
 Let $x\in\Sigma$ be $(F,0)$-separated with width $k\in\Ns$, time $J\in\Ns$ and shift $s\in\cc{-rJ}{rJ}$.
 Let $y\in\az$ be the configuration of period $k+2rJ$ such that $y\sco0{k+2rJ}=x\sco0{k+2rJ}$.
Thanks to the separation, each pattern of width $2rJ$ appearing in $y$ also appears in $x$:
\[\forall i\in\Nset,y\sco i{i+2rJ}=x\sco{i\bmod(k+2rJ)}{i\bmod(k+2rJ)+2rJ}~.\]
It results that $y$ is in the SFT $\Sigma$ of order $r$.
Moreover, again by catenating patterns of width $2rJ$ from the definition, we can see that $F^J\sigma^s(y)=y$.
In particular, $F^{(k+2rJ)J}\sigma^{(k+2rJ)s}(y)=y$, but we also know by construction that $\sigma^{(k+2rJ)s}(y)=y$, hence $y$ is jointly periodic.
\end{proof}

We say that a subshift is nontrivial if it is not reduced to a single configuration.
\begin{lem}\label{l:separinf}
 Let $F$ be a surjective twosided PCA on some nontrivial subshift $\Sigma\in\az$.
Then $\Sigma$ contains either some $(F,0)$-separated configuration or some $0$-infinite configuration.
\end{lem}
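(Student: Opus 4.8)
The plan is to prove the contrapositive dichotomy: assuming that $\Sigma$ contains no $0$-infinite configuration, I will exhibit an $(F,0)$-separated one. First I would record the reductions forced by surjectivity. As $F$ is onto, $\Omega_F=\Sigma$, and nontriviality gives $\Sigma\neq\{\dinf0\}$, so $F$ is not $0$-nilpotent. Since $F$ commutes with $\sigma$, the image $F(\dinf0)$ is shift-invariant, hence uniform, say $\dinf a$; being a non-$0$-infinite element of $\Sigma$ it must equal $\dinf0$, so $0$ is quiescent and the $0$-finite configurations are preserved by $F$. Under the standing assumption, every configuration of $\Sigma$ has only finitely many nonzero cells.

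Next I would isolate a good orbit. By Bernardi's theorem together with Proposition~\ref{p:recautotr}, surjectivity makes the recurrent configurations dense; as $\Sigma\neq\{\dinf0\}$ this yields a recurrent $x\neq\dinf0$, whose orbit can never reach $\dinf0$ (otherwise it would converge to the fixed point $\dinf0$, contradicting recurrence to $x$). Thus every $F^t(x)$ is nonzero and, by the reduction above, $0$-finite. The key observation is that a nonzero $0$-finite configuration $y$ with $F^J(y)=\sigma^s(y)$ for some $J\in\Ns$ and $\abs s\le rJ$ is automatically $(F,0)$-separated: normalizing its support to $\co0k$, the infinite zero tails provide the flanking blocks $y\sco{-2rJ}0=y\sco k{k+2rJ}=0^{2rJ}$, while $F^J(y)\sco{-rJ}{k+rJ}=(\sigma^s y)\sco{-rJ}{k+rJ}=y\sco{s-rJ}{s+k+rJ}\neq0^{k+2rJ}$. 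So it suffices to find such a self-reproducing chunk. I would apply the pigeonhole principle to the support words of the orbit $(F^t(x))_{t\in\Nset}$, each taken up to a shift: if two of them coincide, say $F^{t_2}(x)=\sigma^s(F^{t_1}(x))$ with $t_1<t_2$, then $y=F^{t_1}(x)$ works, with $J=t_2-t_1$ and $\abs s\le rJ$ since in $J$ steps the support of a $0$-finite configuration moves by at most $rJ$.

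The remaining case is where no two orbit patterns agree up to a shift; then, since there are only finitely many support words of each width, the widths of $F^t(x)$ are unbounded. Centering and shifting along a subsequence realizing this growth, compactness produces a limit configuration in $\Sigma$, and here lies the main obstacle. If the nonzero cells accumulate, the limit has infinitely many of them, i.e.\ is $0$-infinite, contradicting our assumption; if instead a coherent chunk escapes to infinity while its surroundings empty out, the limit is an isolated self-reproducing chunk, hence a separated configuration. The hard part is to rule out the degenerate intermediate behavior in which the nonzero content merely disperses to infinity leaving nothing behind, and to guarantee that one of these two clean alternatives occurs. This is exactly where surjectivity must be used essentially: the density of recurrent configurations and the absence of information loss should force the escaping fragment to be itself recurrent, hence self-reproducing. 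Making this quantitative — most naturally through a balance or pre-injectivity argument exploiting the finite-type structure already used in Lemma~\ref{l:separwper} — is the technical heart of the proof.
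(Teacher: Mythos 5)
The decisive gap is your appeal to Bernardi's theorem. In the paper it is stated for cellular automata on the full shift $\az$, and it is false for PCA on subshifts --- indeed it fails precisely in the situation your contrapositive places you in. Take $A=\deux$, let $\Sigma\subset\deux^\Zset$ be the (closed, strongly $\sigma$-invariant) subshift of configurations having at most one cell in state $1$, and let $F=\sigma\restr\Sigma$. This is a surjective PCA on a nontrivial subshift all of whose configurations are $0$-finite, yet its only recurrent configuration is $\dinf0$: a configuration $x$ whose $1$ sits at cell $p$ has $\sigma^t(x)$ carrying its $1$ at cell $p-t$, so $\sigma^t(x)$ never returns to the cylinder $[x\isub{\abs p+1}]$. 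Hence the recurrent $x\ne\dinf0$ on which your whole construction rests need not exist. (The lemma itself holds in this example through the branch you were trying to reach: the configuration with a single $1$ at the origin is $(F,0)$-separated with width $1$, time $1$ and shift $1$.)

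Even granting a nonzero orbit that never dies, your last paragraph concedes that the unbounded-width case --- nonzero content dispersing to infinity --- is ``the technical heart'' and leaves it unproven, so the proposal is a strategy rather than a proof, and that case is where all the work lies. The paper's proof avoids recurrence entirely. Surjectivity yields a bi-infinite orbit $(x^j)_{j\in\Zset}$ with $x^{j+1}=F(x^j)$, and quiescence of $0$ allows one to trace a nonzero cell \emph{backwards} in time at speed at most $r$, giving marks $s_j$ with $x^j_{s_j}\ne0$ and $\abs{s_j-s_{j-1}}\le r$ (forward tracing is impossible, since finite configurations may die --- the very reason your forward-orbit argument stalls). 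Recentring each $x^j$ at its mark, the paper then runs your pigeonhole idea at all scales simultaneously: with $t_{k+1}=(\card A^{2l_k}+1)t_k$ and $l_{k+1}=l_k+4rt_{k+1}$, it picks $\card A^{2l_k}+1$ recentred orbit points in a time window of length $t_{k+1}$; either one of them has a nonzero cell in the flanking annulus at distance $l_k$ from the mark (i.e.\ lies in the clopen set $U_k$), or all these flanks of width $2rt_{k+1}$ vanish, and then two of the points share the same central word of width $2l_k$, which produces an $(F,0)$-separated configuration. If separation never occurs, the recentred orbit meets $\bigcap_{l<k}U_l$ for every $k$, and compactness of this decreasing intersection of clopen sets yields a configuration with nonzero cells at arbitrarily large distances, i.e.\ a $0$-infinite one. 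Your pigeonhole on support words is exactly the single-scale instance of this scheme; the nested scales, applied to a backward orbit rather than a forward one, are the missing quantitative ingredient you point to.
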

\begin{proof}
If $0$ is not quiescent, then $\Sigma$ contains some uniform configuration distinct from $\dinf0$.
Otherwise, by surjectivity and nontriviality, there exist $(x^j)_{j\in\Zset}\in\Sigma^\Zset$ and $(s_j)_{j\in\Zset}$ with for any $j\in\Zset$, $\abs{s_j-s_{j-1}}\le r$, $x^{j+1}=F(x^j)$, and $\sigma^{s_j}(x^j)_0\ne0$.
Let $t_1=1$, $l_1=0$ and, for $k\in\Ns$, $t_{k+1}= (\card A^{2l_k}+1)t_k$ and $l_{k+1}={l_k+4rt_{k+1}}$.
For $k\in\Ns$, define $U_k=\set y\Sigma{y\sco{-l_k-2rt_{k+1}}{-l_k}\ne0^{2rt_{k+1}}\textrm{ or } y\sco{l_k}{l_k+2rt_{k+1}}\ne0^{2rt_{k+1}}}$.
Let us prove by recurrence on $k\in\Ns$ that $\forall J\in\Zset,\exists j\in\oc{J-t_k}J,\sigma^{s_j}(x^j)\in\bigcap_{1\le l<k}U_l$.

The case $k=1$ is trivial (no intersection).

Assume that $k\in\Ns$ is such that $\forall J\in\Zset,\exists j\in\oc{J-t_k}J,\sigma^{s_j}(x^j)\in\bigcap_{l<k}U_l$.
Applying this property ${\card A^{2l_k}+1}$ times, for any $J\in\Zset$, we can find some distinct $(j_m)_{0\le m\le{\card A^{2l_k}}}$ in $\oc{J-t_{k+1}}J$ such that $\sigma^{s_{j_m}}(x^{j_m})\in\bigcap_{l<k}U_l$ for any $m\in\cc0{\card A^{2l_k}}$.
Suppose that for any $m\in\cc0{\card A^{2l_k}}$, $\sigma^{s_{j_m}}(x^{j_m})\notin U_k$, \ie $\sigma^{s_{j_m}}(x^{j_m})\sco{-l_k-2rt_{k+1}}{-l_k}=\sigma^{s_{j_m}}(x^{j_m})\sco{l_k}{l_k+2rt_{k+1}}=0^{2rt_{k+1}}$.
By the pigeon-hole principle, there are some $m,m'\in\cc0{\card A^{2l_k}}$ with $m<m'$ such that $\sigma^{s_{j_m}}(x^{j_m})\sco{-l_k}{l_k}=\sigma^{s_{j_{m'}}}(x^{j_{m'}})\sco{-l_k}{l_k}$.
Hence $\sigma^{s_{j_m}-l_k}(x^{j_m})$ is $(F,0)$-separated with width $2l_k\in\Ns$, time $j_{m'}-j_m\le J$ and shift $s_{j_{m'}}-s_{j_m}$.

We have just proved that if there are no $(F,0)$-separated configurations, then for any $k\in\Ns$, $\forall J\in\Zset,\exists j\in\oc{J-t_k}J,\sigma^{s_j}(x^j)\in\bigcap_{l<k}U_l$.

By compactness, the closed intersection $\bigcap_{l\in\Nset}U_l$ is nonempty. By definition it contains some $0$-infinite configuration.
\end{proof}

\begin{prop}
  Let $F$ be a (twosided) PCA over some SFT $\Sigma\subset\az$ and $\Lambda\supset\Jcal_F$ a nontrivial strongly $F$-invariant subshift of $\Sigma$.
  Then $\Lambda$ contains some $0$-infinite configuration. 
\end{prop}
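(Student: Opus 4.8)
The plan is to read the conclusion off the two preceding lemmas, with Lemma \ref{l:separinf} providing a dichotomy and Lemma \ref{l:separwper} resolving the harder branch. First I would normalize the parameters: since an SFT of order $k$ is again an SFT of any order $k'\ge k$, and since artificially enlarging the radius of a PCA leaves its global dynamics untouched, I may assume once and for all that the radius satisfies $r>0$ and that $\Sigma$ is an SFT of order $r$. This is precisely the setup under which both lemmas speak of the \emph{same} radius $r$, so the $(F,0)$-separation notion produced by one lemma is exactly the one consumed by the other.

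Next I would note that, because $\Lambda$ is strongly $F$-invariant, the restriction $F\restr\Lambda$ is a surjective twosided PCA on the nontrivial subshift $\Lambda\subset\az$; moreover $F$ is a single global map, so iterating it inside $\Lambda$ gives the same configurations as iterating it in $\Sigma$, and $(F,0)$-separation may be read interchangeably in $\Lambda$ or in the ambient SFT. Applying Lemma \ref{l:separinf} to $F\restr\Lambda$ then yields the alternative: either $\Lambda$ already contains a $0$-infinite configuration, and we are done, or $\Lambda$ contains some $(F,0)$-separated configuration $x$.

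In the remaining case, $x$ lies in $\Lambda\subseteq\Sigma$, so $\Sigma$ is an SFT of order $r$ containing an $(F,0)$-separated configuration, and Lemma \ref{l:separwper} hands us a nonuniform jointly periodic configuration $y\in\Sigma$. By definition $y\in\Jcal_F$, and the hypothesis $\Jcal_F\subseteq\Lambda$ puts $y$ back inside $\Lambda$. It remains only to observe that a $\sigma$-periodic configuration that is not uniform must carry some nonzero symbol, which by spatial periodicity recurs infinitely often; hence $y$ is $0$-infinite and belongs to $\Lambda$, which is what we wanted.

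I expect no genuine mathematical obstacle here, since the two lemmas do all the work; the only point requiring care is the bookkeeping of the radius, namely ensuring that the $r$ used by Lemma \ref{l:separinf} to manufacture the separated configuration is the very $r$ under which Lemma \ref{l:separwper} is then invoked. This is why the normalization ($r>0$, $\Sigma$ of order $r$) is carried out before either lemma is applied, and why the separated configuration is transported unchanged from $\Lambda$ into $\Sigma$.
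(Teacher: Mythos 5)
Your proof is correct and follows essentially the same route as the paper's: apply Lemma \ref{l:separinf} to the surjective system $(\Lambda,F)$, then in the separated case invoke Lemma \ref{l:separwper} on the ambient SFT and use $\Jcal_F\subseteq\Lambda$ to place the nonuniform jointly periodic (hence $0$-infinite) configuration back in $\Lambda$. Your explicit bookkeeping of the radius $r$ and the SFT order only makes precise a normalization the paper leaves implicit.
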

\begin{proof}
  $(\Lambda,F)$ is surjective, hence Lemma \ref{l:separinf} gives either some $0$-infinite configuration or some $(F,0)$-separated configuration. But if $(\Sigma,F)$ admits an $(F,0)$-separated configuration, then Lemma \ref{l:separwper} gives a nonuniform jointly periodic configuration, which is clearly infinite, and belongs in $\Lambda$ by hypothesis.
  \end{proof}
This proposition can be applied in particular to the closure of $\omega_F$, which satisfies all the hypotheses, as previously stated.

\begin{cor}
 A (twosided) CA $F$ over $\az$ is $0$-nilpotent if and only if $\cl{\omega_F}$\resp{$\Omega_F$} contains only $0$-finite configurations.
\end{cor}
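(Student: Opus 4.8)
The plan is to prove the two biconditionals (the one for $\cl{\omega_F}$ and the one for $\Omega_F$) simultaneously, exploiting the inclusion $\cl{\omega_F}\subseteq\Omega_F$. This inclusion holds because $\omega_F\subseteq\Omega_F$ and $\Omega_F$ is closed, so $\cl{\omega_F}\subseteq\cl{\Omega_F}=\Omega_F$. The key observation is that ``contains only $0$-finite configurations'' is literally the negation of ``contains some $0$-infinite configuration'', so the harder direction reduces to a single statement about non-$0$-nilpotent CA.

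For the direct implication I would simply note that $0$-nilpotency forces $\Omega_F=\bigcap_{j\in\Nset}F^j(\az)=\{\dinf0\}$, whence $\omega_F=\cl{\omega_F}=\{\dinf0\}$ as well (using $\emptyset\ne\omega_F\subseteq\Omega_F$); since $\dinf0$ is $0$-finite, both sets trivially contain only $0$-finite configurations. For the converse I would argue by contraposition and establish the single assertion: if $F$ is \emph{not} $0$-nilpotent, then $\cl{\omega_F}$ contains a $0$-infinite configuration. Because $\cl{\omega_F}\subseteq\Omega_F$, such a configuration lies in $\Omega_F$ too, so this one claim settles both converse directions at once.

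The heart of the argument is to apply the preceding Proposition to $\Lambda=\cl{\omega_F}$ inside the SFT $\Sigma=\az$. Its hypotheses hold as recorded just after the Proposition: $\cl{\omega_F}$ is a subshift, it is strongly $F$-invariant, and it contains $\Jcal_F$ (every periodic point lies in $\omega_F$, hence every jointly periodic configuration does). The only remaining requirement, and the step needing genuine care, is the nontriviality of $\Lambda$. I would therefore split into two cases. If $\cl{\omega_F}$ is nontrivial, the Proposition immediately produces a $0$-infinite configuration in it, and we are done. If instead $\cl{\omega_F}=\{z\}$ is a single configuration, then $\omega_F=\{z\}$, so $F$ is asymptotically $z$-nilpotent and $z=\dinf a$ is uniform (as already noted for the asymptotic set of a CA); if $a\ne0$ then $z$ is itself $0$-infinite and we are done, while if $a=0$ then Theorem~\ref{t:fnilptr}, applied on the transitive SFT $\az$, makes $F$ $0$-nilpotent, contradicting our standing assumption.

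The main obstacle is precisely this disposal of the degenerate case: the Proposition cannot be invoked when $\cl{\omega_F}$ is reduced to a point, and it is Theorem~\ref{t:fnilptr} that eliminates the only problematic subcase $z=\dinf0$. Everything else is routine bookkeeping: verifying $\cl{\omega_F}\subseteq\Omega_F$, checking that $\cl{\omega_F}$ meets the hypotheses of the Proposition (enlarging the radius to make it positive if need be), and translating ``only $0$-finite'' into ``no $0$-infinite''.
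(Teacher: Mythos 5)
Your proof is correct and takes essentially the route the paper intends: the forward direction is immediate from $\Omega_F=\{\dinf0\}$, and the converse is the preceding Proposition applied to $\Lambda=\cl{\omega_F}$ inside the SFT $\az$, with the inclusion $\cl{\omega_F}\subseteq\Omega_F$ collapsing the two claimed equivalences into one. What you add — and what the paper glosses over with its one-line remark that $\cl{\omega_F}$ ``satisfies all the hypotheses'' — is the nontriviality requirement, which is genuinely not automatic: for an asymptotically $\dinf a$-nilpotent CA with $a\ne0$ (e.g.\ the constant CA onto $\dinf1$), the set $\cl{\omega_F}$ is a singleton, the Proposition does not apply, and the corollary only survives because $\dinf a$ is itself $0$-infinite. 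Your case split for the degenerate situation — uniformity of the unique asymptotic configuration, then either $a\ne0$ (done directly) or $a=0$, where Theorem~\ref{t:fnilptr} on the transitive SFT $\az$ yields $0$-nilpotency and a contradiction — is exactly the bookkeeping needed to make the paper's implicit argument airtight, so your write-up is, if anything, more complete than the paper's own justification.
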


Unlike the limit set, the asymptotic set of CA is very sensitive to shift compositions. Actually, shifting sufficiently a CA allows any limit configuration to become the adhering values of some orbit. We say that a DDS $(X,F)$ is \dfn{semitransitive towards $Y\subset X$} if for any nonempty open set $U\subset X$, any ball $\ball\varepsilon x$ of center $x\in Y$ and radius $\varepsilon>0$, and any generation $J\in\Nset$, there exists $j\ge J$ such that $F^j(U)\cap\ball\varepsilon x\ne\emptyset$. We can now generalize Proposition \ref{p:ulttrans} with this notion.
\begin{lem}\label{l:ultstrans}
 Let $(X,F)$ some DDS. The asymptotic set $\omega_F$ includes all the subsets $Y\subset X$ such that there is a subsystem $(X',F)$ which is semitransitive towards $Y$.
\end{lem}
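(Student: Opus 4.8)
The plan is to fix an arbitrary target point $x\in Y$ and to build, by a nested-compactness (diagonal) argument, a \emph{single} point $z\in X'$ whose orbit comes arbitrarily close to $x$ at arbitrarily late times; this exhibits $x$ as an adhering value of $\orb_F(z)$, hence places it in $\omega_F$. This is the natural generalization of the proof of Proposition~\ref{p:ulttrans}: instead of relying on one transitive point, I assemble the required point by successive approximations furnished by semitransitivity. (If $Y=\emptyset$ the claim is trivial, so I assume $Y$, and hence the subsystem $X'$, nonempty.)

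Concretely, I would construct inductively a decreasing family of nonempty open subsets $U_0\supseteq\overline{U_1}\supseteq U_1\supseteq\overline{U_2}\supseteq\cdots$ of $X'$ together with a strictly increasing sequence of generations $(j_n)_{n\in\Ns}$ satisfying $F^{j_n}(U_n)\subseteq\ball{1/n}{x}$. Starting from $U_0=X'$, at step $n$ I would apply the semitransitivity of $(X',F)$ towards $Y$ with the nonempty open set $U_{n-1}$, the ball $\ball{1/n}{x}$ (legitimate since $x\in Y$), and a generation threshold chosen strictly larger than $j_{n-1}$. This yields a generation $j_n>j_{n-1}$ and a point $u_n\in U_{n-1}$ with $F^{j_n}(u_n)\in\ball{1/n}{x}$. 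Continuity of the iterate $F^{j_n}$ makes $(F^{j_n})^{-1}(\ball{1/n}{x})\cap U_{n-1}$ an open neighborhood of $u_n$, and since $X'$ is a compact (hence regular) metric space I can shrink it to a nonempty open set $U_n$ with $\overline{U_n}\subseteq U_{n-1}$ and $F^{j_n}(U_n)\subseteq\ball{1/n}{x}$.

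The sequence $(\overline{U_n})_{n\in\Ns}$ is then a decreasing family of nonempty closed subsets of the compact space $X'$, so by the finite-intersection property there exists a point $z\in\bigcap_{n\in\Ns}\overline{U_n}$. Since $\overline{U_{n+1}}\subseteq U_n$, we have $z\in U_n$ for every $n$, whence $\dist(F^{j_n}(z),x)<1/n$. As $j_n\to\infty$, this shows that $x$ is an adhering value of the orbit of $z$, i.e.\ $x\in\Omega_F(\{z\})$; because $z\in X'\subseteq X$, this gives $x\in\omega_F$. Since $x\in Y$ was arbitrary, $Y\subseteq\omega_F$, as claimed.

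The only delicate point is the bookkeeping of the nested construction: forcing the generations $j_n$ to increase (which is exactly what the ``$\exists j\ge J$'' clause in the definition of semitransitivity provides) and nesting the open sets with closures inside the previous set, so that compactness yields a common point whose orbit realizes $x$ as an adhering value. Everything else is routine use of continuity and compactness, and I expect no genuine obstacle.
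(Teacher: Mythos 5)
Your proof is correct and follows essentially the same route as the paper's: inductively apply semitransitivity to nest nonempty open sets $U_n\subset X'$ with generations $j_n\to\infty$ forcing $F^{j_n}(U_n)$ into $\ball{1/n}{x}$, then extract a common point by compactness, whose orbit admits $x$ as adhering value. Your closure-nesting $\overline{U_n}\subseteq U_{n-1}$ is in fact slightly more careful than the paper's version, which invokes compactness directly on a decreasing intersection of \emph{open} sets (a step that, as literally written, needs exactly the kind of closed-set patch you supply).
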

\begin{proof}
 Let $x$ a point of such a subset $Y$, and $U_0=X'$. By induction, semitransitivity gives us sequences $(j_k)_{k\in\Ns}$ of integers and $(U_k)_{k\in\Ns}$ of open sets of $X'$ such that $j_k>k$, $F^{j_k}(U_k)\cap\ball{1/k}x\ne\emptyset$ and $U_{k+1}=U_k\cap F^{-j_k}(\ball{1/k}x)$. By compactness, the intersection $\bigcap_{k\in\Nset}U_k$ is nonempty, and any of its elements admits $x$ as orbit adhering value.
\end{proof}

\begin{prop}\label{p:omegomeg}
Let $F$ be some \ssl{oblic} CA. Then $\omega_F=\Omega_F$.
\end{prop}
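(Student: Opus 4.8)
The plan is to establish the nontrivial inclusion $\Omega_F\subseteq\omega_F$, since the reverse inclusion $\omega_F\subseteq\Omega_F$ holds for every DDS and was already recorded. I would obtain it from Lemma~\ref{l:ultstrans}: it is enough to produce a single subsystem $(X',F)$ that is semitransitive towards $Y=\Omega_F$. The most economical choice is the whole space $X'=\az$ itself, which is closed and $F$-invariant, hence a subsystem. Its advantage is that the open sets $U$ appearing in the semitransitivity condition are then honest cylinders of $\az$, so I am free to prescribe a witness configuration on finitely many cells and leave it arbitrary elsewhere, without ever having to check membership in the limit set.

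The engine of the argument is the one-directional propagation of an oblic CA. Up to the left--right reflection I would assume the anchor satisfies $m\ge d$, so that $F(x)_i=f(x\sco{i-m}{i-m+d})$ reads only cells $\le i-1$. A one-line induction then gives that $F^j(x)_i$ depends on $x$ only through the window $\cc{i-jm}{i-j(m-d+1)}$, whose rightmost cell $i-j(m-d+1)$ tends to $-\infty$ because $m-d+1\ge1$. Hence, for a fixed width $k$, the finite set of cells determining the central pattern $F^j(y)\isub k$ slides off to $-\infty$ and, in particular, eventually avoids any prescribed finite region.

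Granting this, semitransitivity of $(\az,F)$ towards $\Omega_F$ should follow directly. Given a nonempty open $U\subseteq\az$, a center $x\in\Omega_F$, a radius $\varepsilon$ and a threshold $J$, I would fix $k$ with $2^{-k}\le\varepsilon$ and a cylinder $[u]_a\subseteq U$. As $x\in\Omega_F\subseteq F^j(\az)$ for every $j$, I may pick $w$ with $F^j(w)=x$. I then take $j\ge J$ so large that the window $\cc{-k-jm}{k-j(m-d+1)}$ (which contains all cells influencing $F^j(y)\isub k$) is disjoint from the support of $[u]_a$, and define $y$ to equal $u$ on that support, $w$ on the window, and anything elsewhere. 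Then $y\in U$ while $F^j(y)\isub k=F^j(w)\isub k=x\isub k$, so $F^j(y)\in\ball\varepsilon x$; thus $F^j(U)\cap\ball\varepsilon x\neq\emptyset$ and Lemma~\ref{l:ultstrans} yields $\Omega_F\subseteq\omega_F$.

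The step I expect to carry the whole weight is the escape of the dependency window, because it is exactly where obliqueness is indispensable. For a merely oneway CA such as $\Min$ (anchor $m=0$) the window $\cc{i}{i+j(d-1)}$ always contains cell $i$, so it never detaches from the center and the two prescriptions — lying in $U$ and reproducing $x$ — would clash; this is consistent with $\omega_{\Min}\neq\Omega_{\Min}$. The strict inequalities $m<0$ or $m\ge d$ in the definition of obliqueness are precisely what forces the light cone to leave every bounded region, and I would stress that taking $X'=\az$ rather than $\Omega_F$ is what lets me plant the far-away preimage pattern $w$ with no gluing concern about staying in the limit set.
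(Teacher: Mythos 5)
Your proposal is correct and follows essentially the same route as the paper: both invoke Lemma~\ref{l:ultstrans} with the full space $\az$ as the subsystem, and establish semitransitivity towards $\Omega_F$ by exploiting the strictly one-sided dependency window of an oblic CA, which escapes every bounded region and so lets one glue a preimage pattern of $x\in\Omega_F$ disjointly from the cylinder defining $U$. The only differences are cosmetic: the paper treats the anchor case $m<0$ (window drifting right) and concatenates $u$ with the preimage contiguously, whereas you treat $m\ge d$ by reflection and allow an arbitrary filler between the two prescribed regions.
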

\begin{proof}
By Lemma \ref{l:ultstrans}, it is enough to show that $(\az,F)$ is semitransitive towards $\Omega_F$.
Let $k,l\in\Nset$, $u\in A\iexp k$, and $x$ a configuration of the limit set, \ie for any generation $j\in\Nset$, there exists a configuration $x^j$ such that $F^j(x^j)=x$.
Assume $F$ has an anchor $m<0$, a diameter $d\in\Nset$, a local rule $f$, and that $k+1+l=jm$ for some $j\in\Nset$ (otherwise enlarge one of them).
It is then easy to see that any configuration $y\in[ux^j\soc k{l+(d-1-m)j}]_{-k}$ is in the open set $U=[u]$ and satisfies $F^j(y)\in[x\scc{-l}l]=\ball{2^{-l}}x$.

\end{proof}
Any local rule can be seen as that of an oblic CA; this brings the following restatement.
\begin{cor}\label{c:omegomegs}
If $F$ is a CA of anchor $m\in\Zset$ and anticipation $m'\in\Nset$, then $\Omega_F=\omega_{\sigma^kF}$ for any $k>m$ and any $k<-m'$.
\end{cor}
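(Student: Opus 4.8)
The plan is to realize $\sigma^kF$ as an oblic CA and then to invoke Proposition~\ref{p:omegomeg}, after observing that shift composition leaves the limit set unchanged.

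First I would track the effect of a shift on the anchor. If $F$ is given by anchor $m$, diameter $d$ and local rule $f$, so that $F(x)_i=f(x\sco{i-m}{i-m+d})$, then $\sigma^kF(x)_i=F(x)_{i+k}=f(x\sco{i+k-m}{i+k-m+d})$, which is precisely the global rule of the CA of diameter $d$, local rule $f$ and anchor $m-k$. With the anticipation $m'=d-1-m$ (so that $d=m+m'+1$), the new anchor $m-k$ is negative exactly when $k>m$, and satisfies $m-k\ge d$ exactly when $k\le m-d=-(m'+1)$, that is when $k<-m'$. In either of these two cases $\sigma^kF$ is oblic, so Proposition~\ref{p:omegomeg} gives $\omega_{\sigma^kF}=\Omega_{\sigma^kF}$.

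It then remains to check that $\Omega_{\sigma^kF}=\Omega_F$. Since $F$ commutes with $\sigma$, so does each iterate, and $(\sigma^kF)^j=\sigma^{kj}F^j$ for every $j\in\Nset$. As $\sigma$ is a bijection of $\az$, we get $(\sigma^kF)^j(\az)=\sigma^{kj}(F^j(\az))=F^j(\sigma^{kj}(\az))=F^j(\az)$. Intersecting over $j$ then yields $\Omega_{\sigma^kF}=\bigcap_{j\in\Nset}(\sigma^kF)^j(\az)=\bigcap_{j\in\Nset}F^j(\az)=\Omega_F$. Combining the two steps gives $\omega_{\sigma^kF}=\Omega_{\sigma^kF}=\Omega_F$, as claimed.

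The manipulation of the anchor is mere bookkeeping; the only point deserving attention is the identity $(\sigma^kF)^j(\az)=F^j(\az)$, which expresses that the successive image subshifts are strongly shift-invariant and is exactly what makes the limit set insensitive to shift composition, in sharp contrast with the asymptotic set.
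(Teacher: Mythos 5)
Your proposal is correct and is exactly the argument the paper intends: the corollary is presented as a restatement of Proposition~\ref{p:omegomeg} via the remark that ``any local rule can be seen as that of an oblic CA,'' which is precisely your computation that $\sigma^kF$ has anchor $m-k$ (oblic when $k>m$ or $k<-m'$), combined with the shift-invariance of the limit set. Your write-up merely makes explicit the two bookkeeping steps (the anchor arithmetic and $\Omega_{\sigma^kF}=\Omega_F$) that the paper leaves implicit.
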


The previous statements allow smart constructions of some counter-examples.
\begin{exmp}[Asymptotic set strictly including the union of the transitive subsystems]\label{x:minpenche}
The CA $\sigma\Min$ has anchor $-1$, diameter $2$ and the same local rule as the $\Min$ CA, \ie:\[\appl f{\deux^2}\deux{(a,b)}{a\times b~.}\]
As an oblic CA, its asymptotic set is equal to its limit set: $\omega_{\sigma\Min}=\Omega_{\sigma\Min}=(\pinf0+\pinf1)1^*(\uinf0+\uinf1)$. It includes strictly the asymptotic set of the limit system: $\omega_{\sigma\Min\restr\Omega}=\omega_{\sigma\Min\restr\omega}=\omega_{\Min}=\{\dinf0,\dinf1\}$, which is also the union of the transitive subsystems.
\end{exmp}

This example brings the following questions: is $\omega_{F\restr\omega}$ always the union of the transitive subsystems? How can we understand the fact that these two sets only differ by isolated configurations (the Cantor-Bendixon derivative)? When we look at the action of the shift over the asymptotic set, do we always have, as in the $\Min$ case, a \xpr{minimum} asymptotic set, which corresponds to the asymptotic set of the limit system of all the shifted versions? 

\section{Conclusion}

We studied discrete-time dynamical systems with respect to their behavior in the (very) long term.
After several general remarks on the topological properties of their limit set and asymptotic set, we focused on particular systems: subshifts (especially sofic subshifts) and partial cellular automata (especially cellular automata). In these two cases, the limit set and the asymptotic set have a very specific structure. The homogeneity of the models makes the properties over the limit behavior to constrain the possible transient evolution.


The diagram below summarizes the main implications we proved (note that $\Omega$-nilpotent, $\omega$-nilpotent, $\Omega$-periodic and $\omega$-periodic stand respectively for limit-nilpotent, asymptotically nilpotent, limit-periodic and asymptotically periodic).
\begin{center}
%
%

\begin{tikzpicture}[thick=2,scale=1.1]
\node (nsens) at (0,0) {nonsensitive};
\node (qequi) at (5,2) {$\exists$ equicontinuous point};
\node (pequi) at (5,4) {almost equicontinuous};
\node (equi) at (6,5.5) {equicontiuous};
\node (aper) at (0,4) {$\omega$-periodic};
\node (anilp) at (-4,6) {$\omega$-nilpotent};
\node (fpper) at (0,5.5) {weakly $p$-preperiodic};
\node (lper) at (3,5.5) {$\Omega$-periodic};
\node (fnilp) at (-4,8) {weakly nilpotent};
\node (lnilp) at (-1,8) {$\Omega$-nilpotent};
\node (pper) at (3,7.5) {preperiodic};
\node (nilp) at (-1,10) {nilpotent};

\draw[eqCA] (anilp.south east) -- (lnilp.south east) -- (lnilp.north east) -- (nilp.north east) -- (nilp.north west) -- (fnilp.north west) -- (fnilp.south west) -- (anilp.south west) --cycle;
\draw[eqCA] (equi.south east) -- (equi.north east) -- (pper.north east) -- (pper.north west) -- (fpper.north west) -- (fpper.south west) --cycle;
\draw[eqCA] (nsens.south east) -- (qequi.south east) -- (pequi.south east) -- (pequi.north east) -- (pequi.north west) -- (nsens.north west) -- (nsens.south west) --cycle;
\draw[eqCA] (aper.south east) -- (aper.north east) -- (aper.north west) -- (aper.south west) --cycle;

\node (nsens) at (0,0) {nonsensitive};
\node (qequi) at (5,2) {$\exists$ equicontinuous point};
\node (pequi) at (5,4) {almost equicontinuous};
\node (equi) at (6,5.5) {equicontiuous};
\node (aper) at (0,4) {$\omega$-periodic};
\node (anilp) at (-4,6) {$\omega$-nilpotent};
\node (fpper) at (0,5.5) {weakly $p$-preperiodic};
\node (lper) at (3,5.5) {$\Omega$-periodic};
\node (fnilp) at (-4,8) {weakly nilpotent};
\node (lnilp) at (-1,8) {$\Omega$-nilpotent};
\node (pper) at (3,7.5) {preperiodic};
\node (nilp) at (-1,10) {nilpotent};

\draw[DDS] (nilp) -- (lnilp);
\draw[DDS] (nilp) -- (pper);
\draw[DDS] (nilp) -- (fnilp);
\draw[DDS] (fnilp) -- (anilp);
\draw[DDS] (fnilp) -- (fpper);
\draw[DDS] (lnilp) -- (anilp);
\draw[DDS] (lnilp) -- (lper);
\draw[DDS] (lnilp) -- (equi);
\draw[DDS] (pper) -- (fpper);
\draw[DDS] (pper) -- (lper);
\draw[DDS] (pper) -- (equi);
\draw[DDS] (anilp) -- (nsens);
\draw[DDS] (anilp) -- (aper);
\draw[DDS] (fpper) -- (aper);
\draw[DDS] (lper) -- (aper);
\draw[DDS] (equi) -- (pequi);
\draw[DDS] (pequi) -- (qequi);
\draw[DDS] (qequi) -- (nsens);

\path (aper) edge[TDDS] (nsens);
\path (equi) edge[PCA,bend right=22,looseness=.8] (pper);
\path (lper) edge[PCA,bend right=22,looseness=.8] (pper);
\path (fpper) edge[PCAt,bend left=22,looseness=.8] (pper);
\path (anilp) edge[PCAt] (nilp);
\path (lnilp) edge[PCA,bend right=22,looseness=.8] (nilp);
\path (nsens) edge[PCAt,bend left=22,looseness=.8] (pequi);
\path (nsens) edge[oSS,bend left=22,looseness=.8] (qequi);
\path (anilp) edge[soSS,bend left=22,looseness=.8] (fnilp);
\path (aper) edge[soSS,bend left=22,looseness=.8] (fpper);


\end{tikzpicture}
%

\end{center}
\textbf{Keys}
\begin{tabular}{ll}
&implications true for\ldots\\
 \tikz{\draw[DDS] (0,0) -- (.5,0);} &DDS in general\\
 \tikz{\draw[TDDS] (0,0) -- (.5,0);} &TDDS\\
 \tikz{\draw[PCA] (0,0) -- (.5,0);} &PCA\\
 \tikz{\draw[PCAt] (0,0) -- (.5,0);} &PCA over transitive SFTs\\
 \tikz{\draw[oSS] (0,0) -- (.5,0);} &onesided subshifts\\
 \tikz{\draw[soSS] (0,0) -- (.5,0);} &sofic onesided subshifts\\
 \tikz{\draw[eqCA] (0,-.1) rectangle +(.5,.2);}&equivalences for CA
\end{tabular}

\ack
This work was partially supported by the ANR project \textit{EMC} and the Academy of Finland project 131558.
We would also like to thank Guillaume Theyssier, Mathieu Sablik, François Blanchard, for very interesting discussions about the asymptotic set.

\section*{References}
\bibliographystyle{unsrt}
\bibliography{biblio_ult}

\end{document}